\newtheorem{theorem}{Theorem}
\newtheorem{lemma}[theorem]{Lemma}
\newtheorem{corollary}[theorem]{Corollary}
\newtheorem{remark}[theorem]{Remark}
\newcommand{\N}{\mathcal{N}}
\renewcommand{\part}[2]{\frac{\partial #1}{\partial #2}}
\definecolor{purple}{rgb}{0.54, 0.17, 0.89}
\newcommand{\yongxin}[1]{{\textcolor{blue}{#1}}}
\title{Improved analysis for a proximal algorithm for sampling}
\author{Yongxin Chen\footnote{School of Aerospace Engineering, Georgia Institute of Technology. Email: \texttt{yongchen@gatech.edu}}
\and Sinho Chewi\footnote{
Department of Mathematics, 
Massachusetts Institute of Technology.
Email: \texttt{schewi@mit.edu}
} 
\and Adil Salim\footnote{
Microsoft Research.
Email: \texttt{salim@berkeley.edu}
} 
\and Andre Wibisono\footnote{
Department of Computer Science, Yale University.
Email: \texttt{andre.wibisono@yale.edu}
}
}
\date{\today}
\begin{document}

\maketitle

\begin{abstract}
    We study the proximal sampler of~\citet{LST21} and obtain new convergence guarantees under weaker assumptions than strong log-concavity: namely, our results hold for (1) weakly log-concave targets, and (2) targets satisfying isoperimetric assumptions which allow for non-log-concavity. We demonstrate our results by obtaining new state-of-the-art sampling guarantees for several classes of target distributions. We also strengthen the connection between the proximal sampler and the proximal method in optimization by interpreting the proximal sampler as an entropically regularized Wasserstein proximal method, and the proximal point method as the limit of the proximal sampler with vanishing noise.
\end{abstract}

 \tableofcontents


\section{Introduction}

The problem of sampling from a target density $\pi^X\propto\exp(-f)$ on $\R^d$ has seen a resurgence of interest due to its staple role in scientific computing~\citep{robertcasella2004mcmc}, as well as its surprising and deep connections with the field of optimization. Indeed, the standard Langevin algorithm can be viewed as a gradient flow of the Kullback-Leibler (KL) divergence on the space of probability measures equipped with the geometry of optimal transport, a perspective which has led to new analyses~\citep{durmusmajewski2019lmcconvex, salimrichtarik2020proximallangevin, ahnchewi2021mirrorlangevin} and algorithms~\citep{pereyra2016proximal, zhangetal2020mirror, dingli2021coordinatelmc, maetal2021nesterovmcmc} inspired by the theory of convex optimization.

Among the algorithms in the optimization toolkit, we focus on \emph{proximal} methods. Classically, proximal methods are used to minimize composite objectives of the form $f + g$, where $g$ is smooth and convex and $f$ is non-smooth but simple enough to allow for evaluation of the proximal map $\prox_f: y\mapsto \argmin_{x\in\R^d}\{f(x) + \frac{1}{2\eta} \, \norm{x-y}^2\}$. However, the setting of our investigation is more closely related to the minimization of a non-composite objective $f$, for which the proximal method is known as the \emph{proximal point algorithm}~\citep{martinet1970breve,rockafellar1976monotone}.

As a natural first step towards developing a proximal point algorithm for sampling, one can combine the proximal map with the standard Langevin algorithm, leading to the \emph{proximal Langevin algorithm}. This algorithm was introduced in~\citet{pereyra2016proximal} and analyzed in the papers~\citet{bernton2018proximal,wibisono2019proximal,salimrichtarik2020proximallangevin}. 
Although these results are encouraging, the analogy between optimization methods and Langevin-based algorithms is imperfect because the discretization of the latter leads to asymptotic \emph{bias}, a feature which is typically not present in optimization (see~\citet{wibisono2018samplingoptimization} for a thorough discussion).

Remarkably, a new proximal algorithm for sampling was proposed recently in~\citet{LST21} which overcomes this issue via a novel Gibbs sampling approach.
Briefly, the \emph{proximal sampler} is a sampling algorithm which assumes access to samples from an oracle distribution, known as the \emph{restricted Gaussian oracle} (RGO); the RGO is a sampling analogue of the proximal map from optimization. Under this assumption, as well as the additional assumption that the target $\pi^X$ is strongly log-concave,~\citet{LST21} proved\footnote{There is an error in the conference version of the paper which is fixed in the arXiv version~\citep{LST21arxiv}.} that the proximal sampler converges exponentially fast to $\pi^X$ in total variation distance. In their paper, the proximal sampler was used as a \emph{reduction framework} to improve the condition number dependence of other sampling algorithms. Indeed, the RGO is a better conditioned distribution than the target distribution, so that implementing the RGO is easier than solving the original sampling task. In turn, the reduction framework allowed them to establish improved complexity results for a variety of structured log-concave sampling problems. We review the proximal sampler and its implementability in Section~\ref{scn:proximal_sampler}.

\paragraph{Our contributions.} Prior to our work, the convergence of the proximal sampler was only known in the case when $\pi^X \propto \exp(-f)$ is strongly log-concave. In this paper, we greatly expand the classes of targets to which the proximal sampler is applicable by providing new convergence guarantees.

First, we consider the case when $f$ is weakly convex. We show that after $k$ iterations, the proximal sampler outputs a distribution whose KL divergence to the target is $O(1/k)$. Our proof is analogous to, and is inspired by, the corresponding guarantee for minimizing a weakly convex function (in particular, the $O(1/k)$ rate matches the optimization result).

Next, we assume that $\pi^X$ satisfies a \emph{functional inequality}, e.g.\ a Poincar\'e inequality or a log-Sobolev inequality. Such functional inequalities have been employed in the sampling literature as tractable settings for non-log-concave sampling; see~\citet{VW19, chewietal2021lmcpoincare}. For these distributions, we show that the proximal sampler converges to the target in R\'enyi divergence (or any other weaker metric, such as KL divergence) with a rate that matches the known convergence rates for the continuous-time Langevin diffusion under the same assumptions.

In each of these settings, if we additionally assume that $\nabla f$ is Lipschitz, then the RGO is implementable, as it becomes a smooth strongly log-concave distribution. Hence, we obtain new sampling guarantees for gradient Lipschitz potentials when the target is weakly log-concave or satisfies a functional inequality. In all cases, our results are \emph{stronger} than known results in the literature.

Finally, we clarify the connection between the proximal sampler and the proximal point algorithm in optimization in the following ways: (1) We show that convergence proofs for the proximal sampler can be translated to yield convergence proofs for the proximal point algorithm. As a consequence, we obtain a new convergence guarantee for the proximal point method under a gradient domination condition with optimal rate, which is (to the best of our knowledge) a new result. (2) We show that the RGO can be interpreted as a proximal mapping on Wasserstein space, and that the proximal sampler can be interpreted as an entropically regularized Wasserstein proximal method (i.e.\ JKO scheme). The latter perspective allows us to recover the proximal point algorithm as a certain limit of the proximal sampler as the ``noise level'' (corresponding to the entropic regularization) tends to zero.

\paragraph{Organization.} The rest of the paper is organized as follows. We begin with background on distances between probability measures in Section~\ref{scn:background} and on the proximal sampler in Section~\ref{scn:proximal_sampler}.

We give our main results in Section~\ref{scn:results}. In particular, we state our new convergence guarantees for the proximal sampler in Section~\ref{scn:new_conv_results}, and we give applications of our results in Section~\ref{scn:applications}. We then describe the connections between the proximal sampler and the proximal point method in Section~\ref{scn:prox_sampler_and_ppm}. All proofs are given in Section~\ref{scn:pfs}.

Finally, we conclude and list open directions in Section~\ref{scn:conclusion}.

\section{Background and notation}\label{scn:background}

Throughout the paper, we abuse notation by identifying a probability measure with its density w.r.t.\ Lebesgue measure.
For a probability measure $\rho \ll \pi$, we define the \emph{KL divergence}, the \emph{chi-squared divergence}, and the \emph{R\'enyi divergence} of order $q\ge 1$ respectively via
\begin{align*}
    H_\pi(\rho)
    &:= \int \rho \log \frac{\rho}{\pi}\,, \qquad \chi^2_\pi(\rho)
    := \int \frac{\rho^2}{\pi} - 1\,, \qquad R_{q,\pi}(\rho)
    := \frac{1}{q-1}\log \int \frac{\rho^q}{\pi^{q-1}}\,,
\end{align*}
with $R_{1,\pi} = H_\pi$. We recall that for $1 \le q \le q' < \infty$, we have the monotonicity property $R_{q,\pi} \le R_{q',\pi}$, and that $R_{2,\pi} = \log(1 + \chi^2_\pi)$.

We also define the $2$-Wasserstein distance between $\rho$ and $\pi$ to be
\begin{align*}
    W_2^2(\rho,\pi)
    &:= \inf_{\gamma \in \eu C(\rho,\pi)} \int \norm{x-y}^2 \, d \gamma(x,y)\,,
\end{align*}
where $\eu C(\rho,\pi)$ is the set of \emph{couplings} of $\rho$ and $\pi$, i.e.\ joint distributions on $\R^d\times\R^d$ whose marginals are $\rho$ and $\pi$.
We refer readers to~\citet{villani2003topics} for an introduction to optimal transport, and to~\citet{ambrosio2008gradient} for a detailed treatment of Wasserstein calculus.

\section{The proximal sampler}\label{scn:proximal_sampler}

Our goal is to sample from a target probability distribution $\pi^X$ on $\R^d$ with density $\pi^X \propto \exp(-f)$ and finite second moment, where $f \colon \R^d \to \R$ is the \emph{potential}.

Following~\citet{LST21}, we define the joint target distribution $\pi$ on $\R^d \times \R^d$ with density
$$\pi(x,y) \propto \exp\Bigl(-f(x) - \frac{1}{2\eta} \, \norm{x-y}^2\Bigr)\,,$$ 
where $\eta > 0$ is the \emph{step size} of the algorithm.

Observe that the $X$-marginal of $\pi$ is equal to the original target distribution $\pi^X$, whereas the conditional distribution of $Y$ given $X$ is Gaussian:
$\pi^{Y \mid X}(\cdot \mid x) = \N(x, \eta I)$.
Therefore, the $Y$-marginal is the convolution of $\pi^X$ with a Gaussian, $\pi^Y = \pi^X \ast \N(0, \eta I)$. The perspective that we adopt in our proofs is that $\pi^Y$ is obtained by evolving $\pi^X$ along the heat flow for time $\eta$.

The conditional distribution of $X$ given $Y$ is the ``regularized'' distribution
$$\pi^{X \mid Y}(x \mid y) \propto_x \exp\Bigl(-f(x) - \frac{1}{2\eta} \, \norm{x-y}^2\Bigr)\,.$$

The \emph{restricted Gaussian oracle} (RGO) is defined as an oracle that, given $y \in \R^d$, outputs a random variable distributed according to $\pi^{X \mid Y}(\cdot\mid y)$.
We also write $\pi^{X \mid Y}(\cdot\mid y) = \pi^{X \mid Y=y}$. 

\paragraph{Proximal Sampler:}
The proximal sampler is initialized at a point $x_0\in\R^d$ and performs Gibbs sampling on the joint target $\pi$.
That is, the proximal sampler iterates the following two steps:
\begin{enumerate}
    \item From $x_k$, sample $y_k \mid x_k \sim \pi^{Y \mid X}(\cdot \mid x_k) \,=\, \N(x_k, \eta I)$.
    \item From $y_k$, sample $x_{k+1} \mid y_k \sim \pi^{X \mid Y}(\cdot \mid y_k)$.
\end{enumerate}
The first step consists in sampling a Gaussian random variable centered at $x_k$, and is therefore easy to implement. The second step calls the RGO at the point $y_k$.

As is well-known from the theory of Gibbs sampling, the iterates ${(x_k,y_k)}_{k\in\mathbb N}$ form a reversible Markov chain with stationary distribution $\pi$. That is, the proximal sampler is an \emph{unbiased} sampling algorithm, unlike algorithms based on discretizations of stochastic processes such as the unadjusted Langevin algorithm. This is because the proximal sampler is an idealized algorithm in which we assume \emph{exact} access to the RGO\@.
For our applications, we implement the RGO via rejection sampling; see Section~\ref{scn:applications} for details and Section~\ref{Sec:Gaussian} for an explicit example in the Gaussian case.

\section{Results}\label{scn:results}

\subsection{New convergence results for the proximal sampler}\label{scn:new_conv_results}

In this section, we describe our new convergence results for the proximal sampler under various assumptions, beginning with the strongly log-concave and weakly log-concave cases, and then proceeding to targets satisfying functional inequalities which allow for non-log-concavity.

\subsubsection{Strong log-concavity}

We start by recalling the $W_2$ contraction result from~\citet{LST21arxiv} for the proximal sampler under strong log-concavity.

\begin{theorem}[{\citet[Lemma 2]{LST21arxiv}}]\label{Thm:SLC}
Assume that $\pi^X \propto \exp(-f)$ is $\alpha$-strongly log-concave (\textit{i.e.}, $f$ is $\alpha$-strongly convex), where $\alpha \geq 0$.
For any $\eta > 0$ and for any two initial distributions $\rho^X_0$, $\bar \rho^X_0$, after $k$ iterations of the proximal sampler with step size $\eta$, the respective distributions $\rho^X_k$, $\bar \rho^X_k$ satisfy
\begin{align}
    W_2(\rho^X_k, \bar \rho^X_k) \le \frac{W_2(\rho^X_0, \bar\rho^X_0)}{{(1 + \alpha \eta)}^{k}}\,.
\end{align}
\end{theorem}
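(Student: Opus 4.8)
The plan is to establish the $W_2$ contraction by analyzing the two half-steps of the proximal sampler separately and chaining them, using the reversible coupling structure of Gibbs sampling. Fix two initial distributions $\rho^X_0$ and $\bar\rho^X_0$. The first half-step maps $x_k \mapsto y_k$ by adding independent Gaussian noise $\N(0,\eta I)$; equivalently, it is the heat flow for time $\eta$. Since adding the \emph{same} Gaussian noise to two coupled points preserves the squared distance in expectation, this step is a (weak) contraction: for any coupling of $\rho^X_k, \bar\rho^X_k$ we may couple $y_k, \bar y_k$ by sharing the noise, giving $W_2(\rho^Y_k, \bar\rho^Y_k) \le W_2(\rho^X_k, \bar\rho^X_k)$. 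So the first step does not expand distances; all the contraction must come from the RGO step.

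The second half-step applies the Markov kernel $y \mapsto \pi^{X\mid Y=y}$. The key point is that $\pi^{X\mid Y=y} \propto_x \exp(-f(x) - \frac{1}{2\eta}\,\norm{x-y}^2)$ is $(\alpha + \tfrac1\eta)$-strongly log-concave in $x$, since $f$ is $\alpha$-strongly convex and the quadratic penalty contributes $\tfrac1\eta$ to the Hessian lower bound. For strongly log-concave kernels one has the standard contraction estimate: if $P$ is the kernel $y\mapsto\pi^{X\mid Y=y}$, then for all $y,\bar y$,
\begin{align*}
    W_2\bigl(\pi^{X\mid Y=y},\,\pi^{X\mid Y=\bar y}\bigr) \le \frac{1}{1+\alpha\eta}\,\norm{y-\bar y}\,,
\end{align*}
because the drift term is $\frac{1}{\eta}(y - x)$ and the strong convexity constant relative to this quadratic is $\frac{\alpha}{\alpha + 1/\eta} = \frac{\alpha\eta}{1+\alpha\eta}$, so the Lipschitz constant of the barycenter map (and, via synchronous coupling of the Langevin dynamics targeting $\pi^{X\mid Y=y}$, of the kernel itself in $W_2$) is $1 - \frac{\alpha\eta}{1+\alpha\eta} = \frac{1}{1+\alpha\eta}$. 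Integrating this pointwise bound against an optimal coupling of $\rho^Y_k, \bar\rho^Y_k$ yields $W_2(\rho^X_{k+1}, \bar\rho^X_{k+1}) \le \frac{1}{1+\alpha\eta}\,W_2(\rho^Y_k, \bar\rho^Y_k)$.

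Combining the two half-steps gives the one-step contraction $W_2(\rho^X_{k+1}, \bar\rho^X_{k+1}) \le \frac{1}{1+\alpha\eta}\,W_2(\rho^X_k, \bar\rho^X_k)$, and iterating $k$ times gives the claim. The main obstacle is the rigorous justification of the $W_2$-contraction for the RGO kernel when $\alpha = 0$ (where it only gives non-expansion) and, more importantly, making the ``Lipschitz-in-$y$'' estimate precise: one clean route is to recognize $\pi^{X\mid Y=y}$ as the stationary law of the Langevin diffusion with potential $f(\cdot) + \frac{1}{2\eta}\norm{\cdot - y}^2$, run two such diffusions with shared Brownian motion started from the optimal coupling of $\rho^Y_k$ and $\bar\rho^Y_k$ (with the drift using the respective $y$'s), and apply Grönwall to the squared distance; the strong convexity constant $\alpha + 1/\eta$ controls the contraction while the discrepancy in the drifts (of size $\frac1\eta\norm{y-\bar y}$) is exactly absorbed to produce the factor $\frac{1}{1+\alpha\eta}$. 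An alternative, avoiding SDEs entirely, is to use that for strongly log-concave measures the map $y \mapsto (\text{the measure }\pi^{X\mid Y=y})$ is a gradient of a convex function in the Wasserstein sense, or simply to invoke the known fact (e.g.\ from the analysis of proximal maps / Caffarelli-type bounds) that translating the quadratic center by $y$ moves the measure by at most $\frac{1}{1+\alpha\eta}\norm{\cdot}$ in $W_2$. I would present whichever of these is shortest given the paper's conventions.
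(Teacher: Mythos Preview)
Your overall architecture is exactly the paper's: split each iteration into the heat step (non-expansive in $W_2$ via shared noise) and the RGO step, prove the pointwise kernel bound $W_2(\pi^{X\mid Y=y},\pi^{X\mid Y=\bar y})\le \frac{1}{1+\alpha\eta}\,\norm{y-\bar y}$, and then integrate this against an optimal coupling of $\rho_k^Y,\bar\rho_k^Y$ to lift it to the marginals. The coupling construction you describe is precisely what the paper writes out at the end of its proof.

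Where you and the paper diverge is in how the pointwise RGO bound is obtained. The paper's contribution here is to recognize $\pi^{X\mid Y=y}=\prox_{\eta H_{\pi^X}}(\delta_y)$ as a Wasserstein proximal map (equation~\eqref{eq:rgo_as_prox}), write the first-order optimality condition $\id\in y-\eta\,\partial H_{\pi^X}(\pi^{X\mid Y=y})$, and then run the \emph{Euclidean} proximal-contraction argument verbatim in Wasserstein space, using $\alpha$-geodesic strong convexity of $H_{\pi^X}$. This buys a direct structural analogy with the optimization proof (and is the point of this section of the paper). Your route---synchronously coupling two Langevin diffusions with potentials $f(\cdot)+\frac{1}{2\eta}\norm{\cdot-y}^2$ and $f(\cdot)+\frac{1}{2\eta}\norm{\cdot-\bar y}^2$, then applying Gr\"onwall with strong convexity $\alpha+\tfrac1\eta$ and drift mismatch $\tfrac1\eta\norm{y-\bar y}$---also yields the constant $\frac{1}{1+\alpha\eta}$ and is closer in spirit to the original argument in~\citet{LST21arxiv}; it is more elementary but obscures the proximal-point analogy. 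One caution: your first heuristic (``strong convexity relative to the quadratic is $\frac{\alpha}{\alpha+1/\eta}$, so the Lipschitz constant is $1-\frac{\alpha\eta}{1+\alpha\eta}$'') is loose as written and really only computes the Lipschitz constant of the \emph{mode} map $y\mapsto\prox_{\eta f}(y)$; the SDE coupling you sketch afterwards is the argument that actually transfers this to $W_2$.
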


Although this result was stated in~\citet{LST21arxiv} as a convergence result rather than a contraction, the latter is implicit in the proof. From the proof of~\citet{LST21arxiv}, one can also read off a convergence guarantee in KL divergence, although this will be a corollary of our result in Section~\ref{scn:lsi_result}.

We revisit Theorem~\ref{Thm:SLC} in Section~\ref{scn:pf_slc} and provide a proof which more closely resembles a classical convergence proof of the proximal point algorithm. We use Wasserstein subdifferential calculus.

We note that this is the sampling analogue of the classical fact that the proximal map for an $\alpha$-strongly convex function with step size $\eta$ is a $\frac{1}{1+\alpha\eta}$-contraction. In Appendix~\ref{scn:contractivity_proximal_map}, we give a new proof of this fact by translating the proof of~\citet{LST21arxiv} into optimization.

\subsubsection{Log-concavity}

The preceding result does not yield convergence when $\alpha = 0$. We provide a new convergence guarantee for the weakly convex case which closely resembles the optimization guarantee for minimizing weakly convex functions~\citep[see][Theorem 3.3]{bubeck2015convex}.

\begin{theorem}\label{Thm:LC}
    Assume that $\pi^X \propto \exp(-f)$ is log-concave (\textit{i.e.}, $f$ is convex).
    For the $k$-th iterate $\rho_k^X$ of the proximal sampler,
    \begin{align*}
        H_{\pi^X}(\rho_k^X)
        \le \frac{W_2^2(\rho_0^X, \pi^X)}{k\eta}\,.
    \end{align*}
\end{theorem}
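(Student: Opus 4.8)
The plan is to mimic the classical convergence proof of the proximal point algorithm for convex functions, lifted to Wasserstein space, treating the proximal sampler as an entropically regularized JKO scheme. Recall that the two half-steps of the sampler correspond to: (i) evolving $\rho_k^X$ along the heat flow for time $\eta$ to obtain $\rho_k^Y$, and (ii) a ``backward'' step that produces $\rho_{k+1}^X$ as the $X$-marginal of $\rho_k^Y(dy)\,\pi^{X\mid Y}(dx\mid y)$. The key structural fact I would use is that this composite update is exactly the proximal (JKO) step for the functional $H_{\pi^X}$: one has the variational characterization
\begin{align*}
    \rho_{k+1}^X = \argmin_{\rho} \Bigl\{ H_{\pi^X}(\rho) + \frac{1}{2\eta}\,W_2^2(\rho, \rho_k^X) \Bigr\}\,,
\end{align*}
or at least the associated first-order optimality/monotonicity relations. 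From this I would extract the two ingredients that drive the $O(1/k)$ rate: a one-step \emph{energy dissipation inequality} and a one-step \emph{quasi-monotonicity in $W_2$} toward $\pi^X$.

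Concretely, the steps I would carry out are: First, establish the per-step descent estimate
\begin{align*}
    H_{\pi^X}(\rho_{k+1}^X) \le H_{\pi^X}(\rho_{k+1}^X) + \frac{1}{2\eta}\,W_2^2(\rho_{k+1}^X,\rho_k^X) \le H_{\pi^X}(\rho_k^X)\,,
\end{align*}
so the KL values are nonincreasing along the iterates (here I would use convexity of $H_{\pi^X}$ along Wasserstein geodesics, which holds precisely because $\pi^X$ is log-concave). Second, prove the key ``three-point''/comparison inequality: for any $k$,
\begin{align*}
    \eta\,\bigl(H_{\pi^X}(\rho_{k+1}^X) - H_{\pi^X}(\pi^X)\bigr) \le \tfrac{1}{2}\,W_2^2(\rho_k^X,\pi^X) - \tfrac{1}{2}\,W_2^2(\rho_{k+1}^X,\pi^X)\,,
\end{align*}
using $H_{\pi^X}(\pi^X)=0$, convexity of $H_{\pi^X}$ along the geodesic from $\rho_{k+1}^X$ to $\pi^X$, and the first-order optimality condition for the JKO step (the Wasserstein subdifferential of $H_{\pi^X}$ at $\rho_{k+1}^X$ contains the optimal displacement from $\rho_k^X$ scaled by $1/\eta$). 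Third, telescope the comparison inequality over $j=0,\dots,k-1$, giving $\eta\sum_{j=1}^{k} (H_{\pi^X}(\rho_j^X) - 0) \le \tfrac12 W_2^2(\rho_0^X,\pi^X)$, and combine with monotonicity $H_{\pi^X}(\rho_k^X) \le H_{\pi^X}(\rho_j^X)$ for $j\le k$ to conclude $k\eta\,H_{\pi^X}(\rho_k^X) \le \tfrac12 W_2^2(\rho_0^X,\pi^X)$. (If the constant comes out as $\tfrac12$ rather than $1$, I'd revisit whether the heat-flow half-step contributes an extra favorable term — indeed the backward heat step is itself contractive/dissipative for $H_{\pi^X}$, which should absorb the factor of $2$ and yield the stated bound $W_2^2(\rho_0^X,\pi^X)/(k\eta)$.)

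The main obstacle I anticipate is making the Wasserstein subdifferential calculus for the composite (forward heat flow + backward conditional) step fully rigorous — in particular, justifying the first-order optimality condition for $\rho_{k+1}^X$ and the geodesic-convexity inequality for $H_{\pi^X}$ without smoothness assumptions on $f$ (we only assume convexity here, not gradient-Lipschitzness). I would handle this by working with the explicit structure: $\rho_k^Y$ is a heat-flow smoothing of $\rho_k^X$, so $\rho_k^Y$ has a smooth positive density and the conditional $\pi^{X\mid Y}$ has a well-defined score, which lets me identify the relevant displacement map and invoke the standard Wasserstein first-order conditions (as in \citet{ambrosio2008gradient}). An alternative, possibly cleaner route is a purely ``interpolation'' argument: introduce the time-$\eta$ heat semigroup $P_\eta$ explicitly, write $H_{\pi^X}(\rho_{k+1}^X) - H_{\pi^X}(\rho) \le \frac{1}{2\eta}(W_2^2(\rho_k^X,\rho) - W_2^2(\rho_k^X,\rho_{k+1}^X))$ for all $\rho$ via the JKO variational inequality, set $\rho = \pi^X$, and telescope — this sidesteps explicit subdifferentials. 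Either way, once the one-step three-point inequality is in hand, the telescoping and the use of monotonicity are routine.
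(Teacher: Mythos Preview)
Your proposal rests on a structural claim that is false: the composite proximal-sampler update is \emph{not} the JKO step
\[
\rho_{k+1}^X = \argmin_{\rho}\Bigl\{H_{\pi^X}(\rho) + \frac{1}{2\eta}\,W_2^2(\rho,\rho_k^X)\Bigr\}.
\]
As the paper itself shows (Theorem~\ref{thm:JKO}), the sampler is an \emph{entropically regularized} JKO scheme, with $W_{2,2\eta}^2$ in place of $W_2^2$ and with the potential energy $\int f\,d\mu$ rather than $H_{\pi^X}$. You can check the discrepancy directly in the Gaussian example of Section~\ref{Sec:Gaussian}: with $\pi^X=\mc N(0,I)$, $\eta=1$, and $\rho_0^X=\delta_0$, the proximal sampler gives $\rho_1^X=\mc N(0,\tfrac34 I)$, while the genuine JKO step for $H_{\pi^X}$ at $\delta_0$ gives $\mc N(0,\tfrac12 I)$. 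Because the variational characterization fails, the standard EVI/three-point inequality you invoke has no justification here, and neither does your ``alternative, cleaner route'' via the JKO variational inequality --- both routes presuppose exactly the identity that does not hold. Your hedge ``or at least the associated first-order optimality/monotonicity relations'' is not a proof plan: you would need to establish those relations from scratch for the actual two-step update, and you give no mechanism for doing so.

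The paper's proof takes a completely different route with no JKO/telescoping structure. It treats each half-step as a continuous-time flow: the forward step is the simultaneous heat flow on $(\rho_0^X Q_t,\pi^X Q_t)$, and the backward step is realized via the Doob $h$-transform as a reverse-time diffusion (Lemmas~\ref{Lem:SimFlow} and~\ref{lem:simflow2_new}). In both directions the time derivative of $H$ equals $-\tfrac12 J$, and log-concavity of $\pi^X Q_t$ (preserved under convolution) gives, via geodesic convexity of $H_{\pi^X Q_t}$, the inequality $J\cdot W_2^2 \ge H^2$. This yields a differential inequality for $1/H$, which integrates to
\[
\frac{1}{H_{\pi^X}(\rho_k^X)} \ge \frac{1}{H_{\pi^X}(\rho_0^X)} + \frac{k\eta}{W_2^2(\rho_0^X,\pi^X)},
\]
using also the $W_2$-contraction from Theorem~\ref{Thm:SLC} (with $\alpha=0$) to control the denominator. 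The missing idea in your plan is precisely this heat-flow/Doob decomposition and the $J\,W_2^2\ge H^2$ bound; there is no three-point inequality in sight.
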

\begin{proof}
    Section~\ref{Sec:LC}.
\end{proof}

\subsubsection{Log-Sobolev inequality}\label{scn:lsi_result}

Recall that a probability distribution $\pi$ satisfies the log-Sobolev inequality (LSI) with constant $\alpha > 0$ ($\alpha$-LSI) if for any probability distribution $\rho$, the following inequality holds:
\begin{equation}\label{eq:LSI}
    H_\pi (\rho) \le \frac{\alpha}{2} \,J_\pi (\rho)\,.
\end{equation}
Here $J_\pi(\rho)$ is the Fisher information of $\rho$ w.r.t.\ $\pi$; see Section~\ref{Sec:LSI}.
Recall that strong log-concavity implies LSI, and that LSI is equivalent to the gradient domination 
condition for relative entropy $H_\pi$~\citep{ottovillani2000lsi}; see also Section~\ref{scn:lsipl}.

\begin{theorem}\label{Thm:LSI}
Assume that $\pi^X \propto \exp(-f)$ satisfies $\alpha$-LSI.
For any $\eta > 0$ and any initial distribution $\rho_0^X$, the $k$-th iterate $\rho_k^X$ of the proximal sampler with step size $\eta$ satisfies
\begin{align}
    H_{\pi^X}(\rho_k^X) \le \frac{H_{\pi^X}(\rho_0^X)}{{(1 + \alpha \eta)}^{2k}}\,.
\end{align}
Furthermore, for all $q \ge 1$:
\begin{align}
    R_{q,\pi^X}(\rho_k^X) \le \frac{R_{q,\pi^X}(\rho_0^X)}{{(1 + \alpha \eta)}^{2k/q}}\,.
\end{align}
\end{theorem}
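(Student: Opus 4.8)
The plan is to exploit the semigroup structure of the proximal sampler: one step of the sampler decomposes into (i) pushing $\rho_k^X$ along the heat flow for time $\eta$ to obtain a $Y$-marginal $\rho_k^Y$, and (ii) applying the "backward" step $y\mapsto\pi^{X\mid Y=y}$ to obtain $\rho_{k+1}^X$. Each of these two half-steps should contract relative entropy by a factor of $1+\alpha\eta$, giving the factor ${(1+\alpha\eta)}^2$ per full iteration.

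First I would analyze the backward step. The map taking a distribution on $Y$-space to the corresponding distribution on $X$-space obtained by $\rho^Y\mapsto \int \pi^{X\mid Y=y}\,d\rho^Y(y)$ is, by the same Gibbs/heat-flow duality, exactly the heat flow run for time $\eta$ viewed from the $X$-side: indeed $\pi^Y = \pi^X * \N(0,\eta I)$ and $\pi^{X\mid Y}$ is its time-reversal, so the composite "push $\rho^X$ to $\rho^Y$ then pull back" is the simultaneous heat flow on both coordinates. The key lemma is therefore a statement about the heat semigroup: if $\pi^X$ satisfies $\alpha$-LSI, then along the heat flow (equivalently, along the Ornstein--Uhlenbeck-type interpolation implicit in the joint chain), $H_{\pi^Y}(\rho^Y_k) \le H_{\pi^X}(\rho^X_k)/(1+\alpha\eta)$ and likewise $H_{\pi^X}(\rho^X_{k+1}) \le H_{\pi^Y}(\rho^Y_k)/(1+\alpha\eta)$. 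The cleanest route is the differential one: parametrize each half-step by a time variable $t\in[0,\eta]$, write the evolution of relative entropy as $-J$ (Fisher information) via the de Bruijn-type identity, and then use that $\pi^X$ (hence the interpolating measures, after a Gaussian-convolution stability argument for LSI) satisfies LSI to get $\partial_t H \le -\tfrac{2}{\alpha} H$... and here one must be careful to track the exact constant so that integrating from $0$ to $\eta$ produces the factor $(1+\alpha\eta)^{-1}$ rather than an exponential $e^{-2\eta/\alpha}$. This suggests instead computing the entropy decay directly in terms of a change of variables that reveals the $(1+\alpha\eta)$ contraction exactly, as in the strongly-log-concave $W_2$ proof (Theorem \ref{Thm:SLC}); I would mirror that computation at the level of KL divergence, using that LSI is preserved under both the forward Gaussian convolution and the backward conditioning with quantitatively improving constants.

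For the R\'enyi bound I would run the same two-half-step decomposition but track $R_{q,\pi}$. The standard tool is that along the heat flow (or a diffusion with stationary measure satisfying LSI), R\'enyi divergence of order $q$ decays according to a differential inequality of the form $\partial_t R_q \le -\tfrac{c}{q}\,(\text{something}\ge R_q)$; concretely, using LSI one shows $\partial_t R_{q,\pi}(\rho_t) \le -\tfrac{2}{q\alpha}\,\big(\text{Fisher-type quantity}\big)$, and the Fisher-type quantity dominates $R_q$ up to the same constant governing the KL case. Integrating each half-step over its time interval of length $\eta$ then yields a contraction by $(1+\alpha\eta)^{1/q}$, and two half-steps give $(1+\alpha\eta)^{2/q}$ per iteration, hence the claimed ${(1+\alpha\eta)}^{2k/q}$ after $k$ steps. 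Setting $q=1$ recovers the KL statement, so in principle only the R\'enyi argument is needed.

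The main obstacle will be pinning down the \emph{exact} constant $(1+\alpha\eta)$ (not merely an exponential bound) in the heat-flow contraction. Running LSI through a de Bruijn identity naively gives $e^{\pm 2\eta/\alpha}$; obtaining the sharp rational factor requires either (a) using that the LSI constant of $\pi_t^Y = \pi^X * \N(0,t I)$ improves along the flow in a precisely controlled way, so that the time-integrated rate telescopes to $\log(1+\alpha\eta)$, or (b) a direct computation à la the $W_2$ contraction proof of \citet{LST21arxiv}, carried out for relative entropy / R\'enyi divergence rather than transport distance. I expect (b) to be the more robust path, and I would structure the proof in Section \ref{Sec:LSI} around that computation, establishing a self-contained lemma that one forward (heat) half-step and one backward (conditioning) half-step each contract $R_{q,\pi}$ by exactly $(1+\alpha\eta)^{1/q}$ under $\alpha$-LSI.
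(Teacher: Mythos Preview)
Your route (a) is precisely what the paper does, so you should commit to it rather than (b). The LSI constant of $\pi^X * \N(0,tI)$ is $\alpha_t = \alpha/(1+\alpha t)$ (it \emph{degrades}, not improves, under Gaussian convolution), and $\int_0^\eta \alpha_t\,dt = \log(1+\alpha\eta)$ is exactly the telescoping you anticipated. Combined with the de~Bruijn-type identity $\partial_t R_{q,\pi^X Q_t}(\rho_0^X Q_t) = -\tfrac{1}{2}\, J_{q,\pi^X Q_t}(\rho_0^X Q_t)$ (Lemma~\ref{Lem:SimFlow}) and the R\'enyi form of LSI, $J_{q,\pi} \ge \tfrac{2\alpha}{q}\, R_{q,\pi}$, Gr\"onwall gives the forward half-step contraction $(1+\alpha\eta)^{-1/q}$ on the nose. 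By contrast, the $W_2$-style argument of Theorem~\ref{Thm:SLC} genuinely uses strong convexity (displacement convexity of $H_{\pi^X}$), and it is not clear how to run it under LSI alone; the paper does not pursue (b).

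The real gap in your proposal is the backward step. The channel $\rho^Y \mapsto \int \pi^{X\mid Y=y}\,d\rho^Y(y)$ is \emph{not} ``the heat flow run for time $\eta$ viewed from the $X$-side''; naively it is just a Markov kernel, and data-processing gives only non-expansion, not the sharp $(1+\alpha\eta)^{-1/q}$ factor. Your ``time-reversal'' intuition is correct, but it has to be made precise: the paper realizes the backward channel as a diffusion via the Doob $h$-transform, namely the SDE $d\hat Z_t^- = \nabla\log\pi_{\eta-t}(\hat Z_t^-)\,dt + dW_t$ started at $\hat Z_0^- \sim \rho^Y$, whose law at time $\eta$ is exactly $\int \pi^{X\mid Y=y}\,d\rho^Y(y)$. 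A separate computation (Lemma~\ref{lem:simflow2_new}) then shows that along this backward diffusion the dissipation identity $\partial_t \Phi_{\pi^Y Q_t^-}(\rho^Y Q_t^-) = -\tfrac{1}{2}\, D_{\pi^Y Q_t^-}(\rho^Y Q_t^-)$ holds with the \emph{same} form as in the forward step; since $\pi^Y Q_t^- = \pi^X * \N(0,(\eta-t)I)$ has LSI constant $\alpha_{\eta-t}$, the same integration yields another factor $(1+\alpha\eta)^{-1/q}$. Without this Doob-transform representation there is no differential structure on the backward step to which LSI can be applied, and you would be stuck at the suboptimal rate $(1+\alpha\eta)^{-k/q}$.
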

\begin{proof}
Section~\ref{Sec:LSI}.
\end{proof}

\subsubsection{Poincar\'e inequality}

Recall that a probability distribution $\pi$ satisfies the Poincar\'e inequality (PI) with constant $\alpha > 0$ ($\alpha$-PI) if for any smooth bounded function $\psi : \R^d\to\R$, the following inequality holds:
\begin{align}\label{eq:pi}
    \var_\pi(\psi)
    &\le \frac{1}{\alpha} \E_\pi[\norm{\nabla \psi}^2]\,.
\end{align}
Recall also that LSI implies PI.

\begin{theorem}\label{Thm:Poincare}
Assume $\pi^X \propto \exp(-f)$ satisfies $\alpha$-PI.
For any $\eta > 0$ and any initial distribution $\rho_0^X$, the $k$-th iterate $\rho_k^X$ of the proximal sampler with step size $\eta$ satisfies
\begin{align}
    \chi_{\pi^X}^2(\rho^X_k) \le \frac{\chi_{\pi^X}^2(\rho^X_0)}{{(1 + \alpha \eta)}^{2k}}\,.
\end{align}
Furthermore, for all $q \ge 2$,
\begin{align}
    R_{q,\pi^X}(\rho^X_k) \le \begin{cases} R_{q,\pi^X}(\rho_0^X) - \frac{2k \log(1+\alpha\eta)}{q}\,, &\text{if}~k\le \frac{q}{2\log(1+\alpha\eta)} \, (R_{q,\pi^X}(\rho_0^X) - 1)\,, \\
    1/{(1+\alpha\eta)}^{2(k-k_0)/q}\,, &\text{if}~k \ge k_0 := \lceil \frac{q}{2\log(1+\alpha\eta)} \, (R_{q,\pi^X}(\rho_0^X) - 1)\rceil\,.
    \end{cases}
\end{align}
\end{theorem}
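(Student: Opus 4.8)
The plan is to analyze one iteration of the proximal sampler as a composition of two half-steps and to track the decay of $\chi^2_{\pi^X}$ (resp.\ $R_{q,\pi^X}$) across each. For the backward half-step $y_k\mapsto x_{k+1}$, I would note that the pair $(x_{k+1},y_k)$ has joint law $\rho_k^Y(y)\,\pi^{X\mid Y}(x\mid y)$, which shares the conditional $\pi^{X\mid Y}$ with the joint target $\pi(x,y)=\pi^Y(y)\,\pi^{X\mid Y}(x\mid y)$; the density ratio therefore depends on $y$ only, so $\chi^2_\pi\Par{\text{law of }(x_{k+1},y_k)}=\chi^2_{\pi^Y}(\rho_k^Y)$ and likewise for $R_q$ (and $H$). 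Marginalizing onto the $x$-coordinate, the data-processing inequality gives $\chi^2_{\pi^X}(\rho_{k+1}^X)\le\chi^2_{\pi^Y}(\rho_k^Y)$ and $R_{q,\pi^X}(\rho_{k+1}^X)\le R_{q,\pi^Y}(\rho_k^Y)$; thus the backward step is a weak contraction and all quantitative decay must come from the forward step.

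For the forward half-step $x_k\mapsto y_k$, recall that $\rho_k^Y=\rho_k^X\ast\N(0,\eta I)$ is obtained by running the heat flow from $\rho_k^X$ for time $\eta$, and that $\pi^X$ evolves to $\pi^Y$ the same way. Writing $\rho_t,\pi_t$ for these heat flows and $g_t:=\rho_t/\pi_t$ (so $\partial_t g_t=\Delta g_t+2\,\nabla\log\pi_t\cdot\nabla g_t$), an integration-by-parts computation yields
\[
\frac{d}{dt}\,\chi^2_{\pi_t}(\rho_t)=-2\int\pi_t\,\norm{\nabla g_t}^2\,,\qquad
\frac{d}{dt}\int\frac{\rho_t^q}{\pi_t^{q-1}}=-\frac{4(q-1)}{q}\int\pi_t\,\norm{\nabla (g_t^{q/2})}^2\,.
\]
To close these I need a Poincar\'e inequality for the time-$t$ reference $\pi_t=\pi^X\ast\N(0,tI)$; a conditional-variance decomposition gives $\var_{\pi_t}(\psi)\le (t+1/\alpha)\,\E_{\pi_t}[\norm{\nabla\psi}^2]$, i.e.\ $\pi_t$ satisfies $\tfrac{\alpha}{1+\alpha t}$-PI. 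Applying this with $\psi=g_t$ (noting $\var_{\pi_t}(g_t)=\chi^2_{\pi_t}(\rho_t)$) gives $\frac{d}{dt}\chi^2_{\pi_t}(\rho_t)\le-\tfrac{2\alpha}{1+\alpha t}\,\chi^2_{\pi_t}(\rho_t)$; integrating over $t\in[0,\eta]$ (where $\int_0^\eta\tfrac{\alpha}{1+\alpha t}\,dt=\log(1+\alpha\eta)$) and combining with the backward step yields $\chi^2_{\pi^X}(\rho_{k+1}^X)\le(1+\alpha\eta)^{-2}\,\chi^2_{\pi^X}(\rho_k^X)$, and iterating proves the first claim.

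For the R\'enyi bound with $q\ge 2$, I would apply $\tfrac{\alpha}{1+\alpha t}$-PI with $\psi=g_t^{q/2}$ (here $q\ge 2$ is used so that $q/2\ge 1$); using $\E_{\pi_t}[g_t^{q/2}]=\int\rho_t^{q/2}\pi_t^{1-q/2}$ and the monotonicity $R_{q/2,\pi_t}\le R_{q,\pi_t}$, the second identity above converts to
\[
\frac{d}{dt}\,R_{q,\pi_t}(\rho_t)\le-\frac{4\alpha}{q\,(1+\alpha t)}\,\Par{1-\exp\!\big(-R_{q,\pi_t}(\rho_t)\big)}\,.
\]
When $R_{q,\pi_t}(\rho_t)\ge 1$ we have $1-e^{-R_q}\ge\tfrac12$, so $R_q$ decreases at rate at least $\tfrac{2\alpha}{q\,(1+\alpha t)}$; when $R_{q,\pi_t}(\rho_t)\le 1$ the elementary bound $1-e^{-R_q}\ge\tfrac12 R_q$ gives geometric decay at the same rate. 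Integrating over $[0,\eta]$, combining with the backward step, and iterating: as long as $R_{q,\pi^X}(\rho_k^X)\ge 1$ the divergence drops by at least $\tfrac{2}{q}\log(1+\alpha\eta)$ per iteration, which gives the first case and determines $k_0$; once $R_{q,\pi^X}(\rho_{k_0}^X)\le 1$ the geometric phase gives $R_{q,\pi^X}(\rho_k^X)\le(1+\alpha\eta)^{-2(k-k_0)/q}$, the second case. Since $R_q$ is non-increasing along the whole dynamics, we never re-enter the linear phase.

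The routine calculations aside, the points needing care are: (i) making the heat-flow-with-moving-reference identities rigorous --- differentiating under the integral sign and controlling decay at infinity --- which I expect can be handled by standard regularization/truncation; and (ii) in the R\'enyi case, stitching the two phases together cleanly, i.e.\ checking that the linear bound remains valid up to the iteration $k_0$ at which $R_q$ first crosses $1$ and that no constant is lost in the transition. I view identifying the sharp PI constant $\tfrac{\alpha}{1+\alpha t}$ for $\pi_t$ (rather than the naive $\alpha$) as the conceptual key, since it is exactly what makes $\int_0^\eta\tfrac{\alpha}{1+\alpha t}\,dt=\log(1+\alpha\eta)$ produce the exponents appearing in the statement.
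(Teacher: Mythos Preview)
Your overall architecture (track $\chi^2$ and $R_q$ along a forward heat flow with moving reference $\pi_t$, then use a monotonicity argument for the backward step) matches the paper, but there is a genuine gap: your forward step overcounts by a factor of $2$, and this factor is exactly what your DPI-only backward step is missing. With the convention you adopt, $\rho_t=\rho_0^X\ast\mc N(0,tI)$ satisfies $\partial_t\rho_t=\tfrac12\Delta\rho_t$, so the correct evolution of the ratio is $\partial_t g_t=\tfrac12\Delta g_t+\nabla\log\pi_t\cdot\nabla g_t$, not $\Delta g_t+2\nabla\log\pi_t\cdot\nabla g_t$. Redoing your integration by parts with the right normalization gives
\[
\frac{d}{dt}\,\chi^2_{\pi_t}(\rho_t)=-\int\pi_t\,\norm{\nabla g_t}^2,\qquad
\frac{d}{dt}\int\frac{\rho_t^q}{\pi_t^{q-1}}=-\frac{2(q-1)}{q}\int\pi_t\,\norm{\nabla (g_t^{q/2})}^2,
\]
i.e.\ half of what you wrote. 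Hence the forward step yields only $\chi^2_{\pi^Y}(\rho_k^Y)\le(1+\alpha\eta)^{-1}\chi^2_{\pi^X}(\rho_k^X)$ and a per-step R\'enyi decrease of $\tfrac{1}{q}\log(1+\alpha\eta)$, not $\tfrac{2}{q}\log(1+\alpha\eta)$. Combined with your DPI bound on the backward step (which gives no strict contraction), you would obtain $\chi^2_{\pi^X}(\rho_k^X)\le(1+\alpha\eta)^{-k}\chi^2_{\pi^X}(\rho_0^X)$, falling short of the claimed $(1+\alpha\eta)^{-2k}$.

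The missing idea, which the paper supplies, is to extract the second factor of $(1+\alpha\eta)^{-1}$ from the backward step itself rather than settling for DPI\@. The paper does this via the Doob $h$-transform: conditioning Brownian motion started from $\pi^X$ on its time-$\eta$ endpoint realizes the RGO channel $\pi^{X\mid Y}$ as the time-$\eta$ marginal of the reversed SDE $d\hat Z_t^- = \nabla\log\pi_{\eta-t}(\hat Z_t^-)\,dt+dW_t$. Crucially, under this reversed flow the reference evolves as $\pi^Y Q_t^- = \pi^X\ast\mc N(0,(\eta-t)I)$, so one again has a simultaneous-heat-flow identity $\partial_t\Phi_{\pi^Y Q_t^-}(\rho^Y Q_t^-)=-\tfrac12 D_{\pi^Y Q_t^-}(\rho^Y Q_t^-)$ and can apply the $\tfrac{\alpha}{1+\alpha(\eta-t)}$-PI of $\pi^Y Q_t^-$ exactly as in the forward step. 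This yields the matching $(1+\alpha\eta)^{-1}$ contraction in the backward step and hence the sharp exponents in the theorem.
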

\begin{proof}
    Section~\ref{Sec:Poincare}.
\end{proof}

\subsubsection{Lata\l{}a--Oleszkiewicz inequality}

We next consider a family of functional inequalities which interpolate between PI and LSI.
A probability distribution $\pi$ satisfies the Lata\l{}a--Oleszkiewicz inequality (LOI) of order $r \in [1,2]$ and constant $\alpha > 0$ ($(r,\alpha)$-LOI) if for any smooth bounded function $\psi : \R^d\to\R_+$, the following inequality holds:
\begin{align*}
    \sup_{p\in (1,2)} \frac{\var_{p,\pi}(\psi)}{{(2-p)}^{2 \, (1-1/r)}}
    &:= \sup_{p\in (1,2)} \frac{\E_\pi[\psi^2] - {\E_\pi[\psi^p]}^{2/p}}{{(2-p)}^{2 \, (1-1/r)}}
    \le \frac{1}{\alpha} \E_\pi[\norm{\nabla \psi}^2]\,.
\end{align*}
This inequality was introduced in~\citet{latalaoleszkiewicz2000loineq}, and sampling guarantees for the Langevin algorithm under LOI were given in~\citet{chewietal2021lmcpoincare}. The LOI for $r = 1$ is equivalent to PI and the LOI for $r = 2$ is equivalent to LSI, up to absolute constants. Generally speaking, $(r,\alpha)$-LOI captures targets $\pi\propto\exp(-f)$ such that the tails of $f$ grow as $\norm\cdot^r$.

\begin{theorem}\label{thm:loi}
Assume $\pi^X \propto \exp(-f)$ satisfies $(r,\alpha)$-LOI with $r \in [1,2)$.
For any $\eta > 0$, $q\ge 2$, and any initial distribution $\rho_0^X$, the $k$-th iterate $\rho_k^X$ of the proximal sampler with step size $\eta$ satisfies
\begin{align}
    R_{q,\pi^X}(\rho^X_k) \le \begin{cases} \Bigl({R_{q,\pi^X}(\rho_0^X)}^{2/r-1} - \frac{(2/r-1)\,k \log(1+\alpha\eta)}{68q}\Bigr)^{r/(2-r)}\,, &\text{if}~k\le c_0\,, \\
    1/{(1+\alpha\eta)}^{(k-\lceil c_0\rceil)/(68q)}\,, &\text{if}~k \ge \lceil c_0 \rceil\,.
    \end{cases}
\end{align}
where
\begin{align*}
    c_0 := \frac{68q}{(2/r-1)\log(1+\alpha\eta)} \, \bigl({R_{q,\pi^X}(\rho_0^X)}^{2/r-1} - 1\bigr)\,.
\end{align*}
(For $r = 2$, we can instead use Theorem~\ref{Thm:LSI}.)
\end{theorem}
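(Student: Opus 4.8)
The plan is to adapt the proofs of Theorems~\ref{Thm:LSI} and~\ref{Thm:Poincare}, replacing LSI/PI by the $(r,\alpha)$-LOI and reducing everything to a ``R\'enyi form'' of the Lata\l{}a--Oleszkiewicz inequality. Recall the structure of those proofs: one step of the proximal sampler splits into a forward half-step, the heat flow $\rho_k^X \mapsto \rho_k^Y = \rho_k^X \ast \N(0,\eta I)$, and a backward half-step, the RGO channel $\rho_k^Y \mapsto \rho_{k+1}^X$, which is a time-reversed heat flow. Along each half-step the R\'enyi divergence to the co-evolving target $\pi_t$ --- namely $\pi^X \ast \N(0,tI)$ for the forward flow and $\pi^X \ast \N(0,(\eta-t)I)$ for the backward flow, $t \in [0,\eta]$ --- obeys a differential inequality $\frac{d}{dt}\, R_{q,\pi_t}(\rho_t) \le -c\, \mathsf{FI}_{q,\pi_t}(\rho_t)$, where $\mathsf{FI}_q$ is the associated R\'enyi--Fisher information and $c$ is an absolute constant. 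Since $\pi^X \ast \N(0,sI)$ inherits the functional inequality of $\pi^X$ with constant $\tfrac{\alpha}{1+\alpha s}$, and $\int_0^\eta \tfrac{\alpha\, ds}{1+\alpha s} = \log(1+\alpha\eta)$, one full step of the sampler consumes a ``budget'' of $2\log(1+\alpha\eta)$ units of the functional inequality; this is the origin of the factor $2$ and of the $\log(1+\alpha\eta)$ appearing in the stated rates.

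Given this framework, the one new ingredient is a lower bound on $\mathsf{FI}_{q,\pi}(\rho)$ in terms of $R_{q,\pi}(\rho)$ under $(r,\alpha)$-LOI. Following the computations in~\citet{chewietal2021lmcpoincare}, I would prove that $(r,\alpha)$-LOI implies
\[
    \mathsf{FI}_{q,\pi}(\rho) \ \ge\ \frac{c\,\alpha}{q}\,\min\bigl\{R_{q,\pi}(\rho),\ R_{q,\pi}(\rho)^{\,2-2/r}\bigr\}
\]
for an absolute constant $c$, so that $\mathsf{FI}_q$ is of order $\tfrac{\alpha}{q} R_q$ when $R_q \le 1$ and of order $\tfrac{\alpha}{q} R_q^{2-2/r}$ when $R_q \ge 1$. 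The proof applies the LOI to the test function $\psi = (\rho/\pi)^{q/2}$: one has $\E_\pi[\psi^2] = \exp((q-1)\,R_{q,\pi}(\rho))$, the quantity $\E_\pi[\norm{\nabla\psi}^2]$ is a $q^2/4$-multiple of the (unnormalized) R\'enyi--Fisher information, and the subtracted term satisfies $\E_\pi[\psi^p]^{2/p} = \exp\!\bigl(\tfrac2p(\tfrac{pq}{2}-1)\,R_{pq/2,\pi}(\rho)\bigr) \le \exp\!\bigl((q-\tfrac2p)\,R_{q,\pi}(\rho)\bigr)$, using monotonicity of R\'enyi divergences in the order (here $R_{pq/2,\pi} \le R_{q,\pi}$ because $p<2$); optimizing the free parameter $p\in(1,2)$ --- taking $2-p$ of order $1/R_{q,\pi}(\rho)$ when $R_q$ is large and of order $1$ when $R_q$ is small --- yields the two regimes above. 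I also need this bound for each interpolating target $\pi^X \ast \N(0,sI)$; since convolution with a Gaussian does not change the order $r$ of the tail growth of the potential, each of these satisfies an $(r,\cdot)$-LOI with constant comparable to $\tfrac{\alpha}{1+\alpha s}$, which only affects absolute constants and ultimately produces the numerical constant $68$.

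It remains to integrate the resulting differential inequality. While $R_{q,\pi^X}(\rho_k^X)\le 1$, the $\mathsf{FI}_q$ bound gives $\frac{d}{dt} R_{q,\pi_t}(\rho_t) \le -\tfrac{c\,\alpha_t}{q}\,R_{q,\pi_t}(\rho_t)$ along the half-steps, which integrates over a full step to multiplicative decay of $R_q$ by a factor $(1+\alpha\eta)^{1/(68q)}$ --- this is the second branch, valid once $k\ge\lceil c_0\rceil$. While $R_{q,\pi^X}(\rho_k^X)\ge 1$, I substitute $u := R_{q,\pi_t}(\rho_t)^{2/r-1}$ --- legitimate precisely because $r<2$ --- which turns the differential inequality into $\frac{d}{dt} u \le -\tfrac{c\,(2/r-1)\,\alpha_t}{q}$, i.e.\ $u$ decreases linearly at a rate proportional to the time-varying constant $\alpha_t$. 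Integrating over a full step (budget $2\log(1+\alpha\eta)$) and summing over $k$ gives $R_{q,\pi^X}(\rho_k^X)^{2/r-1} \le R_{q,\pi^X}(\rho_0^X)^{2/r-1} - \tfrac{(2/r-1)\,k\log(1+\alpha\eta)}{68q}$; since $R_q = u^{r/(2-r)}$ and $(2/r-1)\cdot\tfrac{r}{2-r}=1$, this is exactly the first branch, and it remains valid until $u$ reaches $1$, i.e.\ until $k=c_0$. For larger $k$ one is in the regime $R_q\le 1$ and the second branch takes over. (When $r=2$ this substitution degenerates, since $2/r-1=0$, and one must instead use the genuinely exponential bound $\mathsf{FI}_q\ge c\,\alpha R_q/q$; this is the reason for the closing parenthetical remark directing the reader to Theorem~\ref{Thm:LSI}.)

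The step I expect to be the main obstacle is the R\'enyi--LOI estimate of the second paragraph: carrying out the optimization over $p\in(1,2)$ while keeping the constants explicit, and in particular controlling the subtracted moment $\E_\pi[\psi^p]^{2/p}$ --- which involves a R\'enyi divergence of a \emph{different}, smaller order $pq/2$ --- tightly enough to land at the clean two-regime form with a single absolute constant. A secondary technical point is the quantitative stability of the LOI under convolution with a Gaussian, which is needed to pass from $\pi^X$ to the interpolants $\pi^X\ast\N(0,sI)$ and should hold but requires some care with constants.
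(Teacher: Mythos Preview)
Your proposal is correct and follows essentially the same route as the paper: the forward/backward heat-flow decomposition via Lemmas~\ref{Lem:SimFlow} and~\ref{lem:simflow2_new}, the two-regime R\'enyi--LOI bound $J_{q,\pi}(\rho)\gtrsim \tfrac{\alpha}{q}\min\{R_q,R_q^{2-2/r}\}$, the substitution $u=R_q^{2/r-1}$ in the large-$R_q$ regime, and the integration $\int_0^\eta \alpha_t\,dt=\log(1+\alpha\eta)$ are all exactly what the paper does.

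Two minor remarks on your anticipated obstacles. First, the paper does not reprove the R\'enyi--LOI estimate but simply cites~\citet[Theorem 2]{chewietal2021lmcpoincare}, which already gives the two-regime bound with explicit constant $1/68$; your sketch of its proof (apply LOI to $\psi=(\rho/\pi)^{q/2}$, bound $\E_\pi[\psi^p]^{2/p}$ by R\'enyi monotonicity, optimize $p$) is accurate. Second, your ``secondary technical point'' on stability of LOI under Gaussian convolution is handled cleanly and with the exact constant: since $\var_{p,\pi}$ is a $\Phi$-entropy for $\Phi(x)=x^{2/p}$, subadditivity of $\Phi$-entropy (as in~\citet{boucheronetal2013concentration}) immediately gives that if $\mu_0$ is $(r,\alpha_0)$-LOI and $\mu_1$ is $(r,\alpha_1)$-LOI then $\mu_0\ast\mu_1$ is $(r,(\alpha_0^{-1}+\alpha_1^{-1})^{-1})$-LOI, so $\pi^X\ast\N(0,sI)$ is $(r,\tfrac{\alpha}{1+\alpha s})$-LOI with no loss in constants.
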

\begin{proof}
    Section~\ref{scn:loi_pf}.
\end{proof}

To interpret the result, suppose that $R_{q,\pi^X}(\rho_0^X) = O(d)$ at initialization and that $\eta \ll 1/\alpha$. Then, the theorem states that after an initial waiting period of $\lceil c_0\rceil = O(d^{2/r-1}/\eta)$ iterations, in which the R\'enyi divergence decays to $O(1)$, the R\'enyi divergence decays exponentially thereafter. This interpolates between a waiting time of $O(d/\eta)$ under PI ($r = 1$; Theorem~\ref{Thm:Poincare}) and a waiting time of $O((\log d)/\eta)$ under LSI ($r=2$; Theorem~\ref{Thm:LSI}).

\subsection{Applications of the convergence results}\label{scn:applications}

We start with a corollary of Theorem~\ref{Thm:LC}. Suppose that $f$ is $\beta$-smooth, i.e.\ $\nabla f$ is $\beta$-Lipschitz. Then, provided $\frac{1}{\eta} \ge \beta$, the RGO $\pi^{X\mid Y}$ is strongly-log-concave, with condition number $(1+\beta\eta)/(1-\beta\eta)$. We can implement the RGO via rejection sampling.

\paragraph{Rejection Sampling:} Given a target distribution $\tilde \pi \propto \exp(-\tilde f)$, where $\tilde f$ is $\tilde \alpha$-strongly convex, perform the following steps.
\begin{enumerate}
    \item Compute the minimizer $x^\star$ of $\tilde f$.
    \item Repeat until acceptance: draw a random variable $Z \sim \mc N(x^\star, \tilde \alpha^{-1} I)$ and accept it with probability $\exp(-\tilde f(Z) + \tilde f(x^\star) + \frac{\tilde\alpha}{2} \, \norm{Z - x^\star}^2)$.
\end{enumerate}
The resulting sample is distributed according to $\tilde \pi$, and one can show that the expected number of iterations of the algorithm is bounded by $\tilde \kappa^{d/2}$ with $\tilde \kappa := \tilde \beta/\tilde \alpha$ and $\tilde\beta$ is the smoothness of $\tilde f$; see e.g.~\citet[Theorem 7]{chewietal2021logconcave1d}.

We apply this to $\tilde f$ given by $\tilde f(x) = f(x) + \frac{1}{2\eta} \, \norm{x-y}^2$. The algorithm above requires exact minimization of $\tilde f$, which we assume for simplicity (since it is well-known how to efficiently minimize a strongly convex and smooth function). With the choice $\eta \asymp \frac{1}{\beta d}$, the expected number of iterations is $O(1)$. Combining this implementation of the RGO with Theorem~\ref{Thm:LC}, we obtain:

\begin{corollary}
Suppose $\pi^X \propto \exp(-f)$ where $f$ is convex and $\beta$-smooth.
    Take $\eta \asymp \frac{1}{\beta d}$ and implement the RGO with rejection sampling as described above.
    Then, the proximal sampler outputs $\rho_k^X$ with $H_{\pi^X}(\rho_k^X) \le \varepsilon$ and the expected number of calls to an oracle for $f$ is $O(\beta d\, W_2^2(\rho_0^X, \pi^X)/\varepsilon)$.
\end{corollary}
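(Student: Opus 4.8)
The plan is to combine the iteration-complexity bound of Theorem~\ref{Thm:LC} with a bound on the expected per-iteration cost of implementing the RGO by rejection sampling, and then to check that the prescribed choice of $\eta$ balances the resulting trade-off.

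First I would count iterations. By Theorem~\ref{Thm:LC}, running the proximal sampler for any $k \ge W_2^2(\rho_0^X,\pi^X)/(\eta\varepsilon)$ iterations already yields $H_{\pi^X}(\rho_k^X) \le \varepsilon$; with $\eta \asymp 1/(\beta d)$ this means $k = O(\beta d\, W_2^2(\rho_0^X,\pi^X)/\varepsilon)$ suffices. Each iteration of the proximal sampler performs one trivial Gaussian draw in Step~1 and one RGO call in Step~2, so it remains to bound the expected number of oracle evaluations of $f$ used by a single RGO call.

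Next I would analyze the RGO target $\tilde\pi \propto \exp(-\tilde f)$ with $\tilde f(x) = f(x) + \frac{1}{2\eta}\,\norm{x-y}^2$. Since $f$ is convex and $\beta$-smooth, $\tilde f$ is $\tilde\alpha$-strongly convex and $\tilde\beta$-smooth with $\tilde\alpha = 1/\eta$ and $\tilde\beta = \beta + 1/\eta$, hence has condition number $\tilde\kappa = \tilde\beta/\tilde\alpha = 1 + \beta\eta$ (in particular $1/\eta \ge \beta$ holds automatically for $d \ge 1$, so the RGO is strongly log-concave and the rejection sampler applies). Plugging into the quoted rejection-sampling guarantee, the expected number of accept/reject rounds — each costing $O(1)$ evaluations of $f$, together with the one exact minimization of $\tilde f$ that we assume is performed for free — is at most $\tilde\kappa^{d/2} = (1+\beta\eta)^{d/2} \le \exp(\beta\eta d/2)$. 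With $\eta \asymp 1/(\beta d)$ the exponent is $O(1)$, so each RGO call costs $O(1)$ oracle calls in expectation. Multiplying by the iteration count from the previous step gives the claimed total of $O(\beta d\, W_2^2(\rho_0^X,\pi^X)/\varepsilon)$ oracle calls, and linearity of expectation handles the fact that the number of rejection rounds varies across the $k$ calls.

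The only real subtlety — and the reason $\eta$ is chosen as it is — is the tension between the two estimates: the iteration count scales like $1/\eta$ (favoring large $\eta$), whereas the per-RGO cost scales like $\exp(\Theta(\beta\eta d))$ (favoring $\eta \lesssim 1/(\beta d)$). Taking $\eta \asymp 1/(\beta d)$ keeps the acceptance probability $\Omega(1)$ while making $1/\eta$ as small as this constraint allows; a larger $\eta$ would blow up the RGO cost exponentially in $d$, and a smaller $\eta$ would only inflate the iteration count. Everything else is bookkeeping — absorbing ceilings and absolute constants into the $O(\cdot)$ — and there is no RGO-implementation error to track, since the rejection sampler returns an exact sample.
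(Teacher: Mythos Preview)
Your proposal is correct and follows essentially the same approach as the paper: the paper's argument is just the one-line combination ``with $\eta \asymp 1/(\beta d)$ the rejection sampler has $\tilde\kappa^{d/2}=O(1)$ expected rounds, and Theorem~\ref{Thm:LC} gives the iteration count,'' and you have filled in precisely those details. One minor remark: the paper quotes the RGO condition number as $(1+\beta\eta)/(1-\beta\eta)$ because its preceding discussion allows $f$ to be merely $\beta$-smooth rather than convex, whereas your $\tilde\kappa=1+\beta\eta$ uses the convexity assumed in this corollary --- both are fine here and lead to the same $O(1)$ bound.
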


More precisely, our algorithm requires access to an oracle of $f$ which can evaluate $f$ and compute the proximity operator for $f$.

We now compare this rate with others in the literature. Let $\mf m_2$ denote the second moment of $\pi^X$.
For example, $\mf m_2 = O(d)$ for a product measure, and $\mf m_2 = O(d^2)$ when $f(x) = \sqrt{1+\norm x^2}$. It is reasonable to assume that the Poincar\'e constant $\alpha$ of $\pi^X$ is $\Omega(d/\mf m_2)$ and that $W_2^2(\rho_0^X, \pi^X) = O(\mf m_2)$. With these simplifications, our complexity is $O(\beta d \mf m_2/\varepsilon)$; averaged LMC achieves $\widetilde O(\beta d \mf m_2/\varepsilon^2)$~\citep{durmusmajewski2019lmcconvex}; MALA achieves $\widetilde O(\beta^{3/2} d^{1/2} \mf m_2^{3/2}/\varepsilon^{3/4})$ albeit in ${\rm TV}^2$~\citep{dwivedi2019log, chenetal2020hmc}; and LMC achieves $\widetilde O(\beta^2 \mf m_2^2/\varepsilon)$ in the stronger R\'enyi metric~\citep{chewietal2021lmcpoincare}. Since all these complexity results also hold in terms of the squared total variation distance, our result has arguably the state-of-the-art complexity for this setting (at least, if dimension dependence is the primary consideration).

Similarly, implementing the RGO with rejection sampling in Theorem~\ref{thm:loi} yields:

\begin{corollary}
    Suppose $\pi^X \propto \exp(-f)$ where $f$ is $\beta$-smooth and $\pi^X$ satisfies $(r, \alpha)$-LOI.
    Take $\eta \asymp \frac{1}{\beta d}$ and implement the RGO with rejection sampling as described above.
    Then, the proximal sampler outputs $\rho_k^X$ with $R_{q,\pi^X}(\rho_k^X) \le \varepsilon $ and the expected number of calls to an oracle for $f$ is $\widetilde O(\frac{\beta dq}{\alpha}\, (R_{q,\pi^X}(\rho_0^X)^{2/r-1} \vee \log(1/\varepsilon)))$.
\end{corollary}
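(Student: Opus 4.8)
The plan is to combine the R\'enyi-divergence convergence rate of Theorem~\ref{thm:loi}, which bounds the number of \emph{iterations} of the proximal sampler needed to reach accuracy $\varepsilon$, with the rejection-sampling implementation of the RGO from Section~\ref{scn:applications}, which bounds the expected number of calls to the $f$-oracle \emph{per iteration}; the total complexity is the product of the two.

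\emph{Iteration count.} By Theorem~\ref{thm:loi}, running the proximal sampler with step size $\eta$ reaches $R_{q,\pi^X}(\rho_k^X)\le 1$ after a burn-in of $\lceil c_0\rceil = \Theta\bigl(\frac{q}{\log(1+\alpha\eta)}\,(R_{q,\pi^X}(\rho_0^X)^{2/r-1}-1)\bigr)$ iterations, after which the R\'enyi divergence contracts by the factor $(1+\alpha\eta)^{-1/(68q)}$ at each step. Hence $R_{q,\pi^X}(\rho_k^X)\le\varepsilon$ after an additional $\Theta\bigl(\frac{q\log(1/\varepsilon)}{\log(1+\alpha\eta)}\bigr)$ iterations, for a total of
\[
  k = O\Bigl(\frac{q}{\log(1+\alpha\eta)}\,\bigl(R_{q,\pi^X}(\rho_0^X)^{2/r-1}\vee\log(1/\varepsilon)\bigr)\Bigr)\,.
\]
Since $\eta\asymp\frac{1}{\beta d}$ we have $\alpha\eta\lesssim 1$, so $\log(1+\alpha\eta)\asymp\alpha\eta\asymp\frac{\alpha}{\beta d}$, and the iteration count becomes $k = O\bigl(\frac{\beta d q}{\alpha}\,(R_{q,\pi^X}(\rho_0^X)^{2/r-1}\vee\log(1/\varepsilon))\bigr)$.

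\emph{Per-iteration cost.} Step~1 of the proximal sampler (sampling a Gaussian centered at $x_k$) makes no calls to the $f$-oracle, while Step~2 samples from the RGO target $\tilde\pi\propto\exp(-\tilde f)$ with $\tilde f(x) = f(x) + \frac{1}{2\eta}\,\norm{x-y_k}^2$. Because $f$ is $\beta$-smooth (so $\nabla^2 f\succeq -\beta I$) and $\eta\asymp\frac{1}{\beta d}<\frac{1}{\beta}$, the function $\tilde f$ is $(\frac{1}{\eta}-\beta)$-strongly convex and $(\frac{1}{\eta}+\beta)$-smooth, with condition number $\tilde\kappa = \frac{1+\beta\eta}{1-\beta\eta} = 1 + O(1/d)$; consequently $\tilde\kappa^{d/2} = \exp\bigl(\frac{d}{2}\log\frac{1+\beta\eta}{1-\beta\eta}\bigr) = O(1)$. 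Therefore the rejection sampler of Section~\ref{scn:applications}, applied to $\tilde f$, terminates after $O(1)$ trials in expectation, each trial using one evaluation of $f$; adding the single proximity-operator computation used to locate the minimizer $x^\star$ of $\tilde f$, this amounts to $O(1)$ expected $f$-oracle calls per iteration, up to logarithmic factors coming from solving the minimization of $\tilde f$ to high accuracy rather than exactly — which is the source of the $\widetilde O$.

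Multiplying the per-iteration cost by $k$ yields the claimed bound $\widetilde O\bigl(\frac{\beta d q}{\alpha}\,(R_{q,\pi^X}(\rho_0^X)^{2/r-1}\vee\log(1/\varepsilon))\bigr)$. The only point requiring care is calibrating the absolute constant hidden in $\eta\asymp\frac{1}{\beta d}$ so that $\beta\eta\le c/d$ for a suitable $c$, keeping $\tilde\kappa^{d/2}$ bounded; this, together with the elementary estimate $\log(1+\alpha\eta)\asymp\alpha\eta$ in the regime $\alpha\eta\lesssim 1$, is routine, and there is no substantive obstacle since both ingredients — Theorem~\ref{thm:loi} and the rejection-sampling guarantee — are already established.
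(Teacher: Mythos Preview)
Your proposal is correct and follows exactly the route the paper intends: the paper does not write out a separate proof for this corollary but simply states that it follows by ``implementing the RGO with rejection sampling in Theorem~\ref{thm:loi}'', and your argument spells out precisely that combination --- the iteration count from Theorem~\ref{thm:loi} (burn-in plus exponential phase, with $\log(1+\alpha\eta)\asymp\alpha/(\beta d)$) multiplied by the $O(1)$ expected rejection-sampling cost from $\tilde\kappa^{d/2}=O(1)$ when $\eta\asymp 1/(\beta d)$.
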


Even for the special case of a Poincar\'e inequality and smoothness, the first sampling guarantee under these assumptions is quite recent~\citep{chewietal2021lmcpoincare}. Let us write $\hat\kappa := \beta/\alpha$ for the ``condition number'' and assume $R_{q,\pi^X}(\rho_0^X) = O(d)$~\citep[see e.g.][Appendix A]{chewietal2021lmcpoincare}. Then, our complexity is $\widetilde O(\hat\kappa d q \, (d^{2/r-1} \vee \log(1/\varepsilon)))$, whereas~\citet[Theorem 7]{chewietal2021lmcpoincare} gives a complexity bound for LMC of order $\widetilde O(\hat\kappa^2 d^{4/r-1} q^3/\varepsilon)$. We note that our result is the \emph{first} high-accuracy guarantee for this setting (i.e.\ the complexity depends polylogarithmically on $\varepsilon$). Moreover, even in the low-accuracy regime $\varepsilon \asymp 1$, our complexity of $\widetilde O(\hat \kappa d^{2/r} q)$ is always better (e.g.\ in the Poincar\'e case $r=1$, our rate is $\widetilde O(\hat\kappa d^2 q )$ whereas~\citet{chewietal2021lmcpoincare} yields $\widetilde O(\hat\kappa^2 d^3 q^3)$), although we note that~\citet{chewietal2021lmcpoincare} handles the more general weakly smooth case.

Surprisingly, the same strategy of rejection sampling also applies to non-smooth potentials. In \citet{LiaChe21}, it was shown that when the above rejection sampling is applied to $\tilde f(x) = f(x) + \frac{1}{2\eta} \, \norm{x-y}^2$ with $f(x)$ being a convex and $M$-Lipschitz function, if $\eta\le 1/(16M^2d)$, the expected number of iterations of the algorithm is bounded above by $2$. Moreover, the result is insensitive to the inexactness of the minimizer of $\tilde f$ \citep{LiaChe21}. Combining it with Theorem \ref{Thm:LC} and Theorem~\ref{Thm:Poincare} we establish:
\begin{corollary}\label{cor:convex_lip_result}
Suppose $\pi^X \propto \exp(-f)$ where $f$ is convex and $M$-Lipschitz. Take $\eta \asymp \frac{1}{M^2 d}$ and implement the RGO with rejection sampling as described above.
\begin{enumerate}
    \item Applying Theorem~\ref{Thm:LC}, we deduce that the proximal sampler outputs $\rho_k^X$ with $H_{\pi^X}(\rho_k^X) \le \varepsilon$ and the expected number of calls to an oracle for $f$ is $O(M^2 d\, W_2^2(\rho_0^X, \pi^X)/\varepsilon)$.
    \item Applying Theorem~\ref{Thm:Poincare} (using the fact that log-concave measures satisfy $\alpha$-PI for some $\alpha > 0$), we deduce that the proximal sampler outputs $\rho_k^X$ with $R_{q,\pi^X}(\rho_k^X) \le \varepsilon$ and the expected number of calls to an oracle for $f$ is $O(\frac{M^2 dq}{\alpha} \, (R_{q,\pi^X}(\rho_0^X) \vee \log(1/\varepsilon)))$.
\end{enumerate}
\end{corollary}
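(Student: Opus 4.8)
The plan is to prove each of the two items by combining an iteration count read off from the relevant convergence theorem (Theorem~\ref{Thm:LC} for item~1, Theorem~\ref{Thm:Poincare} for item~2) with a bound on the expected number of $f$-oracle calls incurred per iteration of the proximal sampler when the RGO is implemented by the rejection sampler described above. The only ingredient beyond pure bookkeeping is the per-iteration cost bound, which we import verbatim from~\citet{LiaChe21}; the choice $\eta \asymp \tfrac{1}{M^2 d}$ is made precisely so that this import is valid.

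\emph{Per-iteration cost, and item~1.} Fix $y \in \R^d$; the RGO target is $\pi^{X\mid Y=y} \propto \exp(-\tilde f)$ with $\tilde f(x) = f(x) + \tfrac{1}{2\eta}\,\norm{x-y}^2$, which is $\tfrac{1}{\eta}$-strongly convex because $f$ is convex. Applying the rejection sampler to $\tilde f$ with $\eta \le \tfrac{1}{16 M^2 d}$, \citet{LiaChe21} guarantees that the expected number of accept/reject trials --- hence the expected number of evaluations of $f$ --- is at most $2$, and that this bound is insensitive to computing the minimizer of $\tilde f$ only approximately. So one iteration of the proximal sampler costs $O(1)$ calls to the $f$-oracle in expectation. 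For item~1, since $f$ is convex, Theorem~\ref{Thm:LC} applies (for any $\eta > 0$) and gives $H_{\pi^X}(\rho_k^X) \le W_2^2(\rho_0^X,\pi^X)/(k\eta)$, which is at most $\varepsilon$ once $k \ge W_2^2(\rho_0^X,\pi^X)/(\eta\varepsilon)$, i.e.\ after $k = O\bigl(M^2 d\, W_2^2(\rho_0^X,\pi^X)/\varepsilon\bigr)$ iterations when $\eta \asymp \tfrac{1}{M^2 d}$; multiplying by the $O(1)$ per-iteration cost yields the claimed total oracle complexity.

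\emph{Item~2.} A convex potential makes $\pi^X$ log-concave, and every log-concave probability measure satisfies $\alpha$-PI for some $\alpha > 0$; fix such an $\alpha$. For $q \ge 2$, Theorem~\ref{Thm:Poincare} shows that once $k \ge k_0 := \lceil \tfrac{q}{2\log(1+\alpha\eta)}\,(R_{q,\pi^X}(\rho_0^X)-1) \rceil$ one has $R_{q,\pi^X}(\rho_k^X) \le (1+\alpha\eta)^{-2(k-k_0)/q}$, which falls below $\varepsilon$ as soon as $k - k_0 \ge \tfrac{q\log(1/\varepsilon)}{2\log(1+\alpha\eta)}$. Hence it suffices to run
\[
    k = O\Bigl( \frac{q}{\log(1+\alpha\eta)}\,\bigl( R_{q,\pi^X}(\rho_0^X) \vee \log(1/\varepsilon) \bigr) \Bigr)
\]
iterations. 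Since $\eta \asymp \tfrac{1}{M^2 d}$, we may assume $\alpha\eta \le 1$ (else the bound only improves), so $\log(1+\alpha\eta) \ge \tfrac12\,\alpha\eta \asymp \alpha/(M^2 d)$ and $k = O\bigl( \tfrac{M^2 d q}{\alpha}\,(R_{q,\pi^X}(\rho_0^X) \vee \log(1/\varepsilon)) \bigr)$; multiplying by the $O(1)$ per-iteration cost finishes item~2. (The case $q = 1$, i.e.\ KL divergence, follows a fortiori from the monotonicity $H_{\pi^X} = R_{1,\pi^X} \le R_{2,\pi^X}$.)

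\emph{Main obstacle.} There is no genuine obstacle here: granted the two convergence theorems and the rejection-sampling cost bound of~\citet{LiaChe21}, the argument is bookkeeping. The points that require a little care are (i) verifying that $\eta \asymp \tfrac{1}{M^2 d}$ can be taken to satisfy the hypothesis $\eta \le \tfrac{1}{16 M^2 d}$ of the rejection sampler while remaining harmless for Theorems~\ref{Thm:LC} and~\ref{Thm:Poincare}, which hold for all $\eta > 0$; (ii) traversing the burn-in phase $k_0$ in Theorem~\ref{Thm:Poincare} and replacing $\log(1+\alpha\eta)$ by $\alpha\eta$ up to constants; and (iii) noting that the Poincar\'e constant of a generic log-concave target can be extremely small, so the bound in item~2 is only as strong as the isoperimetry of $\pi^X$ permits.
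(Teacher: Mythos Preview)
Your proposal is correct and matches the paper's approach exactly: the paper does not give a standalone proof of this corollary, simply stating it as a direct combination of the rejection-sampling cost bound from \citet{LiaChe21} with Theorems~\ref{Thm:LC} and~\ref{Thm:Poincare}. Your bookkeeping fills in precisely the details the paper leaves implicit, including the handling of the burn-in phase $k_0$ and the replacement of $\log(1+\alpha\eta)$ by $\alpha\eta$ up to constants.
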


We make the same simplifications as above to compare the rates. Our complexity (from the second part of Corollary~\ref{cor:convex_lip_result} is $O(M^2 \mf m_2 \, (d \vee \log(1/\varepsilon)))$, whereas~\citet{durmusmajewski2019lmcconvex} achieves $O(M^2 \mf m_2/\varepsilon^2)$ in KL divergence and~\citet{LiaChe21} achieves $\widetilde O(M^2 d\mf m_2/\varepsilon^{1/2})$ in squared total variation distance.
In particular, when $\mf m_2 = O(d)$, our result is the state-of-the-art.

\subsection{On the relation between the proximal sampler and the proximal point algorithm}\label{scn:prox_sampler_and_ppm}

The proximal sampler is motivated by the proximal point method in optimization.
Recall that in optimization, the proximal point method for minimizing $f$ is the iteration of the proximal mapping
\begin{align}\label{eq:prox}
    \prox_{\eta f}(y)
    &:= \argmin_{x\in\R^d}{\Bigl\{ f(x) + \frac{1}{2\eta} \, \norm{x-y}^2\Bigr\}}\,
\end{align}
with some step size $\eta > 0$. Formally, using the correspondence $f \leftrightarrow \exp(-f)$ between optimization and sampling, the RGO can be viewed as the sampling analogue of the proximal mapping.

In this section, we establish a more precise correspondence between the proximal sampler algorithm (for sampling from $\exp(-f)$) and the proximal point method (for minimizing $f$).

\subsubsection{Convergence under LSI/PL}
\label{scn:lsipl}

We recall that LSI for $\pi \propto \exp(-f)$ is equivalent to the statement that the relative entropy $H_\pi$ satisfies the gradient domination condition (or the Polyak-\L{}ojasiewicz (PL) inequality) in the Wasserstein metric~\citep{ottovillani2000lsi}.
Thus, in the optimization setting, the analogous assumption to LSI is that $f$ satisfies PL.

We recall $f$ satisfies the PL inequality with constant $\alpha > 0$ ($\alpha$-PL) if for all $x$, 
$$\|\nabla f(x)\|^2 \ge 2\alpha \,(f(x) - f^\star),$$ 
where $f^\ast = \inf f$. 
The PL inequality allows for mild non-convexity of $f$, yet still implies exponential convergence of gradient flow or proximal point method for minimizing $f$; see for example~\citep{KNS16}.

In light of our convergence guarantee for the proximal sampler under LSI in Theorem~\ref{Thm:LSI}, it is natural to ask whether there is an analogous result for the proximal point method under PL.
We answer this affirmatively via the following theorem.
We note that a less careful proof of the argument gives the suboptimal contraction factor $\frac{1}{1+\alpha\eta}$; to the best of our knowledge, we are not aware of another reference which obtains the optimal contraction factor under PL~\citep{attouch2009convergence}.\footnote{The optimality of our bound can be obtained by considering $f(x) = \frac{\alpha}{2}\norm{x}^2$.}

\begin{theorem}\label{Thm:ProxLSI}
    Suppose that $f: \R^d \to (-\infty,+\infty]$ is differentiable and satisfies $\alpha$-PL and let $x' \in \prox_{\eta f}(x)$. Also, write $f^\star = \inf f$.
    Then, it holds that
    \begin{align*}
        f(x') - f^\star
        &\le \frac{1}{{(1+\alpha \eta)}^2} \, \{f(x) - f^\star\}\,.
    \end{align*}
\end{theorem}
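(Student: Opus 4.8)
### Proof plan for Theorem~\ref{Thm:ProxLSI}

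The plan is to mimic the standard proximal-point convergence argument but to extract the squared factor $(1+\alpha\eta)^{-2}$ by a two-step comparison, exactly as the analogous factor $(1+\alpha\eta)^{-2k}$ arises in Theorem~\ref{Thm:LSI}. Write $x' = \prox_{\eta f}(x)$, so by first-order optimality of the proximal subproblem we have the crucial identity
\begin{align*}
    \nabla f(x') + \frac{1}{\eta}\,(x' - x) = 0\,, \qquad\text{i.e.}\qquad x - x' = \eta\,\nabla f(x')\,.
\end{align*}
The first step I would carry out is the ``one-step decrease'' estimate: since $x'$ minimizes $f(\cdot) + \frac{1}{2\eta}\,\norm{\cdot - x}^2$, comparing its value at $x'$ and at $x$ gives $f(x') + \frac{1}{2\eta}\,\norm{x'-x}^2 \le f(x)$, hence $f(x) - f(x') \ge \frac{1}{2\eta}\,\norm{x-x'}^2 = \frac{\eta}{2}\,\norm{\nabla f(x')}^2$. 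Now apply $\alpha$-PL \emph{at the point $x'$}: $\norm{\nabla f(x')}^2 \ge 2\alpha\,(f(x') - f^\star)$. Substituting,
\begin{align*}
    f(x) - f(x') \ge \alpha\eta\,\bigl(f(x') - f^\star\bigr)\,,
\end{align*}
which, adding $f(x') - f^\star$ to both sides, already yields $f(x') - f^\star \le \frac{1}{1+\alpha\eta}\,(f(x) - f^\star)$ — the suboptimal factor mentioned in the footnote.

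To upgrade to the squared factor, the second step is to use PL a \emph{second time}, now quantifying the decrease more tightly by integrating the gradient along a path, rather than discarding the term $\frac{1}{2\eta}\norm{x-x'}^2$ after only one use. The cleaner route: rewrite the one-step inequality as $f(x) - f^\star \ge (1+\alpha\eta)\,(f(x') - f^\star)$ is too lossy; instead, I would bound $f(x) - f(x')$ from \emph{above} as well. Using convexity-free reasoning is delicate, so the approach I favor is to track the quantity $f(x) - f^\star$ directly via the gradient-flow-style interpolation $x_t$ defined by $x_t - x' = t\,(x - x') = t\eta\,\nabla f(x')$ for $t \in [0,1]$, or equivalently to use that the proximal map is the time-$\eta$ map of the ``implicit'' gradient step and apply the PL descent lemma twice — once to relate $f(x)-f^\star$ to $\norm{\nabla f(x')}$ and once to relate $\norm{\nabla f(x')}$ to $f(x')-f^\star$ — then combine multiplicatively. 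Concretely, from $f(x) - f^\star \ge f(x) - f(x') \ge \frac{\eta}{2}\norm{\nabla f(x')}^2$ and separately, PL at $x'$ gives $\frac{\eta}{2}\norm{\nabla f(x')}^2 \ge \alpha\eta\,(f(x')-f^\star)$; but to get the square one shows $f(x)-f^\star \ge (1+\alpha\eta)^2\,(f(x')-f^\star)$ by also lower-bounding $f(x) - f(x')$ by $\alpha\eta\,(f(x)-f^\star) \cdot \frac{1}{1+\alpha\eta}$-type terms. The precise bookkeeping is: let $a = f(x)-f^\star$, $b = f(x')-f^\star$, $g = \norm{\nabla f(x')}$. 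Then $a - b \ge \frac{\eta}{2} g^2$ and $g^2 \ge 2\alpha b$. The sharp combination uses that $a - b \ge \frac{\eta}{2}g^2 \ge \alpha\eta b$ gives $a \ge (1+\alpha\eta) b$; to iterate, note $x$ is \emph{not} arbitrary relative to $x'$ — it satisfies $x = x' + \eta\nabla f(x')$, and along the segment from $x'$ to $x$ one can integrate $\frac{d}{dt}f(x'+t\eta\nabla f(x')) = \eta\,\inprod{\nabla f(x'+t\eta\nabla f(x'))}{\nabla f(x')}$ and use PL-type monotonicity.

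\textbf{Main obstacle.} The hard part is precisely getting the \emph{square} rather than the single power of $(1+\alpha\eta)$, since the naive telescoping only delivers one factor; the square must come from using the PL inequality "at both endpoints" of the implicit step, analogous to how the factor $(1+\alpha\eta)^2$ per iteration in Theorem~\ref{Thm:LSI} arises because the proximal sampler contracts KL by $(1+\alpha\eta)$ along \emph{each} of its two Gibbs sub-steps (the heat-flow/$Y$ step and the RGO/$X$ step). I expect the clean argument to parallel that two-step structure: decompose the proximal step into the two half-steps implicitly present (one corresponding to moving $x \to $ an intermediate point, matching the $Y$-update, and one corresponding to the RGO-style update back), apply a PL contraction on each, and multiply. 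Once that decomposition is set up correctly the remaining computations are the routine ones above, and the footnote's example $f(x) = \frac{\alpha}{2}\norm{x}^2$ (for which $\prox_{\eta f}(x) = x/(1+\alpha\eta)$ and $f(x') - f^\star = (1+\alpha\eta)^{-2}(f(x)-f^\star)$ exactly) confirms the constant cannot be improved.
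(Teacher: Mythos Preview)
You have correctly derived one factor of $(1+\alpha\eta)^{-1}$ from the elementary argument $f(x) - f(x') \ge \frac{\eta}{2}\,\norm{\nabla f(x')}^2 \ge \alpha\eta\,(f(x')-f^\star)$, and you correctly identify that obtaining the \emph{square} is the crux. However, neither of your proposed routes to the second factor works, and the actual mechanism is different from what you sketch.

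Your linear-path interpolation $x_t = x' + t\eta\,\nabla f(x')$ does not help: along this segment $\frac{d}{dt}f(x_t) = \eta\,\langle\nabla f(x_t), \nabla f(x')\rangle$, and without convexity there is no control on this inner product---PL says nothing about the \emph{direction} of $\nabla f(x_t)$ for intermediate $t$. Your ``two half-steps'' decomposition is also not the right analogy: two proximal steps of size $\eta/2$ do not compose to one proximal step of size $\eta$, and the naive bound on each would in any case give $(1+\alpha\eta/2)^{-2}$, not $(1+\alpha\eta)^{-2}$. With only the two discrete relations $a-b \ge \frac{\eta}{2}g^2$ and $g^2 \ge 2\alpha b$ that you isolate, one cannot do better than $a \ge (1+\alpha\eta)\,b$.

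The missing idea is to vary the \emph{step size} continuously, tracking the Moreau envelope. Set $\phi(t) := \min_z\{f(z) + \frac{1}{2t}\,\norm{z-x}^2\} = f(x_t) + \frac{1}{2t}\,\norm{x_t - x}^2$ with $x_t := \prox_{tf}(x)$; this is the Hopf--Lax semigroup, the tropical analogue of the heat semigroup used in the proof of Theorem~\ref{Thm:LSI}. The envelope theorem gives $\phi'(t) = -\frac{1}{2t^2}\,\norm{x_t - x}^2$. Splitting $\frac{1}{2t^2} = \frac{\alpha}{2t(1+\alpha t)} + \frac{1}{2t^2(1+\alpha t)}$, rewriting the second piece as $\frac{1}{2(1+\alpha t)}\,\norm{\nabla f(x_t)}^2$ via the optimality condition, and applying PL at $x_t$ yields the differential inequality
\[
    \frac{d}{dt}\bigl(\phi(t) - f^\star\bigr) \le -\frac{\alpha}{1+\alpha t}\,\bigl(\phi(t) - f^\star\bigr)\,,
\]
which integrates from $0$ to $\eta$ to give $\phi(\eta) - f^\star \le \frac{1}{1+\alpha\eta}\,(f(x)-f^\star)$. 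That is the first factor. The second factor is then exactly your elementary step: $\phi(\eta) - f^\star = f(x') - f^\star + \frac{\eta}{2}\,\norm{\nabla f(x')}^2 \ge (1+\alpha\eta)\,(f(x')-f^\star)$, and combining the two gives the square. So the two ``sub-steps'' are not two proximal half-steps but rather (i) a continuous-time Hamilton--Jacobi contraction of the Moreau envelope and (ii) a single discrete PL application comparing $\phi(\eta)$ to $f(x')$.
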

\begin{proof}
Section~\ref{Sec:ProofProxLSI}.
\end{proof}

\subsubsection{RGO as a proximal operator on Wasserstein space}

Consider $y \in \R^d$. Noting that $\pi^{X\mid Y=y}(dx) \propto_{x} \exp(-\frac{1}{2\eta}\norm{x-y}^2) \pi^X(dx)$ and using~\citet[Remark 9.4.2]{ambrosio2008gradient} we have $$H_{\pi^{X}}(\rho^X) = H_{\pi^{X\mid Y=y}}(\rho^X) - \int \frac{1}{2\eta}\,\norm{x-y}^2 \,d\rho^X(x) + C(y)\,,$$ where $C(y)$ is a constant depending only on $y$. Using $\argmin H_{\pi^{X\mid Y=y}}(\cdot) = \pi^{X\mid Y=y}$, the RGO can be expressed as
\begin{equation}\label{eq:rgo_as_prox}
\begin{aligned}
    \pi^{X\mid Y=y}
    &= \argmin_{\rho^X \in \mc P_2(\R^d)} \Bigl\{H_{\pi^{X}}(\rho^X) + \frac{1}{2\eta} \int \|x-y\|^2\, d\rho^X(x)\Bigr\} \\
    &= \argmin_{\rho^X \in \mc P_2(\R^d)}\Bigl\{ H_{\pi^{X}}(\rho^X) + \frac{1}{2\eta} \,W_2^2(\rho^X,\delta_y)\Bigr\}\,.
    \end{aligned}
\end{equation}
Thus, by replacing the Euclidean distance by the Wasserstein distance, $\pi^{X\mid Y=y} = \prox_{\eta \, H_{\pi^{X}}}(\delta_y)$. We use this fact in Section~\ref{scn:pf_slc} to provide a new proof of the contraction of the proximal sampler under strong log-concavity (Theorem~\ref{Thm:SLC}). The proximal operator over the Wasserstein space is also known as the JKO scheme~\citep{JorKinOtt98}, which we describe further in the next section.

\subsubsection{Proximal sampler as entropy-regularized JKO scheme}
\label{Sec:ProxLimitIntro}

The Wasserstein gradient flow models the steepest descent dynamics of a functional $F$ over the space of probability distributions with respect to the $2$-Wasserstein distance $W_2$. One strategy to approximate the Wasserstein gradient flow in discrete time is the JKO scheme \citep{JorKinOtt98}, which follows the iterations
    \begin{equation}\label{eq:JKO}
        \mu_{k+1} = \argmin_{\mu\in\mc P_2(\R^d)} {\Bigl\{F(\mu) + \frac{1}{2\eta}\, W_2^2(\mu_k, \mu)\Bigr\}}\,,
    \end{equation}
where $\eta>0$ is the step size. Note that this is a Wasserstein analogue of the proximal point method. A variant of the JKO scheme with an extra entropic regularization term was developed in \citet{peyre2015entropic} to improve the computational efficiency. In this entropy-regularized Wasserstein gradient flow algorithm, one instead follows the update
    \begin{equation}\label{eq:JKO_entropy}
        \mu_{k+1} = \argmin_{\mu\in\mc P_2(\R^d)}{\Bigl\{F(\mu) + \frac{1}{2\eta} \,W_{2,\epsilon}^2(\mu_k, \mu)\Bigr\}}\,,
    \end{equation}
where $W_{2,\epsilon}$ is the entropy-regularized $2$-Wasserstein distance
defined as
	\begin{equation}\label{eq:entropyW}
		W_{2,\epsilon}^2(\mu,\nu) := \min_{\gamma \in \eu C(\mu,\nu)}{\Bigl\{\int \|x-y\|^2 \, d\gamma(x,y) + \epsilon H(\gamma)\Bigr\}}\,,
	\end{equation}
where $H(\gamma) = \int \gamma\log \gamma$ denotes the negative entropy.

We show that proximal sampler can be viewed as an entropy-regularized JKO scheme in the following result. 
\begin{theorem}\label{thm:JKO}
Let $\rho_k^X, \rho_k^Y, \rho_{k+1}^X$ be the distributions of $x_k, y_k, x_{k+1}$, respectively, in one iteration of the proximal sampler algorithm.
Then, they follow the entropy-regularized JKO scheme
    \begin{equation}\label{eq:forwardJKO}
        \rho_k^Y = \argmin_{\mu\in\mc P_2(\R^d)} \frac{1}{2\eta} \,W_{2,2\eta}^2(\rho_k^X, \mu)\,,
    \end{equation}
and 
    \begin{equation}\label{eq:backwardJKO}
        \rho_{k+1}^X = \argmin_{\mu\in\mc P_2(\R^d)}{\Bigl\{ \int f \, d\mu + \frac{1}{2\eta} \,W_{2,2\eta}^2 (\rho_k^Y,\mu)\Bigr\}}\,.
    \end{equation}
\end{theorem}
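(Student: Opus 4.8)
The plan is to verify each of the two variational characterizations \eqref{eq:forwardJKO} and \eqref{eq:backwardJKO} directly, by identifying the unique minimizer of the relevant functional and checking that it coincides with the appropriate marginal of one step of the proximal sampler. The key observation is that the entropy-regularized Wasserstein distance $W_{2,\epsilon}^2(\mu,\cdot)$ is, up to additive constants depending only on $\mu$, an entropy-type functional: expanding $\int \norm{x-y}^2 \, d\gamma + \epsilon H(\gamma)$ over couplings $\gamma \in \eu C(\mu,\nu)$ and using the disintegration $\gamma(dx,dy) = \mu(dx)\,\gamma^{Y\mid X}(dy \mid x)$, the minimization over the conditional $\gamma^{Y\mid X}$ is the classical Gibbs variational principle, whose optimizer is the Gaussian kernel $\gamma^{Y\mid X=x}(dy) \propto \exp(-\norm{x-y}^2/\epsilon)$. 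With $\epsilon = 2\eta$, this kernel is exactly $\N(x,\eta I) = \pi^{Y\mid X}(\cdot\mid x)$, which is the first step of the proximal sampler.

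For \eqref{eq:forwardJKO}, I would argue as follows. Fix $\mu_0 = \rho_k^X$. For any target marginal $\mu$, minimizing $\int\norm{x-y}^2\,d\gamma + 2\eta\, H(\gamma)$ over $\gamma \in \eu C(\rho_k^X,\mu)$ and then over $\mu$ is the same as dropping the marginal constraint on the $y$-variable entirely; by the Gibbs principle the joint optimizer is $\gamma^\star(dx,dy) = \rho_k^X(dx)\,\N(x,\eta I)(dy)$, whose $Y$-marginal is precisely $\rho_k^X \ast \N(0,\eta I) = \rho_k^Y$. Since the objective $\frac{1}{2\eta} W_{2,2\eta}^2(\rho_k^X,\mu)$ is strictly convex in $\mu$ (it is an infimal projection of a strictly convex functional, or one can appeal to strict convexity of relative entropy), the minimizer is unique and equals $\rho_k^Y$. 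This step is essentially a computation and I expect it to be routine.

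For \eqref{eq:backwardJKO}, set $\mu_0 = \rho_k^Y$ and consider $G(\mu) := \int f \, d\mu + \frac{1}{2\eta} W_{2,2\eta}^2(\rho_k^Y,\mu)$. Writing out $W_{2,2\eta}^2(\rho_k^Y,\mu)$ via a coupling $\gamma(dy,dx) = \mu(dx)\,\gamma^{Y\mid X}(dy\mid x)$ and again optimizing the conditional by the Gibbs principle, one finds that $\frac{1}{2\eta} W_{2,2\eta}^2(\rho_k^Y,\mu)$ equals $H_{\rho_k^Y}(\mu)$ plus a term $-\int \log(\pi^Y/\rho_k^Y)\cdot(\text{something})$ --- more cleanly, using $\pi^Y = \pi^X \ast \N(0,\eta I)$ together with the heat-kernel normalization, the quantity $\frac{1}{2\eta}W_{2,2\eta}^2(\rho_k^Y,\mu) + \int f\,d\mu$ reduces to $H(\mu \mid \widehat\pi) + \text{const}$ for an explicit reference measure $\widehat\pi$, and the unique minimizer of a relative-entropy functional is that reference measure. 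Carrying out the bookkeeping, $\widehat\pi$ turns out to be exactly the $X$-marginal of the joint distribution with $Y$-marginal $\rho_k^Y$ and conditional $\pi^{X\mid Y}$, i.e.\ $\int \pi^{X\mid Y}(\cdot\mid y)\,\rho_k^Y(dy) = \rho_{k+1}^X$ --- which is the second step of the proximal sampler. The main obstacle is precisely this last identification: one must track the constants from the Gaussian normalization and from the identity $\pi^{X\mid Y=y}(dx) \propto_x \exp(-\frac{1}{2\eta}\norm{x-y}^2)\,\pi^X(dx)$ carefully enough to see that the $y$-dependent constant $C(y)$ integrates away against $\rho_k^Y$ and what remains is genuinely $H(\cdot \mid \rho_{k+1}^X)$ up to an additive constant. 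I would handle this by mimicking the computation already used in \eqref{eq:rgo_as_prox}, integrating that pointwise identity against $\rho_k^Y(dy)$ and using Bayes' rule $\rho_k^Y(dy)\,\pi^{X\mid Y}(dx\mid y) = \rho_{k+1}^X(dx)\,(\text{posterior } y\text{-kernel})$, after which the Gibbs principle again yields uniqueness of the minimizer.
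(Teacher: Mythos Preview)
Your treatment of the forward step \eqref{eq:forwardJKO} is correct and essentially identical to the paper's: you observe that minimizing over $\gamma\in\eu C(\rho_k^X,\mu)$ and then over $\mu$ amounts to minimizing over all joint laws with first marginal $\rho_k^X$, and then the Gibbs principle yields $\gamma^\star(dx,dy)=\rho_k^X(dx)\,\mc N(x,\eta I)(dy)$.

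For the backward step, however, your proposed route has a gap. You write $W_{2,2\eta}^2(\rho_k^Y,\mu)$ via $\gamma(dy,dx)=\mu(dx)\,\gamma^{Y\mid X}(dy\mid x)$ and then ``optimize the conditional by the Gibbs principle''. But in $W_{2,2\eta}^2(\rho_k^Y,\mu)$ \emph{both} marginals are fixed, so the minimization over $\gamma^{Y\mid X}$ is constrained by $\int \mu(dx)\,\gamma^{Y\mid X}(dy\mid x)=\rho_k^Y(dy)$; the unconstrained Gibbs solution $\gamma^{Y\mid X=x}=\mc N(x,\eta I)$ is not admissible unless $\mu$ happens to satisfy $\mu\ast\mc N(0,\eta I)=\rho_k^Y$. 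Consequently the identity ``$\frac{1}{2\eta}W_{2,2\eta}^2(\rho_k^Y,\mu)=H_{\rho_k^Y}(\mu)+\dotsb$'' that you are aiming for does not hold for general $\mu$, and the subsequent reduction to a relative-entropy minimization breaks down.

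The fix is simply to reuse the trick you already applied in the forward step. Since $\int f\,d\mu=\int f(x)\,d\gamma(x,y)$ for any $\gamma$ with $X$-marginal $\mu$, the joint objective in \eqref{eq:backwardJKO} becomes
\[
\min_{\mu}\;\min_{\gamma\in\eu C(\rho_k^Y,\mu)}\Bigl\{\int\bigl[f(x)+\tfrac{1}{2\eta}\norm{x-y}^2\bigr]\,d\gamma+H(\gamma)\Bigr\}
=\min_{\substack{\gamma\in\mc P_2(\R^d\times\R^d)\\ \gamma^Y=\rho_k^Y}}\Bigl\{\int\bigl[f(x)+\tfrac{1}{2\eta}\norm{x-y}^2\bigr]\,d\gamma+H(\gamma)\Bigr\},
\]
i.e.\ the $X$-marginal constraint disappears. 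Now the Gibbs principle applies directly and gives $\gamma(x,y)\propto \rho_k^Y(y)\exp\bigl(-f(x)-\tfrac{1}{2\eta}\norm{x-y}^2\bigr)$, whose $X$-marginal is $\int\pi^{X\mid Y}(\cdot\mid y)\,\rho_k^Y(dy)=\rho_{k+1}^X$. This is exactly the paper's argument, and it avoids the constant-tracking and Bayes-rule bookkeeping you anticipated needing.
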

\begin{proof}
Section~\ref{Sec:EntReg}.
\end{proof}

\subsubsection{Proximal point method as the limit of the proximal sampler}
\label{Sec:ProxLimitIntro2}

The interpretation of the proximal sampler algorithm above provides some insights on its connections to optimization. 
We can define a more general family of proximal sampler algorithm with a different level of entropy regularization. 
The forward step is
	\begin{equation}\label{eq:forwardJKOgeneral}
		\rho_k^Y = \argmin_{\mu\in\mc P_2(\R^d)} \frac{1}{2\eta} \,W_{2,2\eta \epsilon}^2(\rho_k^X, \mu)\,,
	\end{equation}
and the backward step reads
	\begin{equation}\label{eq:backwardJKOgeneral}
		\rho_{k+1}^X = \argmin_{\mu\in\mc P_2(\R^d)}{\Bigl\{ \int f \, d\mu + \frac{1}{2\eta} \,W_{2,2\eta \epsilon}^2 (\rho_k^Y,\mu)\Bigr\}}\,.
	\end{equation}

\begin{theorem}\label{Thm:Limit}
As $\epsilon\searrow 0$, \eqref{eq:forwardJKOgeneral}--\eqref{eq:backwardJKOgeneral} reduces to the proximal point algorithm in optimization. 
\end{theorem}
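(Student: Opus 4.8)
The plan is to pass to the limit $\epsilon \searrow 0$ directly in the variational problems \eqref{eq:forwardJKOgeneral}--\eqref{eq:backwardJKOgeneral}, using that the entropy-regularized transport cost degenerates to the ordinary squared Wasserstein distance, and then to recognize the limiting scheme as the proximal point method. Concretely, since $W_{2,2\eta\epsilon}^2(\mu,\nu) = \min_{\gamma\in\eu C(\mu,\nu)}\{\int\|x-y\|^2\,d\gamma + 2\eta\epsilon\,H(\gamma)\}$, the penalty $2\eta\epsilon\,H(\gamma)$ vanishes as $\epsilon\searrow 0$, so $W_{2,2\eta\epsilon}^2(\mu,\nu)\to W_2^2(\mu,\nu)$ and, along subsequences, the optimal entropic couplings converge weakly to an optimal transport plan; this is the standard $\Gamma$-convergence of entropic optimal transport to optimal transport. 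For the forward step there is an even cleaner description: as in the proof of Theorem~\ref{thm:JKO}, the minimizer of \eqref{eq:forwardJKOgeneral} is the Gaussian convolution $\rho_k^X\ast\mc N(0,\eta\epsilon I)$, which converges to $\rho_k^X$ (in $W_2$, say) as $\epsilon\searrow 0$. Hence in the limit the forward step degenerates to $\rho_k^Y=\rho_k^X$.

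For the backward step, the functionals $\mu\mapsto\int f\,d\mu+\frac{1}{2\eta}W_{2,2\eta\epsilon}^2(\rho_k^Y,\mu)$ $\Gamma$-converge to $\mu\mapsto\int f\,d\mu+\frac{1}{2\eta}W_2^2(\rho_k^Y,\mu)$; combined with equicoercivity, which follows from the quadratic growth of the transport term together with the finite-second-moment and growth-of-$f$ hypotheses, this yields convergence of the minimizers. Thus \eqref{eq:backwardJKOgeneral} reduces in the limit to $\rho_{k+1}^X=\argmin_{\mu\in\mc P_2(\R^d)}\{\int f\,d\mu+\frac{1}{2\eta}W_2^2(\rho_k^X,\mu)\}$, i.e.\ one step of the JKO scheme \eqref{eq:JKO} for the \emph{linear} functional $F(\mu)=\int f\,d\mu$. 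It remains to identify this with the proximal mapping. Fix $\sigma\in\mc P_2(\R^d)$; for any $\mu$ and any coupling $\gamma\in\eu C(\sigma,\mu)$ we have $\int f\,d\mu+\frac{1}{2\eta}\int\|x-y\|^2\,d\gamma=\int\bigl(f(y)+\tfrac{1}{2\eta}\|x-y\|^2\bigr)\,d\gamma(x,y)\ge\int\inf_{z}\bigl(f(z)+\tfrac{1}{2\eta}\|x-z\|^2\bigr)\,d\sigma(x)$, so the objective of the backward step is bounded below by this quantity for every $\mu$; and the bound is attained by $\mu_\star=(\prox_{\eta f})_\#\sigma$ with the coupling $(\mathrm{id},\prox_{\eta f})_\#\sigma$. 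Hence $\mu_\star=(\prox_{\eta f})_\#\sigma$ is a minimizer, and the unique one whenever $\prox_{\eta f}$ is single-valued (e.g.\ under the convexity or weak-convexity assumptions used elsewhere in the paper). Therefore the limiting scheme is $\rho_{k+1}^X=(\prox_{\eta f})_\#\rho_k^X$; initialized at a Dirac mass $\rho_0^X=\delta_{x_0}$, it produces $\rho_k^X=\delta_{x_k}$ with $x_{k+1}=\prox_{\eta f}(x_k)$, which is exactly the proximal point algorithm \eqref{eq:prox}.

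The main obstacle is the rigorous passage to the limit in the backward step: one must upgrade convergence of the transport \emph{values} to convergence of the \emph{minimizers}, which requires the $\Gamma$-convergence-plus-equicoercivity argument rather than a bare pointwise limit. A secondary subtlety is that $W_{2,2\eta\epsilon}^2$ is $+\infty$ whenever one marginal is a Dirac, so the $\epsilon>0$ iterates should be taken to be absolutely continuous and the specialization to Dirac masses performed only in the $\epsilon=0$ limit; equivalently, one may first establish that the limit is the Wasserstein proximal point method for $\int f\,d\mu$ and then observe that this restricts to the classical proximal point algorithm on Dirac masses.
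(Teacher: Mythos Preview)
Your proposal is correct, but it takes a more elaborate route than the paper. The paper does not pass to the limit in the variational problems via $\Gamma$-convergence; instead, it first solves \eqref{eq:forwardJKOgeneral}--\eqref{eq:backwardJKOgeneral} explicitly for each fixed $\epsilon>0$, exactly as in the proof of Theorem~\ref{thm:JKO}. This yields a concrete two-step sampling scheme: $y_k\sim\mc N(x_k,\epsilon\eta I)$ and $x_{k+1}\sim \pi_\epsilon^{X\mid Y=y_k}\propto\exp[-\tfrac{1}{\epsilon}(f(x)+\tfrac{1}{2\eta}\|x-y_k\|^2)]$. The limit $\epsilon\searrow 0$ is then taken in these explicit formulas: the Gaussian shrinks to a Dirac, so $y_k=x_k$, and by Laplace's principle the Gibbs measure concentrates on the minimizer, so $x_{k+1}=\prox_{\eta f}(y_k)$.

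Your approach instead lets $\epsilon\searrow 0$ directly at the level of the variational problems, invoking $\Gamma$-convergence of entropic OT to OT and then identifying the limiting (unregularized) JKO step for the linear functional $\int f\,d\mu$ as the pushforward by $\prox_{\eta f}$. This is valid and arguably more systematic---in particular, your pushforward identification cleanly handles non-Dirac initial measures, matching the paper's informal remark about ``parallel implementation''---but it requires more machinery (equicoercivity, $\Gamma$-limits of minimizers) and you yourself flag the technical wrinkle that $W_{2,2\eta\epsilon}^2$ blows up on Dirac marginals. The paper's route sidesteps all of this: once the $\epsilon$-scheme is written as an explicit sampler, the limit is a two-line Laplace argument, no compactness or variational convergence needed.
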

\begin{proof}
Section~\ref{Sec:ProxLimit}.
\end{proof}

Indeed, when $\epsilon = 0$, with $\rho_k^X = \delta_{x_k}$, we have
	\[
		\rho_k^Y = \rho_k^X = \delta_{x_k}
	\]
and furthermore, $\rho_{k+1}^X = \delta_{x_{k+1}}$ with 
	\[
		x_{k+1} = \argmin_{x\in\R^d}{\Bigl\{ f(x)+\frac{1}{2\eta}\,\|x-x_k\|^2\Bigr\}}\,.
	\]
This is exactly the proximal point method. In fact, even if $\rho_k^X$ is not a Dirac distribution, \eqref{eq:forwardJKOgeneral}--\eqref{eq:backwardJKOgeneral} with $\epsilon=0$ can be viewed as a parallel implementation of the proximal point method with many different initial points.
See Section~\ref{Sec:ProxLimit} for more discussion.

\subsection{Example: Gaussian case}
\label{Sec:Gaussian}

Suppose that the target distribution is a Gaussian $\N(0, \Sigma)$, i.e., $f(x) = \frac{1}{2}\,\langle x,\Sigma^{-1}x \rangle$.
In this case we can compute the iterations of the proximal sampler explicitly.

If we initialize the proximal sampler at 
\begin{align*}
    \rho^X_0 = \N(m_0, \Sigma_0)\,,
\end{align*}
then some calculations show that
\begin{align*}
    \rho^Y_k
    &= \N(m_k, \Sigma_k +\eta I)\,, \\
    \rho^X_{k+1}
    &= \N(m_{k+1}, \Sigma_{k+1})\,,
\end{align*}
where\footnote{
We can also notice that $m_{k+1} = \prox_{\eta f}(m_k),$ i.e., the means of the distributions follow the proximal point algorithm for $f$. Moreover, $m_k \to 0 = \argmin f$ which is the mean of the target distribution.}
\begin{align*}
    m_{k+1}
    &:= \Sigma\, {(\Sigma + \eta I)}^{-1}\, m_k\,, \\
    \Sigma_{k+1}
    &:= \Sigma \,{(\Sigma + \eta I)}^{-1} \,(\Sigma_k + \eta I)\,{(\Sigma + \eta I)}^{-1}\, \Sigma + \eta \,\Sigma \,{(\Sigma + \eta I)}^{-1}\,.
\end{align*}
Specializing to the case where $\Sigma = I$, $\eta = 1$, and we initialize at $\N(0, \sigma_0^2 I)$, we obtain
\begin{align}
\label{eq:gauss}
    \abs{\sigma_k^2 - 1}
    &= \frac{\abs{\sigma_0^2-1}}{4^k}\,.
\end{align}
In particular, this shows that the contraction factor $\frac{1}{{(1+\alpha\eta)}^2}$ in Theorem~\ref{Thm:LSI} is sharp.

\section{Conclusion and open directions}\label{scn:conclusion}

In this paper, we have studied in detail the proximal sampler of~\citet{LST21}. In particular, we have given new convergence proofs under weaker assumptions than what were previously considered, allowing for a much wider class of distributions beyond log-concavity.
In some cases, our proofs are inspired by convex optimization; in others, they show a remarkable parallel with the continuous-time theory of the Langevin diffusion under isoperimetry. Additionally, we have drawn more precise links between the proximal sampler and the proximal point method in optimization.

We conclude by listing a few directions for future study.
\begin{enumerate}
    \item Is there an extension of the theory we have developed to the problem of sampling from composite potentials $\pi^X \propto \exp(-f-g)$?
    \item Is there an accelerated version of the proximal sampler?
\end{enumerate}

\paragraph{Acknowledgments.} We would like to thank Ruoqi Shen and Kevin Tian for helpful conversations. YC was supported in part by grants NSF CAREER ECCS-1942523 and NSF CCF-2008513. SC was supported by the Department of Defense (DoD) through the National Defense Science \& Engineering Graduate Fellowship (NDSEG) Program. AS was supported by a Berkeley--Simons Research Fellowship. This work was done while the authors were visiting the Simons Institute for the Theory of Computing.

\appendix

\section{Proofs for the proximal sampler}\label{scn:pfs}

\subsection{Techniques}

At a high level, our proofs proceed by considering the change in KL divergence or R\'enyi divergence when we apply the following two operations to the law $\rho_k^X$ of the iterate and the target $\pi^X$: (1) we simultaneously evolve the two measures along the heat flow for time $\eta$, and then (2) we apply the RGO to the resulting measures.

For the first step, we formulate a remarkably general lemma in Section~\ref{scn:simultaneous_flow} which shows that the computation of the time derivative of \emph{any} $\phi$-divergence along the simultaneous heat flow is similar (in a precise sense) to the analogous computation when studying the continuous-time Langevin diffusion. It is this property that allows us to apply functional inequalities which are usually used for the Langevin diffusion, such as the Poincar\'e and log-Sobolev inequalities, in order to study the convergence of the proximal sampler.

In the second step, we are applying the same operation (of sampling from the RGO) to each measure, so the data-processing inequality implies that the KL divergence or R\'enyi divergence can only decrease. Combined with the previous step, it is sufficient to prove a convergence guarantee for the proximal sampler; however, the rate turns out to be suboptimal. In order to recover the optimal rate, we introduce an argument based on the \emph{Doob $h$-transform} (described in Section~\ref{scn:doob}) to obtain contraction in the second step as well, using the backward version of our general lemma (see Section~\ref{scn:simultaneous_backward_flow}).
We summarize our technique in Section~\ref{scn:general_strategy}.

\subsubsection{Lemma on the simultaneous heat flow}\label{scn:simultaneous_flow}

Let $\Phi_\pi$ be a $\phi$-divergence for some convex function $\phi$, i.e.\
\begin{align*}
    \Phi_\pi(\rho) := \E_\pi\bigl[\phi\bigl(\frac{\rho}{\pi}\bigr)\bigr]\,.
\end{align*}
We assume that $\phi$ is regular enough to justify the interchange of differentiation and integration and to perform integration by parts; this is satisfied for all of our applications.

We will use the following result in each forward step of the proximal sampler.
This is a generalization of~\citet[Lemma~16]{VW19}.

\begin{lemma}\label{Lem:SimFlow}
Let ${(\mu^X_t)}_{t\ge 0}$ be the law of the continuous-time Langevin diffusion with target distribution $\pi^X$, and define the \emph{dissipation functional} $D_{\pi^X}$ via the time derivative of $\Phi_{\pi^X}$ along the diffusion:
\begin{align*}
    D_{\pi^X}(\mu^X_t)
    &:= -\partial_t \Phi_{\pi^X}(\mu^X_t)
    = \E_{\mu_t^X} \Bigl\langle \nabla\bigl( \phi' \circ \frac{\mu_t^X}{\pi^X}\bigr), \nabla \log \frac{\mu_t^X}{\pi^X} \Bigr\rangle \,.
\end{align*}
If ${(\rho^X Q_t)}_{t\ge 0}$ and ${(\pi^X Q_t)}_{t\ge 0}$ evolve according to the simultaneous heat flow,
\begin{align*}
    \partial_t \rho^X Q_t
    &= \frac{1}{2} \, \Delta (\rho^X Q_t)\,, \qquad
    \partial_t \pi^X Q_t
    = \frac{1}{2} \, \Delta (\pi^X Q_t)\,,
\end{align*}
then
\begin{align*}
    \partial_t \Phi_{\pi^X Q_t}(\rho^X Q_t)
    &= -\frac{1}{2} \, D_{\pi^X Q_t}(\rho^X Q_t)\,.
\end{align*}
\end{lemma}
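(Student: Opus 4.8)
The plan is to differentiate $\Phi_{\pi^X Q_t}(\rho^X Q_t)$ directly, using the fact that both arguments evolve by the heat flow. Writing $\rho_t := \rho^X Q_t$ and $\pi_t := \pi^X Q_t$ for brevity, and $r_t := \rho_t/\pi_t$ for the density ratio, we have $\Phi_{\pi_t}(\rho_t) = \int \pi_t \, \phi(r_t)$. First I would compute $\partial_t$ under the integral sign (justified by the regularity assumption on $\phi$), obtaining
\begin{align*}
    \partial_t \int \pi_t \, \phi(r_t)
    &= \int (\partial_t \pi_t) \, \phi(r_t) + \int \pi_t \, \phi'(r_t) \, \partial_t r_t\,.
\end{align*}
Then I substitute $\partial_t \pi_t = \frac12 \Delta \pi_t$ and, from the quotient rule together with $\partial_t \rho_t = \frac12 \Delta \rho_t$, the identity $\partial_t r_t = \frac{1}{2\pi_t}(\Delta \rho_t - r_t \, \Delta \pi_t)$.

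The second step is to reorganize the resulting expression into something recognizable. A useful intermediate identity is that $\pi_t \, \partial_t r_t = \frac12(\Delta\rho_t - r_t\,\Delta\pi_t) = \frac12\,\nabla\cdot(\pi_t \nabla r_t) + \text{(cross terms)}$; more precisely one checks $\Delta \rho_t - r_t \Delta \pi_t = \nabla \cdot (\pi_t \nabla r_t) + \nabla r_t \cdot \nabla \pi_t$. I would then integrate by parts in the term $\int \pi_t \phi'(r_t) \partial_t r_t$, moving a derivative off the $\nabla\cdot(\pi_t \nabla r_t)$ piece, and also integrate by parts in $\int (\Delta \pi_t)\,\phi(r_t) = \int \pi_t \, \Delta(\phi(r_t))$-type rearrangements, so that the boundary terms vanish (again by the regularity assumption). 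The goal is to see all the bulk terms cancel except for a single quadratic-in-gradient term of the form $-\frac12 \int \pi_t \, \phi''(r_t) \, \norm{\nabla r_t}^2$, or equivalently $-\frac12 \int \pi_t \, \langle \nabla(\phi'(r_t)), \nabla r_t\rangle$.

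The third and final step is to identify this with $-\frac12 D_{\pi_t}(\rho_t)$. For this I recall that the Langevin diffusion with target $\pi_t$ has generator $\mc L_{\pi_t} g = \frac12 \Delta g - \frac12 \langle \nabla \log \pi_t, \nabla g\rangle$, so that $\partial_s|_{s=0}\Phi_{\pi_t}(\mu_s) = \int \pi_t \, \phi'(r_t) \, \mc L_{\pi_t} r_t$; integrating by parts against the reversible measure $\pi_t$ gives $-\frac12 \int \pi_t \,\langle \nabla(\phi'(r_t)), \nabla r_t \rangle$. Noting that $r_t = \mu_t^X/\pi^X$ plays the role of the density ratio in the definition of $D_{\pi^X}$, and that $\nabla r_t = \nabla(\mu_t^X/\pi^X) = (\mu_t^X/\pi^X)\,\nabla\log(\mu_t^X/\pi^X)$, this matches the stated formula for $D_{\pi^X Q_t}(\rho^X Q_t)$ verbatim.

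I expect the main obstacle to be the bookkeeping in the second step: making sure the several applications of integration by parts combine exactly so that the only surviving term is the good quadratic one, and confirming that all boundary terms genuinely vanish under the stated regularity. This is precisely where the analogy with~\citet[Lemma~16]{VW19} is useful as a sanity check — their computation is the special case where $\pi_t$ does not move, and here the extra terms coming from $\partial_t \pi_t = \frac12 \Delta \pi_t$ are exactly what conspire to replace the static Langevin generator by the time-dependent one $\mc L_{\pi_t}$. A secondary subtlety is keeping the factors of $\frac12$ consistent between the ``probabilist's'' heat semigroup $\partial_t = \frac12\Delta$ used here and the dissipation functional $D_{\pi^X}$, which is defined via the unit-speed Langevin diffusion; this accounts for the $\frac12$ prefactor in the conclusion.
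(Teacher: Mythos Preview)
Your proposal is correct and follows essentially the same approach as the paper: differentiate $\int \pi_t\,\phi(r_t)$ under the integral, substitute the heat equations for $\pi_t$ and $\rho_t$, integrate by parts, and watch the cross terms cancel to leave $-\tfrac12\int \pi_t\,\phi''(r_t)\,\norm{\nabla r_t}^2 = -\tfrac12\,D_{\pi_t}(\rho_t)$. The only cosmetic difference is that the paper rewrites $\Delta\rho_t = \divergence(\rho_t\nabla\log\rho_t)$ and $\Delta\pi_t = \divergence(\pi_t\nabla\log\pi_t)$ before integrating by parts, whereas you work directly with the identity $\Delta\rho_t - r_t\Delta\pi_t = \nabla\cdot(\pi_t\nabla r_t) + \nabla r_t\cdot\nabla\pi_t$; your route makes the cancellation of the $\nabla r_t\cdot\nabla\pi_t$ terms slightly more transparent, but the substance is identical. (One small slip: in your third step you wrote ``$r_t = \mu_t^X/\pi^X$'' when you meant $r_t = \rho_t/\pi_t$.)
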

\begin{proof}
    On one hand, we know that ${(\mu_t^X)}_{t\ge 0}$ satisfies the Fokker-Planck equation
    \begin{align*}
        \partial_t \mu_t^X
        &= \divergence\bigl(\mu_t^X \nabla \log \frac{\mu_t^X}{\pi^X}\bigr)
    \end{align*}
    so that
    \begin{align*}
        \partial_t \Phi_{\pi^X}(\mu_t^X)
        &= \int \phi'\bigl( \frac{\mu_t^X}{\pi^X}\bigr) \, \partial_t\mu_t^X
        = \int \phi'\bigl( \frac{\mu_t^X}{\pi^X}\bigr) \divergence\bigl(\mu_t^X \nabla \log \frac{\mu_t^X}{\pi^X}\bigr) \\
        &= -\int \Bigl\langle \nabla \bigl[ \phi'\bigl( \frac{\mu_t^X}{\pi^X}\bigr) \bigr], \nabla \log \frac{\mu_t^X}{\pi^X}\Bigr\rangle \, \mu_t^X\,.
    \end{align*}
    
    On the other hand, writing $\rho_t^X := \rho^X Q_t$ and $\pi_t^X := \pi^X Q_t$ for brevity, along the simultaneous heat flow we compute
    \begin{align*}
        2\,\partial_t \Phi_{\pi_t^X}(\rho_t^X)
        &= 2\int \phi'\bigl( \frac{\rho_t^X}{\pi_t^X}\bigr) \, \Bigl( \partial_t \rho_t^X - \frac{\rho_t^X}{\pi_t^X} \, \partial_t \pi_t^X \Bigr) + 2 \int \phi\bigl(\frac{\rho_t^X}{\pi_t^X}\bigr) \, \partial_t \pi_t^X \\
        &= \int \phi'\bigl( \frac{\rho_t^X}{\pi_t^X}\bigr) \, \Bigl( \divergence(\rho_t^X \nabla \log \rho_t^X) - \frac{\rho_t^X}{\pi_t^X} \divergence(\pi_t^X \nabla \log \pi_t^X) \Bigr) \\
        &\qquad\qquad\qquad{} + \int \phi\bigl(\frac{\rho_t^X}{\pi_t^X}\bigr) \divergence(\pi_t^X \nabla \log \pi_t^X) \\
        &= -\int \Bigl\langle \nabla\bigl[\phi'\bigl( \frac{\rho_t^X}{\pi_t^X}\bigr)\bigr], \nabla \log \rho_t^X \Bigr\rangle \, \rho_t^X + \int\Bigl\langle \nabla\bigl[\phi'\bigl( \frac{\rho_t^X}{\pi_t^X}\bigr) \, \frac{\rho_t^X}{\pi_t^X}\bigr], \nabla \log \pi_t^X\Bigr\rangle \, \pi_t^X \\
        &\qquad\qquad\qquad{} - \int \Bigl\langle \nabla\bigl[\phi\bigl(\frac{\rho_t^X}{\pi_t^X}\bigr)\bigr], \nabla \log \pi_t^X \Bigr\rangle \, \pi_t^X \\
        &= -\int \Bigl\langle \nabla\bigl[\phi'\bigl( \frac{\rho_t^X}{\pi_t^X}\bigr)\bigr], \nabla \log \frac{\rho_t^X}{\pi_t^X} \Bigr\rangle \, \rho_t^X + \int\Bigl\langle \nabla \frac{\rho_t^X}{\pi_t^X}, \nabla \log \pi_t^X\Bigr\rangle \, \phi'\bigl( \frac{\rho_t^X}{\pi_t^X}\bigr) \, \pi_t^X \\
        &\qquad\qquad\qquad{} - \int \Bigl\langle \nabla\frac{\rho_t^X}{\pi_t^X}, \nabla \log \pi_t^X \Bigr\rangle \,\phi'\bigl(\frac{\rho_t^X}{\pi_t^X}\bigr)\, \pi_t^X \\
        &= -D_{\pi_t^X}(\rho_t^X)\,. 
    \end{align*}
\end{proof}

\begin{remark}
    A similar statement holds if we replace the $\phi$-divergence $\Phi_\pi$ with any function $\psi \circ \Phi_\pi$ of the $\phi$-divergence.
    This allows us to cover the R\'enyi divergence introduced in Section~\ref{scn:background}.
\end{remark}

\subsubsection{Doob's $h$-transform}\label{scn:doob}

Doob's $h$-transform is a useful method to analyze the properties of a diffusion process conditioned on its value at some terminal time point. Consider a general diffusion process modeled by the stochastic differential equation (SDE)
    \begin{equation}\label{eq:SDEforward}
        dZ_t = b(t, Z_t)\, dt + \sigma(t, Z_t)\, dW_t\,, \qquad Z_0 \sim \mu_0\,,
    \end{equation}
where ${(W_t)}_{t\ge 0}$ denotes a standard Wiener process.
Assume that $b(t,z)$ and $\sigma(t,z)$ are piecewise continuous with respect to $t$ and Lipschitz continuous with respect to $z$ so that the above SDE~\eqref{eq:SDEforward} has a unique solution. The Doob $h$-transform characterizes the process conditional on its terminal value $Z_T$, summarized in the following lemma~\citep{SarSol19}.

\begin{lemma}\label{lem:doob_new}
Let ${(\hat Z_t)}_{0 \le t\le T}$ be the process~\eqref{eq:SDEforward} conditioned to satisfy $Z_T = z$. Then, the process satisfies the following SDE backwards in time: 
\begin{align*}
    d \hat Z_t
    &= [b(t, \hat Z_t) - \sigma(t,\hat Z_t)\, \sigma(t,\hat Z_t)^\T \,\nabla \log \mu_t(\hat Z_t)]\, dt + \sigma(t, \hat Z_t)\, dW_t\,,
\end{align*}
where $\mu_t$ is the marginal distribution of $Z_t$ in~\eqref{eq:SDEforward} and the SDE is started with $\hat Z_T = z$.

Equivalently, if we define the SDE 
\begin{equation}\label{eq:htransform}
    d \hat Z_t^-
    = [\yongxin{-}b(T-t, \hat Z_t^-) + \sigma(T-t,\hat Z_t^-)\, \sigma(T-t,\hat Z_t^-)^\T \,\nabla \log \mu_{T-t}(\hat Z_t^-)]\, dt + \sigma(T-t, \hat Z_t^-)\, dW_t\,,
\end{equation}
started at $\hat Z_0^- = z$, then at time $T$ the law of $\hat Z_T^-$ is the conditional distribution of $Z_0$ given $Z_T = z$.
\end{lemma}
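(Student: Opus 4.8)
The plan is to reduce the lemma to two standard facts: the classical time-reversal formula for diffusions, and the triviality of conditioning a Markov process on its initial coordinate. The two displayed SDEs describe the same process written in opposite time conventions, so it suffices to establish~\eqref{eq:htransform}. Throughout I assume enough regularity --- nondegeneracy of $\sigma$ on top of the stated Lipschitz/continuity hypotheses --- that~\eqref{eq:SDEforward} admits a smooth, strictly positive transition density $p(s,x;t,y)$; then the marginals $\mu_t$ are smooth and positive, and the regular conditional law of the whole path given $Z_T = z$ is well defined.

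\emph{Step 1: time-reverse the unconditioned process.} Set $\bar Z_u := Z_{T-u}$, $0 \le u \le T$. By the Haussmann--Pardoux / Anderson time-reversal theorem, $(\bar Z_u)_{0\le u\le T}$ is again an inhomogeneous diffusion, solving
\begin{align*}
    d\bar Z_u &= \bigl[-b(T-u,\bar Z_u) + \nabla\cdot(\sigma\sigma^\T)(T-u,\bar Z_u) \\
    &\qquad{} + (\sigma\sigma^\T)(T-u,\bar Z_u)\,\nabla\log\mu_{T-u}(\bar Z_u)\bigr]\,du + \sigma(T-u,\bar Z_u)\,dW_u\,,
\end{align*}
started from $\bar Z_0 = Z_T \sim \mu_T$. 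In all of our applications $\sigma$ is a constant multiple of the identity, so the correction $\nabla\cdot(\sigma\sigma^\T)$ vanishes and the drift is exactly $-b(T-u,\cdot) + (\sigma\sigma^\T)(T-u,\cdot)\,\nabla\log\mu_{T-u}$, matching the drift in~\eqref{eq:htransform}. The most self-contained way to verify this is to compute the generator of $\bar Z$ directly: the time-reversed transition density is $\bar p(u,a;u',b) = \mu_{T-u'}(b)\,p(T-u',b;T-u,a)/\mu_{T-u}(a)$ for $u < u'$, and differentiating $\int \bar p(u,a;u',b)\,f(b)\,db$ at $u' = u$, using the forward Kolmogorov equation for $\mu$ and the backward Kolmogorov equation for $p$ and then integrating by parts, the $\delta$-function limit $p(s,\cdot\,;s,a)$ produces precisely the $\nabla\log\mu$ drift (together with the $\nabla\cdot(\sigma\sigma^\T)$ term).

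\emph{Step 2: condition, and read off the endpoints.} Because $(\bar Z_u)_{0\le u\le T}$ is Markov, conditioning on the value of its initial coordinate is the same as fixing the initial condition: the conditional law of $(\bar Z_u)_{0 < u \le T}$ given $\bar Z_0 = z$ --- equivalently, given $Z_T = z$ --- is the law of the solution of the same SDE started deterministically at $\hat Z_0^- = z$, which is exactly~\eqref{eq:htransform}. Its terminal value is $\hat Z_T^- = \bar Z_T = Z_0$ conditioned on $Z_T = z$, giving the last assertion of the lemma. The first displayed SDE is then just~\eqref{eq:htransform} rewritten in the original time index: with $\hat Z_t := \hat Z_{T-t}^-$ (so that $\hat Z_T = z$ and $\hat Z_0 \sim \operatorname{Law}(Z_0 \mid Z_T = z)$), the substitution $u = T - t$ turns the forward drift $-b(T-u,\cdot) + \sigma\sigma^\T\nabla\log\mu_{T-u}$ into the ``backward'' drift $b(t,\cdot) - \sigma\sigma^\T\nabla\log\mu_t$, which is the content of the first display.

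\emph{Consistency check and main obstacle.} As a cross-check --- and as an alternative route that avoids invoking the reversal theorem --- one can instead build the conditioned process via Doob's $h$-transform: with $h_t(x) := p(t,x;T,z)$, which solves the backward Kolmogorov equation, the process conditioned on $Z_T = z$ is the \emph{forward} diffusion $d\hat Z_t = [\,b(t,\hat Z_t) + (\sigma\sigma^\T\nabla\log h_t)(\hat Z_t)\,]\,dt + \sigma(t,\hat Z_t)\,dW_t$, with marginals $\hat\mu_t \propto \mu_t\,h_t$; time-reversing \emph{this} SDE and using $\nabla\log\hat\mu_t = \nabla\log\mu_t + \nabla\log h_t$, the two $\nabla\log h_t$ terms cancel and one recovers~\eqref{eq:htransform} again. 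I expect the only genuine obstacle to be the technical regularity and integrability conditions needed to make the time-reversal step rigorous: smoothness and strict positivity of $p$ and of $\mu_t$, plus an integrability bound of the type $\int_0^T\!\!\int(\norm{b}^2 + \norm{\sigma\sigma^\T\nabla\log\mu_t}^2)\,d\mu_t\,dt < \infty$. One should also note that the reversed SDE has a drift (through $\nabla\log\mu_t$) that need not be globally Lipschitz, so $\bar Z$ is best regarded as the process obtained by pathwise time-reversal, of which~\eqref{eq:htransform} is a weak form, rather than as an a priori solution of~\eqref{eq:htransform}. All of these hold under the stated hypotheses by standard parabolic estimates, and we defer the details to~\citet{SarSol19} and to the time-reversal literature.
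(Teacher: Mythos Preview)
The paper does not prove this lemma at all: it is stated as a known fact and attributed to~\citet{SarSol19}, with no argument given. So there is no ``paper's own proof'' to compare against.

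Your proposal is a correct and standard derivation. Reducing the claim to the Haussmann--Pardoux time-reversal formula and then observing that conditioning a Markov process on its \emph{initial} value is just fixing the initial condition is exactly the right decomposition, and your consistency check via the forward Doob $h$-transform (with $h_t = p(t,\cdot;T,z)$) followed by a second time-reversal is a nice sanity check that the two routes agree. Your caveats about the $\nabla\cdot(\sigma\sigma^\T)$ correction term are appropriate: the lemma as stated in the paper silently drops that term, which is harmless in the applications (Brownian motion, $\sigma$ constant) but not in the generality of the lemma's hypotheses. You are also right to flag the regularity issues; since the paper only ever invokes the lemma for Brownian motion started from a smooth density, those concerns are moot in context, and deferring them to the cited reference is in the spirit of the paper.
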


\subsubsection{Lemma on the simultaneous backward heat flow}\label{scn:simultaneous_backward_flow}

We present the following backward version of Lemma~\ref{Lem:SimFlow}, which we use in each backward step of the proximal sampler.
We assume the same set up as in Lemma~\ref{Lem:SimFlow}:
Let $\Phi_\pi(\rho) = \E_\pi[\phi(\frac{\rho}{\pi})]$ be a $\phi$-divergence for some convex function $\phi$, i.e.\
\begin{align*}
    \Phi_\pi(\rho) := \E_\pi\bigl[\phi\bigl(\frac{\rho}{\pi}\bigr)\bigr]
\end{align*}
and let
\begin{align*}
    D_{\pi}(\rho)
    = \E_{\rho} \Bigl\langle \nabla\bigl( \phi' \circ \frac{\rho}{\pi}\bigr), \nabla \log \frac{\rho}{\pi} \Bigr\rangle \,
\end{align*}
so that $D_{\pi}$ is the dissipation of $\Phi_\pi$ along the Langevin dynamics with target $\pi$.

\begin{lemma}\label{lem:simflow2_new}
    Let $\pi^X$ be a probability distribution and let $\pi(x,y) = \pi^X(x) \, \N(y; x, \eta I)$ be a joint density for $(X,Y)$ with $Y$ obtained from $X$ by running the heat flow for time $\eta$. Let $\pi^{X\mid Y}$ be the conditional distribution of $X$ given $Y$ under $\pi$, and let $\pi^Y$ denote the marginal distribution of $Y$. Then, for each $t\in [0,\eta]$, there exists a channel $Q_t^-$ that maps probability measures to probability measures, with the following properties: (1) $Q_0^-$ is the identity channel; (2) $Q_\eta^-$ maps a probability measure $\rho^Y$ to the the measure $\rho^Y Q_\eta^-(x) = \int \pi^{X\mid Y}(x\mid y) \,\rho^Y(dy)$; (3) for every $t$, $\pi^Y Q_t^- = \pi \ast \N(0, (\eta - t) I)$; and (4) for every $\rho^Y$,
    \begin{align*}
        \partial_t \Phi_{\pi^Y Q_t^-}(\rho^Y Q_t^-)
        = - \frac{1}{2} \, D_{\pi^Y Q_t^-}(\rho^Y Q_t^-)\,.
    \end{align*}
\end{lemma}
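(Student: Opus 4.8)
The plan is to construct the channel $Q_t^-$ via Doob's $h$-transform applied to the heat flow that transports $\pi^X$ to $\pi^Y$, and then to verify the four claimed properties by reducing property (4) to the forward statement of Lemma~\ref{Lem:SimFlow}. Concretely, recall that $\pi^Y = \pi^X \ast \N(0,\eta I)$ is obtained by running the heat flow $dZ_s = dW_s$ (with the convention $\partial_s \mu_s = \frac12 \Delta \mu_s$, so really $dZ_s = dW_s$ up to the time normalization used in the paper) for time $\eta$, started from $Z_0 \sim \pi^X$. Conditioning this diffusion on its \emph{initial} value and reading it backward in time is exactly the content of Lemma~\ref{lem:doob_new}: the reversed process, started at $\hat Z_0^- = y \sim \rho^Y$, is a diffusion with a drift involving $\nabla \log \mu_{\eta - t}$ where $\mu_s$ is the law of $Z_s$ under the \emph{forward} heat flow from $\pi^X$, i.e. $\mu_s = \pi^X \ast \N(0,sI) = \pi^X Q_s$ in the notation of Lemma~\ref{Lem:SimFlow}. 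I would define $Q_t^-$ to be the Markov channel sending $\rho^Y$ to the law of $\hat Z_t^-$ at time $t$. Property (1) is immediate since $\hat Z_0^- = y$. Property (2) is precisely the defining property of Doob's $h$-transform in Lemma~\ref{lem:doob_new}: at time $t = \eta$, the law of $\hat Z_\eta^-$ is the conditional law of $Z_0$ given $Z_\eta = y$, which by Bayes' rule with $\pi^{Y\mid X}(\cdot \mid x) = \N(x,\eta I)$ is exactly $\pi^{X\mid Y}(\cdot \mid y)$; integrating against $\rho^Y$ gives the stated formula.

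For property (3), I would observe that $\pi^Y Q_t^-$ is the marginal at backward-time $t$ of the Doob-transformed process when it is \emph{initialized from the correct marginal} $\pi^Y = \mu_\eta$; in that case the $h$-transform simply reconstructs the forward heat-flow bridge, so the time-$t$ marginal of the reversed process is $\mu_{\eta - t} = \pi^X \ast \N(0, (\eta-t)I)$. (I note the statement writes ``$\pi \ast \N(0,(\eta-t)I)$'' where $\pi^X$ is meant.) The key point that makes property (4) work is that, along the backward evolution, the pair $(\rho^Y Q_t^-, \pi^Y Q_t^-)$ evolves according to a \emph{time-inhomogeneous} Fokker--Planck equation whose drift is the \emph{same} score term $\sigma\sigma^\T \nabla \log \mu_{\eta-t}$ for both measures (the $h$-transform drift depends only on the reference marginal, not on which initial law we push through). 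Writing $\nu_t := \rho^Y Q_t^-$ and $\varpi_t := \pi^Y Q_t^-$, both satisfy $\partial_t \mu = \frac12 \Delta \mu - \divergence(\mu\, v_t)$ with a common vector field $v_t = \nabla \log \varpi_t$ (up to the factor from $\sigma\sigma^\T = I$); this is \emph{structurally identical} to the simultaneous-heat-flow setup of Lemma~\ref{Lem:SimFlow} after a change of variables, because adding a common transport term $-\divergence(\mu v_t)$ to both $\partial_t \nu$ and $\partial_t \varpi$ leaves the ratio $\nu_t/\varpi_t$ transported along the flow of $v_t$ and hence does not change $\partial_t \Phi_{\varpi_t}(\nu_t)$ beyond the pure-diffusion contribution. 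Carrying out the same integration-by-parts computation as in the proof of Lemma~\ref{Lem:SimFlow}, the transport terms cancel in pairs exactly as the $\nabla \log \pi_t^X$ terms did there, leaving $\partial_t \Phi_{\varpi_t}(\nu_t) = -\frac12 D_{\varpi_t}(\nu_t)$.

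The main obstacle, and the step I would be most careful about, is making the reduction in property (4) rigorous: I need to justify that $\rho^Y Q_t^-$ genuinely solves the claimed PDE (the Doob $h$-transform is usually stated at the level of SDEs/path measures, and one must pass to the Fokker--Planck equation for arbitrary — possibly non-smooth — initial $\rho^Y$, which may require an approximation/regularization argument), and that the integrations by parts are licensed under the blanket regularity assumption on $\phi$ stated before Lemma~\ref{Lem:SimFlow}. A clean way to organize this is: (i) prove the identity first for the pure heat semigroup reversed from $\pi^Y$ so that all marginals $\varpi_t = \pi^X \ast \N(0,(\eta-t)I)$ are smooth and strictly positive with controlled tails; (ii) write the general-$\rho^Y$ reversed dynamics as this reference flow plus the common drift $v_t$; (iii) verify term-by-term that the extra drift contributes $\int \phi'(\nu_t/\varpi_t)\,[-\divergence(\nu_t v_t) + (\nu_t/\varpi_t)\divergence(\varpi_t v_t)] + \int \phi(\nu_t/\varpi_t)\,\divergence(\varpi_t v_t)$, which integrates by parts to zero just as in the displayed computation in the proof of Lemma~\ref{Lem:SimFlow}. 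Everything else is bookkeeping, and the remark following Lemma~\ref{Lem:SimFlow} lets us upgrade from $\phi$-divergences to R\'enyi divergences with no additional work.
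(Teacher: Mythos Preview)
Your proposal is correct and follows essentially the same approach as the paper: the channels $Q_t^-$ are constructed via the Doob $h$-transform of the Brownian motion started at $\pi^X$ (so properties (1)--(3) follow from Lemma~\ref{lem:doob_new} exactly as you say), and property (4) is verified by integration by parts in the Fokker--Planck equations of the reversed diffusion. The only organizational difference is in how the computation for (4) is decomposed: the paper rewrites $\partial_t \pi_t^-$ as a pure \emph{backward} heat equation $-\tfrac12\Delta\pi_t^-$ and $\partial_t \rho_t^-$ as ``Langevin toward $\pi_t^-$'' minus $\tfrac12\Delta\rho_t^-$, then recognizes the backward-heat piece as the negative of the Lemma~\ref{Lem:SimFlow} computation (contributing $+D$) and the Langevin piece as the full dissipation (contributing $-2D$), netting $-D$; you instead keep both equations in the symmetric form $\partial_t\mu = \tfrac12\Delta\mu - \divergence(\mu\,v_t)$ with the common drift $v_t=\nabla\log\varpi_t$ and argue that the shared transport term contributes nothing to $\partial_t\Phi_{\varpi_t}(\nu_t)$, reducing directly to Lemma~\ref{Lem:SimFlow}. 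The two bookkeepings are equivalent, and your ``common drift cancels'' framing is arguably the cleaner way to see why the backward step mirrors the forward one.
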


The channel is obtained from the Doob $h$-transform.
\medskip{}

\begin{proof}
    Let $\pi_t := \pi^X \ast \N(0, tI)$. We define $Q_t^-$ as follows: given $\rho^Y$, we set $\rho^Y Q_t^-$ to be the law at time $t$ of the SDE
    \begin{align}\label{eq:diffusion_in_simul_backwards}
        d \hat Z_t^-
        &= \nabla \log \pi_{\eta-t}(\hat Z_t^-)\, dt + dW_t\,,
    \end{align}
    started at $\hat Z_0^- \sim \rho^Y$. According to Lemma~\ref{lem:doob_new} applied to the Brownian motion process (started at $\pi^X$), the channels ${(Q_t^-)}_{0\le t \le \eta}$ satisfy properties (1), (2), and (3). It remains to verify (4).
    In the proof, we write $\pi_t^- := \pi^Y Q_t^-$ and $\rho_t^- := \rho^Y Q_t^-$ for brevity.
    Note that $\pi_{\eta-t} = \pi_t^-$ by construction, and we have the Fokker-Planck equations:
    \begin{align*}
        \partial_t \pi_t^-
        &= -\divergence(\pi_t^- \nabla \log \pi_t^-) + \frac{1}{2} \, \Delta \pi_t^- = -\frac{1}{2} \, \Delta \pi_t^-\,, \\
        \partial_t \rho_t^-
        &= -\divergence(\rho_t^- \nabla \log \pi_t^-) + \frac{1}{2} \, \Delta \rho_t^-
        = \divergence\bigl(\rho_t^- \nabla \log \frac{\rho_t^-}{\pi_t^-}\bigr) - \frac{1}{2} \, \Delta \rho_t^-\,.
    \end{align*}
    Hence,
    \begin{align*}
        2\,\partial_t \Phi_{\pi_t^-}(\rho_t^-)
        &= 2\int \phi'\bigl( \frac{\rho_t^-}{\pi_t^-}\bigr) \, \Bigl( \partial_t \rho_t^- - \frac{\rho_t^-}{\pi_t^-} \, \partial_t \pi_t^- \Bigr) + 2 \int \phi\bigl(\frac{\rho_t^-}{\pi_t^-}\bigr) \, \partial_t \pi_t^- \\
        &= \int \phi'\bigl( \frac{\rho_t^-}{\pi_t^-}\bigr) \, \Bigl(
        2\divergence\bigl(\rho_t^- \nabla \log \frac{\rho_t^-}{\pi_t^-} \bigr)
        - \Delta \rho_t^- + \frac{\rho_t^-}{\pi_t^-}\, \Delta \pi_t^- \Bigr) - \int \phi\bigl(\frac{\rho_t^-}{\pi_t^-}\bigr) \,\Delta \pi_t^- \\
        &= 2 \int \phi'\bigl( \frac{\rho_t^-}{\pi_t^-}\bigr)
        \divergence\bigl(\rho_t^- \nabla \log \frac{\rho_t^-}{\pi_t^-} \bigr)   \\
        &\qquad{} - {\underbrace{\int \phi'\bigl( \frac{\rho_t^-}{\pi_t^-}\bigr) \, \Bigl(
        \Delta \rho_t^- - \frac{\rho_t^-}{\pi_t^-}\, \Delta \pi_t^- \Bigr) + \int \phi\bigl(\frac{\rho_t^-}{\pi_t^-}\bigr) \,\Delta  \pi_t^-}_{= -D_{\pi_t^-}(\rho_t^-) ~ \text{ by Lemma~\ref{Lem:SimFlow}}}} \\
        &= -2 \int \Bigl\langle \nabla\bigl[\phi'\bigl( \frac{\rho_t^-}{\pi_t^-}\bigr)\bigr], \nabla \log \frac{\rho_t^-}{\pi_t^-} \Bigr\rangle \, \rho_t^- + D_{\pi_t^-}(\rho_t^-) \\
        &= -2D_{\pi_t^-}(\rho_t^-) + D_{\pi_t^-}(\rho_t^-)
        = -D_{\pi_t^-}(\rho_t^-)\,. 
    \end{align*}
\end{proof}

\subsubsection{General strategy of the proofs}\label{scn:general_strategy}

Suppose that we want to understand the change in the $\phi$-divergence $\Phi_{\pi^X}(\rho^X_1)$ after one iteration of the proximal sampler, compared to the $\phi$ divergence $\Phi_{\pi^X}(\rho^X_0)$ at initialization. We split the analysis into two steps.
\begin{enumerate}
    \item {\bf Forward step:}
    In the first step, we draw $y_0 \mid x_0 \sim \pi^{Y \mid X=x_0} = \N(x_0, \eta I)$. 

    This creates a joint distribution $\rho_0(x,y) = \rho_0^X(x) \,  \N(y;x,\eta I)$
    with the correct conditionals: $\rho_0^{Y\mid X} = \pi^{Y \mid X}$.
    Therefore, the $\phi$-divergence of the joint distribution is equal to the initial $\phi$-divergence of the $X$-marginal:
    $\Phi_{\pi}(\rho_0) = \Phi_{\pi^X}(\rho_0^X)$.

    Consider the $Y$-marginal $y_0 \sim \rho_0^Y$.
    Observe that $\rho_0^Y = \rho_0^X \ast \N(0, \eta I)$ is the output $\rho_0^Y = \tilde \rho_\eta$ of the heat flow $\partial_t \tilde \rho_t  = \frac{1}{2} \Delta \tilde \rho_t$ at time $t = \eta$ starting from $\tilde \rho_0 = \rho_0^X$.
    We denote this by $\rho_0^Y = \rho_0^X Q_\eta$, where $(Q_t)_{t \ge 0}$ denotes the heat semigroup.
    
    Similarly, we can write the $Y$-marginal of the target as $\pi^Y = \pi^X \ast \N(0, \eta I) = \pi^X Q_\eta$.
    
    In particular, ${(\rho_0^X Q_t)}_{t\ge 0}$ and ${(\pi^X Q_t)}_{t\ge 0}$ evolve following the simultaneous heat flow.
    
    By Lemma~\ref{Lem:SimFlow}, along the simultaneous heat flow, 
    \begin{align*}
       \partial_t \Phi_{\pi^X Q_t}(\rho_0^X Q_t) = -\frac{1}{2}\, D_{\pi^X Q_t}(\rho_0^X Q_t)
   \end{align*}
   where $D_\cdot(\cdot)$ denotes the dissipation functional for the $\phi$-divergence along the Langevin dynamics. Hence, a lower bound on $D_{\pi^X Q_t}(\rho_0^X Q_t)$ leads to an upper bound on
   \begin{align*}
       \Phi_{\pi^Y}(\rho_0^Y) - \Phi_{\pi^X}(\rho_0^X)
       = \Phi_{\pi^X Q_\eta}(\rho_0^X Q_\eta) - \Phi_{\pi^X}(\rho_0^X)\,.
   \end{align*}
   \item \textbf{Backward step:} In the second step, we draw $x_1 \mid y_0 \sim \pi^{X \mid Y=y_0}$.

    This time, we consider the backward heat flow and apply Lemma~\ref{lem:simflow2_new}, which yields the Doob channels ${(Q_t^-)}_{0\le t\le\eta}$ with $\rho_1^X = \rho_0^Y Q_\eta^-$ and $\pi^X = \pi^Y Q_\eta^-$.
    Lemma~\ref{lem:simflow2_new} implies that
    \begin{align*}
       \partial_t \Phi_{\pi^Y Q_t^-}(\rho_0^Y Q_t^-) = -\frac{1}{2}\, D_{\pi^Y Q_t^-}(\rho_0^Y Q_t^-)\,.
   \end{align*}
   Observe that this is almost symmetric with the forward step!
   In particular, a lower bound on $D_{\pi^Y Q_t^-}(\rho_0^Y Q_t^-)$ leads to an upper bound on
   \begin{align*}
       \Phi_{\pi^X}(\rho_1^X) - \Phi_{\pi^Y}(\rho_0^Y)
       = \Phi_{\pi^Y Q_\eta^-}(\rho_0^Y Q_\eta^-) - \Phi_{\pi^Y}(\rho_0^Y)\,.
   \end{align*}
\end{enumerate}
Combining the two steps allows to understand each iteration of the proximal sampler.

\subsection{Convergence under strong log-concavity}\label{scn:pf_slc}

Suppose that $A$ is a set-valued mapping on $\R^d$ which is strongly monotone, in the sense that
\begin{align*}
    \langle A(x) - A(y), x-y \rangle \ge \alpha \, \norm{x-y}^2 \qquad\text{for all}~x,y\in\R^d\,.
\end{align*}
Suppose that $x' \in x - \eta A(x')$ and $y' \in y - \eta A(y')$. Then, by expanding out the square, one can easily show that $\norm{x'-y'}^2 \le \frac{1}{{(1+\alpha\eta)}^2} \, \norm{x-y}^2$. In particular, by applying this to the subdifferential $A = \partial f$, where $f$ is $\alpha$-strongly convex, one immediately obtains the fact that the proximal point algorithm is a $\frac{1}{1+\alpha\eta}$-contraction. In this section, we translate this proof to the sampling setting.

Recall from~\eqref{eq:rgo_as_prox} that $\pi^{X\mid Y=y} = \prox_{\eta F}(\delta_y)$, where $F = H_{\pi^{X}}$ is $\alpha$-geodesically strongly strongly convex~\citep[Equation 10.1.8]{ambrosio2008gradient}. 
Then, from the first-order optimality conditions on Wasserstein space~\citep[see][Lemma 10.1.2]{ambrosio2008gradient}, we have
\begin{equation}\label{eq:first_order_opt_prox}
0 \in \partial F(\pi^{X\mid Y=y}) + \frac{1}{\eta}\,({\id} - y)\,, \qquad \pi^{X\mid Y=y}\text{-a.s.},
\end{equation}
where $\partial F$ denotes the Wasserstein subdifferential of $F$.

\begin{proof}[Proof of Theorem~\ref{Thm:SLC}]
First, let $y, \bar y \in \R^d$.
Then, from~\eqref{eq:first_order_opt_prox}:
\begin{align}
    \id &\in y - \eta \, \partial F(\pi^{X\mid Y=y})\,, & \pi^{X|Y=y}\text{-a.s.} \label{eq:first_order_opt_1} \\
    \id &\in \bar y - \eta \, \partial F(\pi^{X\mid Y=\bar y})\,, & \pi^{X|Y=\bar y}\text{-a.s.} \label{eq:first_order_opt_2}
\end{align}
Let $T$ be the optimal transport map from $\pi^{X\mid Y=y}$ to $\pi^{X\mid Y=\bar y}$. We can rewrite~\eqref{eq:first_order_opt_2} as 
\begin{equation}\label{eq:first_order_opt_3}
    T \in \bar y - \eta \, \partial F(\pi^{X\mid Y=\bar y})\circ T\,, \qquad \pi^{X\mid Y=y}\text{-a.s.}
\end{equation}

We now abuse notation and write $\partial F(\pi^{X\mid Y=y})$ for an element of the subdifferential. Then, using~\eqref{eq:first_order_opt_1} and~\eqref{eq:first_order_opt_3}, $\pi^{X\mid Y=y}$-a.s.,
\begin{align*}
    \norm{T- {\id}}^2 = \norm{\bar y - y}^2
    &-2\eta\, \langle \partial F(\pi^{X\mid Y=\bar y}) \circ T - \partial F(\pi^{X\mid Y=y}), T-{\id} \rangle \\
    &- \eta^2 \, \|\partial F(\pi^{X\mid Y=\bar y}) \circ T - \partial F(\pi^{X\mid Y=y})\|^2\,.
\end{align*}
Integrating with respect to $\pi^{X\mid Y=y}$, and using the geodesic strong convexity of $F$~\citep[Equation 10.1.8]{ambrosio2008gradient}, 
\begin{align*}
    W^2(\pi^{X\mid Y=y},\pi^{X\mid Y=\bar y})
    &\leq \norm{y - \bar y}^2
    -2 \alpha \eta \, W^2(\pi^{X\mid Y=y},\pi^{X\mid Y=\bar y})   -\alpha^2 \eta^2 \, W^2(\pi^{X\mid Y=y},\pi^{X\mid Y=\bar y})\,.
\end{align*}
Therefore, 
$$W^2(\pi^{X\mid Y=y},\pi^{X\mid Y=\bar y}) \leq \frac{1}{{(1+\alpha \eta)}^2}\,\|y - \bar y\|^2.$$

The rest of the argument is concluded as in~\citet[Lemma 2]{LST21arxiv}. We provide the details here for completeness.
First, along the proximal sampler, we have $W_2(\rho_0^Y, \bar \rho_0^Y) \le W_2(\rho_0^X, \bar \rho_0^X)$ because the heat flow is a Wasserstein contraction (see Section~\ref{scn:general_strategy} for the notation). Next, let $\gamma$ denote an optimal coupling of $\rho_0^Y$ and $\bar\rho_0^Y$, and for all $y,y\in\R^d$ let $\gamma_{y,\bar y}$ denote an optimal coupling of $\pi^{X\mid Y=y}$ and $\pi^{X\mid Y=\bar y}$. We check that the measure $\hat \gamma(dx, d\bar x) := \gamma(dy, d\bar y) \, \gamma_{y,\bar y}(dx, d\bar x)$ is a valid coupling of $\rho_1^X$ and $\bar\rho_1^X$. To check that, for instance, the first marginal of $\hat \gamma$ is $\rho_1^X$, we take a bounded measurable function $\psi : \R^d\to\R$ and calculate
\begin{align*}
    \int \psi(x) \, \hat\gamma(dx, d\bar x)
    &= \iint \psi(x) \, \gamma(dy, d\bar y) \, \gamma_{y,\bar y}(dx, d\bar x)
    = \iint \psi(x) \, \gamma(dy, d\bar y) \, \pi^{X\mid Y=y}(dx) \\
    &= \iint \psi(x) \, \rho_0^Y(dy) \, \pi^{X\mid Y=y}(dx)
    = \int \psi(x) \, \rho_1^X(dx)\,,
\end{align*}
and similarly the second marginal of $\hat \gamma$ is $\bar\rho_1^X$. Therefore,
\begin{align*}
    W_2^2(\rho_1^X,\bar\rho_1^X)
    &\le \int \norm{x-\bar x}^2 \, \hat \gamma(dx,d\bar x)
    = \iint \norm{x-\bar x}^2 \, \gamma(dy, d\bar y) \, \gamma_{y,\bar y}(dx, d\bar x) \\
    &= \int W_2^2(\pi^{X\mid Y=y}, \pi^{X\mid Y=\bar y}) \, \gamma(dy, d\bar y) \\
    &\le \frac{1}{{(1+\alpha \eta)}^2} \int \norm{y-\bar y}^2 \, \gamma(dy, d\bar y)
    = \frac{1}{{(1+\alpha \eta)}^2} \, W_2^2(\rho_0^Y,\bar\rho_0^Y)\,,
\end{align*}
which completes the proof.
\end{proof}

\subsection{Convergence under log-concavity}
\label{Sec:LC}

For a probability distribution $\rho$ with smooth relative density $\frac{\rho}{\pi}$, the {\em Fisher information} of $\rho$ with respect to $\pi$ is
\begin{align}\label{eq:fisher_info}
    J_\pi(\rho) := \int \rho \,\Bigl\| \nabla \log \frac{\rho}{\pi} \Bigr\|^2
    = \E_\pi\Bigl[ \frac{\pi}{\rho} \,\Bigl\| \nabla \frac{\rho}{\pi} \Bigr\|^2 \Bigr]\,.
\end{align}
Recall that Fisher information is the dissipation of KL divergence along the Langevin dynamics.

\begin{proof}[Proof of Theorem~\ref{Thm:LC}]
    We follow the strategy and notation of Section~\ref{scn:general_strategy}.
    \begin{enumerate}
        \item \textbf{Forward step:} By log-concavity of $\pi^X Q_t$ (since log-concavity is preserved by convolution~\citep{saumard2014logconcavity}), the convexity of $H_{\pi^X Q_t}$ along Wasserstein geodesics~\citep[Theorem 9.4.11]{ambrosio2008gradient} yields the inequality
    \begin{align*}
        0
        &= H_{\pi^X Q_t}(\pi^X Q_t) \\
        &\ge H_{\pi^X Q_t}(\rho_0^X Q_t) + \E_{(X_t, Y_t) \sim \msf{OPT}(\rho_0^X Q_t,\pi^X Q_t)}\bigl\langle \nabla \log \frac{\rho_0^X Q_t}{\pi^X Q_t}(X_t), Y_t - X_t \bigr\rangle
    \end{align*}
    where $\msf{OPT}(\cdot,\cdot)$ is used to denote the optimal transport plan.
    Hence,
    \begin{align}\label{Eq:LCCalc}
        \underbrace{\E_{\rho_0^X Q_t}\bigl[\bigl\lVert \nabla \log \frac{\rho_0^X Q_t}{\pi^X Q_t} \bigr\rVert^2\bigr]}_{= J_{\pi^XQ_t}(\rho_0^XQ_t)} \, W_2^2(\rho_0^X Q_t,\pi^X Q_t)
        &\ge {H_{\pi^X Q_t}(\rho_0^X Q_t)}^2\,.
    \end{align}
    So, by Lemma~\ref{Lem:SimFlow} and~\eqref{Eq:LCCalc},
    \begin{align*}
        \partial_t H_{\pi^X Q_t}(\rho_0^X Q_t)
        &= -\frac{1}{2}\,J_{\pi^XQ_t}(\rho_0^XQ_t)
        \le -\frac{1}{2}\, \frac{{H_{\pi^X Q_t}(\rho_0^X Q_t)}^2}{W_2^2(\rho_0^X Q_t,\pi^X Q_t)}\,.
    \end{align*}
    Also, observe that $t\mapsto W_2^2(\rho_0^X Q_t,\pi^X Q_t)$ is decreasing because the heat flow is a $W_2$ contraction (which can be proven directly quite easily).
    Solving this differential inequality yields
    \begin{align*}
        \frac{1}{H_{\pi^Y}(\rho_0^Y)}
        =
        \frac{1}{H_{\pi^X Q_\eta}(\rho_0^X Q_\eta)}
        &\ge \frac{1}{H_{\pi^X}(\rho_0^X)} + \frac{\eta}{2W_2^2(\rho_0^X,\pi^X)}\,.
    \end{align*}
    \item \textbf{Backward step:} By Lemma~\ref{lem:simflow2_new} and~\eqref{Eq:LCCalc},
    \[
        \partial_t H_{\pi^Y Q_t^-}(\rho_0^Y Q_t^-) = -\frac{1}{2} \,J_{\pi^Y Q_t^-}(\rho_0^Y Q_t^-)
        \le -\frac{1}{2} \, \frac{{H_{\pi_Y Q_t^-}(\rho_0^Y Q_t^-)}^2}{W_2^2(\rho_0^Y Q_t^-, \pi^Y Q_t^-)}\,. 
    \]
By~\eqref{eq:htransform}, the channels ${(Q_t^-)}_{t\ge 0}$ can be modeled by the diffusion
    \[
        dZ_t = \nabla \log \pi_{\eta-t}(Z_t)\,dt + dW_t.
    \]
Since $\log \pi_{\eta-t}$ is concave, with a standard coupling argument, one can show that $t\mapsto W_2(\rho_0^Y Q_t^-, \pi^Y Q_t^-)$ is decreasing. Hence,
\begin{align*}
    W_2(\rho_0^Y Q_t^-, \pi^Y Q_t^-) \le W_2(\rho_0^Y Q_0^-, \pi^Y Q_0^-) = W_2(\rho_0^Y, \pi^Y) \le W_2(\rho_0^X, \pi^X)\,.
\end{align*}
Therefore, we deduce that
    \[
        \frac{1}{H_{\pi^X}(\rho_1^X)}=\frac{1}{H_{\pi^Y Q_\eta^-}(\rho_0^Y Q_\eta^-)}
        \ge \frac{1}{H_{\pi^Y}(\rho_0^Y)} + \frac{\eta}{2 W_2^2(\rho_0^X,\pi^X)}\,.
    \]

    Finally, we iterate this inequality and recall that $W_2^2(\rho_k^X,\pi^X) \le W_2^2(\rho_0^X,\pi^X)$ for all $k\in\mathbb N$ (see Theorem~\ref{Thm:SLC} for $\alpha = 0$). It quickly yields
    \begin{align*}
        \frac{1}{H_{\pi^X}(\rho_k^X)}
        &\ge \frac{1}{H_{\pi^X}(\rho_0^X)} + \frac{k\eta}{W_2^2(\rho_0^X, \pi^X)}
    \end{align*}
    or
    \begin{align*}
        H_{\pi^X}(\rho_k^X)
        &\le \frac{H_{\pi^X}(\rho_0^X)}{1 + k\eta \,H_{\pi^X}(\rho_0^X)/W_2^2(\rho_0^X, \pi^X)}
        \le \frac{W_2^2(\rho_0^X, \pi^X)}{k\eta}\,. 
    \end{align*}
    \end{enumerate}
\end{proof}

\subsection{Convergence under LSI}\label{Sec:LSI}

We recall the following definitions.
For a probability distribution $\rho$ with smooth relative density $\frac{\rho}{\pi}$,
the {\em R\'enyi information} of $\rho$ with respect to $\pi$ of order $q \ge 1$ is
\begin{align*}
    J_{q,\pi}(\rho) := q\, \frac{\E_\pi \bigl[ \bigl(\frac{\pi}{\rho}\bigr)^{q-2}\, \bigr\| \nabla \frac{\rho}{\pi} \bigr\|^2 \bigr]}{
    \E_\pi\bigl[\bigl(\frac{\pi}{\rho}\bigr)^{q}\bigr]}\,.
\end{align*}
Note that $J_{1,\pi}(\rho) = J_\pi(\rho)$, where $J_\pi$ is the Fisher information~\eqref{eq:fisher_info}.
Recall that by definition, $\pi$ satisfies $\alpha$-LSI if for all $\rho$,
$J_\pi(\rho) \ge 2 \alpha H_\pi(\rho)$.
One can show this also implies for all $q \ge 1$:
\begin{align}\label{Eq:RenyiLSI}
    J_{q,\pi}(\rho) \ge \frac{2\alpha}{q} \,R_{q,\pi}(\rho)\,,
\end{align}
see for example~\citet[Lemma~5]{VW19}.
Just as Fisher information is the dissipation of KL divergence along the Langevin dynamics,  R\'enyi information is the dissipation of R\'enyi divergence along the Langevin dynamics.

\begin{proof}[Proof of Theorem~\ref{Thm:LSI}]
We will prove the following one-step improvement lemma for R\'enyi divergence of order $q \ge 1$: 
For any initial distribution $\rho_0^X$, after one iteration of the proximal sampler with step size $\eta > 0$, the resulting distribution $\rho_1^X$ satisfies
\begin{align}\label{Eq:RenyiOneStep}
       R_{q,\pi^X}(\rho_1^X) \le \frac{R_{q,\pi^X}(\rho_0^X)}{{(1 + \alpha \eta)}^{2/q}}\,.
\end{align}
Iterating this lemma for $k$ iterations yields the desired convergence rate in the theorem.
The result for KL divergence is the special case $q = 1$.

We follow the strategy and notation of Section~\ref{scn:general_strategy}.

\begin{enumerate}
    \item {\bf Forward step:}
    By Lemma~\ref{Lem:SimFlow}, along the simultaneous heat flow, 
    \begin{align*}
       \partial_t R_{q, \pi^X Q_t}(\rho_0^X Q_t) = -\frac{1}{2}\, J_{q,\pi^X Q_t}(\rho_0^X Q_t)
      \le -\frac{\alpha_t}{q}\, R_{q,\pi^X Q_t}(\rho_0^X Q_t)
   \end{align*}
   where by~\eqref{Eq:RenyiLSI}, the last inequality holds if $\pi^X Q_t$ is $\alpha_t$-LSI.
   Since $\pi^X$ satisfies $\alpha$-LSI by assumption, recall that $\pi^X Q_t = \pi^X \ast \N(0, tI)$ satisfies $\alpha_t$-LSI with $\alpha_t = (\frac{1}{\alpha} + t)^{-1} = \frac{\alpha}{1 + \alpha t}$.
   Integrating, we get 
   \begin{align*}
       R_{q,\pi^X Q_t}(\rho_0^X Q_t) \le \exp(-A_t)\, R_{q,\pi^X}(\rho_0^X)
   \end{align*}
   where 
       $A_t = \frac{1}{q} \int_0^t \alpha_s \, ds = \frac{1}{q} \int_0^t \frac{\alpha}{1 + \alpha s} \, ds
       = \frac{1}{q} \log(1+\alpha t)$.
   Therefore, after the forward step,
   \begin{align*}
       R_{q,\pi^Y}(\rho_0^Y) = R_{q,\pi^X Q_\eta}(\rho_0^X Q_\eta) \le  \frac{R_{q,\pi^X}(\rho_0^X)}{{(1 + \alpha \eta)}^{1/q}}\,.
   \end{align*}  
   
    \item {\bf Backward step:}
    By Lemma~\ref{lem:simflow2_new}, along the simultaneous backwards heat flow, 
    \begin{align*}
       \partial_t R_{q, \pi^Y Q_t^-}(\rho_0^Y Q_t^-) = -\frac{1}{2}\, J_{q,\pi^Y Q_t^-}(\rho_0^Y Q_t^-)
      \le -\frac{\alpha_{\eta-t}}{q}\, R_{q,\pi^Y Q_t^-}(\rho_0^Y Q_t^-)
   \end{align*}
   where the last inequality holds since $\pi^Y Q_t^- = \pi \ast \N(0, (\eta - t) I)$ is $\alpha_{\eta-t}$-LSI.
   Therefore, just as in the forward step, integration yields
   \begin{align*}
       R_{q,\pi^X}(\rho_1^X) = R_{q,\pi^Y Q_\eta^-}(\rho_0^Y Q_\eta^-) \le  \frac{R_{q,\pi^Y}(\rho_0^Y)}{{(1 + \alpha \eta)}^{1/q}}\,.
   \end{align*}  
\end{enumerate}
Combining the two steps above yields the desired contraction rate in~\eqref{Eq:RenyiOneStep}.

\end{proof}

\subsection{Convergence under PI}
\label{Sec:Poincare}

The dissipation of the chi-squared divergence along the Langevin dynamics is
\begin{align*}
    J_{\chi^2,\pi}(\rho)
    &:= 2 \E_\pi\Bigl[\Bigl\lVert \nabla \frac{\rho}{\pi}\Bigr\rVert^2\Bigr]\,.
\end{align*}

\begin{proof}[Proof of Theorem~\ref{Thm:Poincare}]
    We follow the strategy and notation of Section~\ref{scn:general_strategy}.
    \begin{enumerate}
        \item \textbf{Forward step:} Along the simultaneous heat flow, Lemma~\ref{Lem:SimFlow} yields
        \begin{align*}
            \partial_t \chi^2_{\pi^X Q_t}(\rho_0^X Q_t)
            &= - \frac{1}{2} \, J_{\chi^2,\pi^X Q_t}(\rho_0^X Q_t)\,, \qquad \\[0.25em]
            \partial_t R_{q,\pi^X Q_t}(\rho_0^X Q_t)
            &= - \frac{1}{2} \, J_{q,\pi^X Q_t}(\rho_0^X Q_t)\,.
        \end{align*}
        Since $\pi^X$ satisfies $\alpha$-PI, then $\pi^X Q_t$ satisfies $\alpha_t$-PI with $\alpha_t = \frac{\alpha}{1+\alpha t}$. Applying this yields
        \begin{align*}
            \partial_t \chi^2_{\pi^X Q_t}(\rho_0^X Q_t)
            &= - \frac{1}{2} \, J_{\chi^2,\pi^X Q_t}(\rho_0^X Q_t)
            \le -\alpha_t \,\chi^2_{\pi^X Q_t}(\rho_0^X Q_t)
        \end{align*}
        and therefore
        \begin{align*}
            \chi^2_{\pi^Y}(\rho_0^Y)
            &= \chi^2_{\pi^X Q_\eta}(\rho_0^X Q_\eta)
            \le \frac{\chi^2_{\pi^X}(\rho_0^X)}{1+\alpha\eta}
        \end{align*}
        upon integration.
        
        Next, from~\citet[Lemma 17]{VW19}, $\alpha_t$-PI implies
        \begin{align*}
            \partial_t R_{q,\pi^X Q_t}(\rho_0^X Q_t)
            = - \frac{1}{2} \, J_{q,\pi^X Q_t}(\rho_0^X Q_t)
            \le - \frac{2\alpha_t}{q} \, \{1-\exp(-R_{q,\pi^X Q_t}(\rho_0^X Q_t))\}\,.
        \end{align*}
        We split into two cases. If $R_{q,\pi^X}(\rho_0^X) \ge 1$, then as long as $R_{q,\pi^X Q_t}(\rho_0^X Q_t)\ge 1$ we can use the inequality $1-\exp(-x) \ge\frac{1}{2}$ for $x \ge 1$, so that
        \begin{align*}
            \partial_t R_{q,\pi^X Q_t}(\rho_0^X Q_t)
            &\le - \frac{\alpha_t}{q}\,.
        \end{align*}
        Integrating, we obtain
        \begin{align*}
            R_{q,\pi^Y}(\rho_0^Y)
            &= R_{q,\pi^X Q_\eta}(\rho_0^X Q_\eta)
            \le \Bigl(R_{q,\pi^X}(\rho_0^X) - \frac{\log(1+\alpha\eta)}{q}\Bigr) \vee 1\,.
        \end{align*}
        In the second case, if $R_{q,\pi^X}(\rho_0^X) \le 1$, then we use $1-\exp(-x) \ge \frac{x}{2}$ for $x \in [0,1]$ to obtain
        \begin{align*}
            \partial_t R_{q,\pi^X Q_t}(\rho_0^X Q_t)
            &\le -\frac{\alpha_t}{q} \, R_{q,\pi^X Q_t}(\rho_0^X Q_t)\,.
        \end{align*}
        Integrating,
        \begin{align*}
            R_{q,\pi^Y}(\rho_0^Y)
            &= R_{q,\pi^X Q_\eta}(\rho_0^X Q_\eta)
            \le \frac{R_{q,\pi^X}(\rho_0^X)}{{(1+\alpha \eta)}^{1/q}}\,.
        \end{align*}
        
        \item \textbf{Backward step:} 
        Along the simultaneous backwards heat equation, Lemma~\ref{lem:simflow2_new} yields
        \begin{align*}
            \partial_t \chi^2_{\pi^Y Q_t^-}(\rho_0^Y Q_t^-)
            &= -\frac{1}{2} \, J_{\chi^2,\pi^Y Q_t^-}(\rho_0^Y Q_t^-)\,, \qquad \\[0.25em]
            \partial_t R_{q,\pi^Y Q_t^-}(\rho_0^Y Q_t^-)
            &= -\frac{1}{2} \, J_{q,\pi^Y Q_t^-}(\rho^Y_0 Q_t^-)\,.
        \end{align*}
        Using entirely analogous arguments as in the forward step, we obtain
        \begin{align*}
            \chi^2_{\pi^X}(\rho_1^X)
            &= \chi^2_{\pi^Y Q_\eta^-}(\rho_0^Y Q_\eta^-)
            \le \frac{\chi^2_{\pi^Y}(\rho_0^Y)}{1+\alpha\eta}
        \end{align*}
        for the chi-squared divergence,
        \begin{align*}
            R_{q,\pi^X}(\rho_1^X)
            &= R_{q,\pi^Y Q_\eta^-}(\rho_0^Y Q_\eta^-)
            \le \Bigl( R_{q,\pi^Y}(\rho_0^Y) - \frac{\log(1+\alpha\eta)}{q}\Bigr) \vee 1
        \end{align*}
        for the R\'enyi divergence if $R_{q,\pi^Y}(\rho_0^Y) \ge 1$, and
        \begin{align*}
            R_{q,\pi^X}(\rho_1^X)
            &= R_{q,\pi^Y Q_\eta^-}(\rho_0^Y Q_\eta^-)
            \le \frac{R_{q,\pi^Y}(\rho_0^Y)}{{(1+\alpha\eta)}^{1/q}}
        \end{align*}
        if $R_{q,\pi^Y}(\rho_0^Y) \le 1$.
    \end{enumerate}
\end{proof}

\subsection{Convergence under LOI}\label{scn:loi_pf}

Before giving the convergence proof under LOI, we recall the following property of the behavior of LOI under convolution.

\begin{lemma}\label{lem:loi_under_convolution}
    Suppose that $\mu_0$ satisfies $(r, \alpha_0)$-LOI and $\mu_1$ satisfies $(r,\alpha_1)$-LOI.
    Then, $\mu_0\ast\mu_1$ satisfies $(r, (\alpha_0^{-1} + \alpha_1^{-1})^{-1})$-LOI.
\end{lemma}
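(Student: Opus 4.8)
The plan is to deduce $(r,(\alpha_0^{-1}+\alpha_1^{-1})^{-1})$-LOI for $\mu_0\ast\mu_1$ from the LOIs for $\mu_0$ and $\mu_1$ by combining two ingredients: the tensorization (subadditivity) of the Lata\l{}a--Oleszkiewicz functionals $\var_{p,\cdot}$, and the fact that $\mu_0\ast\mu_1$ is the pushforward of the product $\mu_0\otimes\mu_1$ under the addition map $(x_0,x_1)\mapsto x_0+x_1$, which has the convenient feature that it turns a test function $\psi$ on $\R^d$ into a function on $\R^d\times\R^d$ whose two partial gradients coincide. This mirrors the classical proof that LSI constants add harmonically under convolution, with relative entropy replaced by the family of functionals $\var_{p,\cdot}$.

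Concretely, I would proceed as follows. \textbf{Step 1 (subadditivity).} For each fixed $p\in(1,2)$ and any smooth bounded nonnegative $g$ on $\R^d\times\R^d$,
\[
  \var_{p,\mu_0\otimes\mu_1}(g)\ \le\ \E_{\mu_1}\bigl[\var_{p,\mu_0}\bigl(g(\cdot,X_1)\bigr)\bigr]+\E_{\mu_0}\bigl[\var_{p,\mu_1}\bigl(g(X_0,\cdot)\bigr)\bigr]\,.
\]
This is the tensorization property of the LOI established in~\citet{latalaoleszkiewicz2000loineq}; it may also be obtained from the general subadditivity of $\Phi$-entropies applied to $\Phi(u)=u^{2/p}$ (admissible since $2/p\in(1,2)$ for $p\in(1,2)$) after the substitution $h=g^{p}$. \textbf{Step 2 (specialize and apply the two LOIs).} Given a smooth bounded $\psi\colon\R^d\to\R_+$, apply Step 1 to $g(x_0,x_1):=\psi(x_0+x_1)$. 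Bounding the first term on the right by $(r,\alpha_0)$-LOI (in the $x_0$ variable, then integrating in $x_1$) and the second by $(r,\alpha_1)$-LOI, and using $\nabla_{x_0}g=\nabla_{x_1}g=(\nabla\psi)(x_0+x_1)$, one gets
\[
  \var_{p,\mu_0\otimes\mu_1}(g)\ \le\ (2-p)^{2\,(1-1/r)}\Bigl(\tfrac{1}{\alpha_0}+\tfrac{1}{\alpha_1}\Bigr)\,\E_{\mu_0\otimes\mu_1}\bigl[\norm{(\nabla\psi)(X_0+X_1)}^2\bigr]\,.
\]
\textbf{Step 3 (transfer to the convolution).} Since $X_0+X_1\sim\mu_0\ast\mu_1$ when $(X_0,X_1)\sim\mu_0\otimes\mu_1$, and $g$ depends on $(x_0,x_1)$ only through $x_0+x_1$, the left-hand side equals $\var_{p,\mu_0\ast\mu_1}(\psi)$ and the expectation on the right equals $\E_{\mu_0\ast\mu_1}[\norm{\nabla\psi}^2]$. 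Dividing by $(2-p)^{2\,(1-1/r)}$ and taking the supremum over $p\in(1,2)$ yields exactly $(r,(\alpha_0^{-1}+\alpha_1^{-1})^{-1})$-LOI for $\mu_0\ast\mu_1$, since $\tfrac{1}{\alpha_0}+\tfrac{1}{\alpha_1}$ is the reciprocal of the claimed constant.

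The only non-bookkeeping ingredient is Step 1, which I therefore expect to be the main (indeed, essentially the only) obstacle: one must invoke the correct classical tensorization statement and verify that it applies in the needed generality — over the range $p\in(1,2)$, for nonnegative functions, and for a product of two \emph{distinct} measures carrying two \emph{distinct} LOI constants, not merely an $n$-fold self-product. It is worth noting that the cruder route of writing $\mu_0\ast\mu_1$ as the image of $\mu_0\otimes\mu_1$ under the $\sqrt2$-Lipschitz addition map and invoking stability of LOI under Lipschitz pushforwards, together with stability of LOI under products (which only gives constant $\min(\alpha_0,\alpha_1)$), would produce the suboptimal constant $\tfrac12\min(\alpha_0,\alpha_1)$; the tensorization argument above is precisely what recovers the sharp harmonic-sum constant.
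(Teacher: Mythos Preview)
Your proposal is correct and follows essentially the same approach as the paper: both write $\var_{p,\cdot}(\psi)$ as the $\Phi$-entropy of $\psi^p$ with $\Phi(u)=u^{2/p}$, invoke subadditivity of $\Phi$-entropies for the product $\mu_0\otimes\mu_1$, apply the two LOIs fiberwise to $g(x_0,x_1)=\psi(x_0+x_1)$, and use $\nabla_{x_0}g=\nabla_{x_1}g$ to collapse the energy term. The paper's proof is much terser (it cites \citet[Theorem~14.1]{boucheronetal2013concentration} for the subadditivity and leaves the rest implicit), whereas you have spelled out the steps and even anticipated the pitfall of the cruder Lipschitz-pushforward route, but there is no substantive difference in strategy.
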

\begin{proof}
    Let $X_0 \sim \mu_0$ and $X_1\sim\mu_1$ be independent.
    Then, we can write
    \begin{align*}
        \var_{p,\mu_0*\mu_1}(\psi)
        &= \E[\Phi(\psi^p(X_0+X_1))] - \Phi(\E[\psi^p(X_0 + X_1)])
    \end{align*}
    where $\Phi(x) := x^{2/p}$. One can then deduce the conclusion of the lemma easily from the subadditivity of the $\Phi$-entropy~\citep[Theorem 14.1]{boucheronetal2013concentration}.
\end{proof}

\begin{proof}[Proof of Theorem~\ref{thm:loi}]
    We follow the strategy and notation of Section~\ref{scn:general_strategy}.
    \begin{enumerate}
        \item \textbf{Forward step:} Along the simultaneous heat flow, Lemma~\ref{Lem:SimFlow} yields
        \begin{align*}
            \partial_t R_{q,\pi^X Q_t}(\rho_0^X Q_t)
            &= - \frac{1}{2} \, J_{q,\pi^X Q_t}(\rho_0^X Q_t)\,.
        \end{align*}
        Since $\pi^X$ satisfies $(r,\alpha)$-LOI and $\mc N(0, tI)$ satisfies $(r', t^{-1})$-LOI for any $r'\in [1,2]$~\citep[see][Corollary 1]{latalaoleszkiewicz2000loineq}, then by Lemma~\ref{lem:loi_under_convolution}, $\pi^X Q_t$ satisfies $(r,\alpha_t)$-LOI with $\alpha_t = \frac{\alpha}{1+\alpha t}$.
        
        Next, from~\citet[Theorem 2]{chewietal2021lmcpoincare}, $(r,\alpha_t)$-LOI implies
        \begin{align*}
            \partial_t R_{q,\pi^X Q_t}(\rho_0^X Q_t)
            &= - \frac{1}{2} \, J_{q,\pi^X Q_t}(\rho_0^X Q_t) \\
            &\le - \frac{\alpha_t}{136q} \begin{cases} {R_{q,\pi^X Q_t}(\rho_0^X Q_t)}^{2-2/r}\,, & R_{q,\pi^X Q_t}(\rho_0^X Q_t) \ge 1\,, \\
            R_{q,\pi^X Q_t}(\rho_0^X Q_t)\,, & R_{q,\pi^X Q_t}(\rho_0^X Q_t) \le 1\,. \end{cases}
        \end{align*}
        We split into two cases. If $R_{q,\pi^X}(\rho_0^X) \ge 1$, then as long as $R_{q,\pi^X Q_t}(\rho_0^X Q_t)\ge 1$,
        \begin{align*}
            \partial_t\, {R_{q,\pi^X Q_t}(\rho_0^X Q_t)}^{2/r-1}
            &= \bigl( \frac{2}{r} -1\bigr) \, \frac{\partial_t R_{q,\pi^X Q_t}(\rho_0^X Q_t)}{{R_{q,\pi^X Q_t}(\rho_0^X Q_t)}^{2-2/r}}
            \le - \frac{\alpha_t}{136q} \, \bigl( \frac{2}{r} -1\bigr)
        \end{align*}
        and therefore
        \begin{align*}
            {R_{q,\pi^Y}(\rho_0^Y)}^{2/r-1}
            &= {R_{q,\pi^X Q_\eta}(\rho_0^X Q_\eta)}^{2/r-1} \\
            &\le \Bigl({R_{q,\pi^X}(\rho_0^X)}^{2/r-1} - \frac{(2/r - 1) \log(1+\alpha\eta)}{136q}\Bigr) \vee 1\,.
        \end{align*}
        In the second case, if $R_{q,\pi^X}(\rho_0^X) \le 1$, then
        \begin{align*}
            \partial_t R_{q,\pi^X Q_t}(\rho_0^X Q_t)
            &\le -\frac{\alpha_t}{136q} \, R_{q,\pi^X Q_t}(\rho_0^X Q_t)\,.
        \end{align*}
        Integrating,
        \begin{align*}
            R_{q,\pi^Y}(\rho_0^Y)
            &= R_{q,\pi^X Q_\eta}(\rho_0^X Q_\eta)
            \le \frac{R_{q,\pi^X}(\rho_0^X)}{{(1+\alpha \eta)}^{1/(136q)}}\,.
        \end{align*}
        \item \textbf{Backward step:} 
        Along the simultaneous backwards heat equation, Lemma~\ref{lem:simflow2_new} yields
        \begin{align*}
            \partial_t R_{q,\pi^Y Q_t^-}(\rho_0^Y Q_t^-)
            &= -\frac{1}{2} \, J_{q,\pi^Y Q_t^-}(\rho^Y_0 Q_t^-)\,.
        \end{align*}
        Using entirely analogous arguments as in the forward step, we obtain
        \begin{align*}
            {R_{q,\pi^X}(\rho_1^X)}^{2/r-1}
            &= {R_{q,\pi^Y Q_\eta^-}(\rho_0^Y Q_\eta^-)}^{2/r-1} \\
            &\le \Bigl( {R_{q,\pi^Y}(\rho_0^Y)}^{2/r-1} - \frac{(2/r-1)\log(1+\alpha\eta)}{136q}\Bigr) \vee 1
        \end{align*}
        if $R_{q,\pi^Y}(\rho_0^Y) \ge 1$, and
        \begin{align*}
            R_{q,\pi^X}(\rho_1^X)
            &= R_{q,\pi^Y Q_\eta^-}(\rho_0^Y Q_\eta^-)
            \le \frac{R_{q,\pi^Y}(\rho_0^Y)}{{(1+\alpha\eta)}^{1/(136q)}}
        \end{align*}
        if $R_{q,\pi^Y}(\rho_0^Y) \le 1$.
    \end{enumerate}
\end{proof}

\subsection{The proximal sampler as an entropy-regularized Wasserstein gradient flow}
\label{Sec:EntReg}

\begin{proof}[Proof of Theorem \ref{thm:JKO}]
Plugging \eqref{eq:entropyW} into \eqref{eq:forwardJKO} yields $\rho_k^Y = \gamma^Y$ with $\gamma$ being the solution to
	\[
		\min_{\substack{\gamma\in\mc P_2(\R^d\times\R^d) \\ \gamma^X = \rho_k^X}}{\Bigl\{ \int \frac{1}{2\eta} \,\|x-y\|^2 \,d\gamma(x,y) + H(\gamma)\Bigr\}}\,,
	\]
which is clearly $\gamma(x,y) \propto \rho_k^X(x)\exp(-\frac{1}{2\eta}\,\|x-y\|^2)$. Thus, $\rho_k^Y =\gamma^Y=\rho_k^X*\N(0, \eta I)$.

Similarly, plugging \eqref{eq:entropyW} into \eqref{eq:backwardJKO} yields $\rho_{k+1}^X = \gamma^X$ with $\gamma$ being the solution to 
	\[
		\min_{\substack{\gamma \in\mc P_2(\R^d\times \R^d) \\ \gamma^Y = \rho_k^Y}}{\Bigl\{\int \bigl[f(x)+\frac{1}{2\eta} \,\|x-y\|^2\bigr]\,d\gamma(x,y) + H(\gamma)\Bigr\}}\,,
	\]
which is clearly $\gamma(x,y) \propto \rho_k^Y(y) \exp(-f(x)-\frac{1}{2\eta}\,\|x-y\|^2)$. Thus, $\rho_{k+1}^X$ is induced by $\pi^{X|Y}(x \mid y) \propto_x \exp(-f(x)-\frac{1}{2\eta}\,\|x-y\|^2)$ from marginal distribution $Y\sim \rho_k^Y$. 
\end{proof}

\subsection{The proximal point method as a limit of the proximal sampler}
\label{Sec:ProxLimit}

\begin{proof}[Proof of Theorem \ref{Thm:Limit}]
With a general $\epsilon$, following similar argument as in Section~\ref{Sec:EntReg}, we can show that the updates \eqref{eq:forwardJKOgeneral}--\eqref{eq:backwardJKOgeneral} correspond to the sampling algorithm
	\begin{subequations}\label{eq:RGO_eps}
	\begin{eqnarray}\label{eq:RGO_eps1}
		&y_{k} &\sim \pi_\epsilon^{Y|X=x_k} = \N(x_k, \epsilon \eta I)\,,		\\
		&x_{k+1} &\sim \pi_\epsilon^{X|Y=y_{k}} \propto \exp\Bigl[-\frac{1}{\epsilon} \,\bigl(f(x)+\frac{1}{2\eta}\,\|x-y_{k}\|^2\bigr)\Bigr]\,. \label{eq:RGO_eps2}
	\end{eqnarray}
	\end{subequations}
As $\epsilon\searrow 0$, we see that \eqref{eq:RGO_eps1} converges to  $y_{k} = x_k$, whereas \eqref{eq:RGO_eps2} converges to the proximal mapping $x_{k+1} = \argmin_{x\in\R^d}\{ f(x)+\frac{1}{2\eta}\,\|x-y_k\|^2\}$. Combining the two gives exactly the proximal point update $x_{k+1} = \prox_{\eta f}(x_k)$. In addition, the invariant distribution of this algorithm is $\pi^X_\epsilon \propto \exp(-f/\epsilon)$, which converges to a Dirac distribution concentrating on the minimizer of $f$ (or a uniform distribution over the minimizer set of $f$).
\end{proof}

It turns out that under some assumptions, the convergence rate of the updates \eqref{eq:RGO_eps} is independent of the entropy regularization level $\epsilon$. 
We state and prove the result below for KL divergence only, but the result also holds for R\'enyi divergence and $\chi^2$-divergence.

\begin{theorem}\label{thm:epsilon}
When $f$ is $\alpha$-strongly convex, the updates of the generalized proximal sampler algorithm converge to the stationary distribution $\pi^X_\epsilon \propto \exp(-f/\epsilon)$ with rate
	\begin{equation}
		H_{\pi^X_\epsilon} (\rho^X_k) \le \frac{1}{{(1+\alpha \eta)}^{2k}}\, H_{\pi^X_\epsilon} (\rho^X_0)\,. 
	\end{equation}
\end{theorem}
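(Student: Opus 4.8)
The plan is to reduce Theorem~\ref{thm:epsilon} to the convergence guarantee already established under LSI (Theorem~\ref{Thm:LSI}) via a change of variables. The key observation is that the generalized proximal sampler with step size $\eta$ and regularization level $\epsilon$, written explicitly as the sampling recursion \eqref{eq:RGO_eps1}--\eqref{eq:RGO_eps2}, is \emph{exactly} the unregularized proximal sampler with step size $\tilde\eta := \epsilon\eta$ run on the target $\pi^X_\epsilon \propto \exp(-f/\epsilon)$. Indeed, writing $\tilde f := f/\epsilon$, the forward step \eqref{eq:RGO_eps1} is $y_k \sim \N(x_k, \epsilon\eta I) = \N(x_k, \tilde\eta I)$, and the backward step \eqref{eq:RGO_eps2} draws $x_{k+1}$ from the density proportional in $x$ to $\exp[-\tfrac{1}{\epsilon}(f(x) + \tfrac{1}{2\eta}\norm{x-y_k}^2)] \propto_x \exp(-\tilde f(x) - \tfrac{1}{2\tilde\eta}\norm{x-y_k}^2)$; these are precisely the two steps of the proximal sampler of Section~\ref{scn:proximal_sampler} applied to the potential $\tilde f$ with step size $\tilde\eta$.

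Next I would track the effect of the rescaling $f \mapsto \tilde f = f/\epsilon$ on the hypotheses: if $f$ is $\alpha$-strongly convex, then $\tilde f$ is $(\alpha/\epsilon)$-strongly convex, and since strong log-concavity implies LSI with the strong convexity parameter as LSI constant, $\pi^X_\epsilon$ satisfies $\tilde\alpha$-LSI with $\tilde\alpha := \alpha/\epsilon$. (One also checks trivially that $\pi^X_\epsilon$ has finite second moment, so the proximal sampler is well defined for this target.)

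Then I would apply Theorem~\ref{Thm:LSI} to the target $\pi^X_\epsilon$ with LSI constant $\tilde\alpha = \alpha/\epsilon$ and step size $\tilde\eta = \epsilon\eta$: for the $k$-th iterate $\rho^X_k$ of this sampler,
\[
    H_{\pi^X_\epsilon}(\rho^X_k) \le \frac{H_{\pi^X_\epsilon}(\rho^X_0)}{(1+\tilde\alpha\tilde\eta)^{2k}}\,.
\]
Since $\tilde\alpha\tilde\eta = (\alpha/\epsilon)(\epsilon\eta) = \alpha\eta$, the two factors of $\epsilon$ cancel and the bound becomes exactly $H_{\pi^X_\epsilon}(\rho^X_k) \le (1+\alpha\eta)^{-2k}\, H_{\pi^X_\epsilon}(\rho^X_0)$, as claimed. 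The same reduction combined with the R\'enyi and $\chi^2$ parts of Theorems~\ref{Thm:LSI} and~\ref{Thm:Poincare} yields the corresponding statements in those metrics, and the $\epsilon$-independence of the rate is consistent with---and, in the limit $\epsilon\searrow 0$ of Theorem~\ref{Thm:Limit}, degenerates to---the linear convergence of the proximal point method under strong convexity.

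I do not expect a substantial obstacle: the entire content is recognizing the exact rescaling identifying the $\epsilon$-regularized sampler with an unregularized one, and then observing that the step size scales by $\epsilon$ while the strong convexity / LSI constant scales by $1/\epsilon$, so that their product---the only place these parameters enter the contraction factor in Theorem~\ref{Thm:LSI}---is invariant. The one point requiring care is to invoke the versions of Theorems~\ref{Thm:LSI} and~\ref{Thm:Poincare} for a \emph{general} target and a \emph{general} step size (as they are stated), applied to $\pi^X_\epsilon$ and $\tilde\eta$ rather than to the original $\pi^X$ and $\eta$.
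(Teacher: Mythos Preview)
Your proposal is correct and is, in fact, cleaner than the paper's own argument. The paper does not explicitly perform the change of variables $(\eta,\alpha)\mapsto(\tilde\eta,\tilde\alpha)=(\epsilon\eta,\alpha/\epsilon)$ and invoke Theorem~\ref{Thm:LSI} as a black box; instead it reruns the forward-step heat-flow computation of Section~\ref{Sec:LSI} with the scaled heat equation $\partial_t\rho_t=\tfrac{\epsilon}{2}\Delta\rho_t$ over $[0,\eta]$, observes that $\pi^X_\epsilon\ast\N(0,\epsilon tI)$ satisfies $\alpha_t$-LSI with $\alpha_t=\alpha/\{\epsilon(1+\alpha t)\}$, and notes that the extra $\epsilon$ in the dissipation cancels the $1/\epsilon$ in $\alpha_t$ upon integration (the backward step is then dismissed as analogous). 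Your reduction packages exactly this cancellation into the single identity $\tilde\alpha\tilde\eta=\alpha\eta$ and reuses Theorem~\ref{Thm:LSI} verbatim, which is both shorter and makes the $\epsilon$-independence of the rate transparent. The paper's unrolled argument buys nothing extra here, though it does make visible where strong convexity is actually used (only through the LSI constant of $\exp(-f/\epsilon)$), which is the point of the remark immediately following the theorem.
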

\begin{proof}
The forward step \eqref{eq:RGO_eps1} can be modeled by the scaled diffusion 
    \begin{equation}\label{eq:scaled_heat_flow}
        \partial_t \rho_t = \frac{\epsilon}{2}\,\Delta \rho_t
    \end{equation}
over the time interval $[0, \eta]$. Let ${(Q_t^\epsilon)}_{t\ge 0}$ denote the heat semigroup corresponding to~\eqref{eq:scaled_heat_flow}. It follows from Lemma~\ref{Lem:SimFlow} that
    \begin{equation}\label{eq:dHdt}
        \partial_t H_{\pi^X Q_t^\epsilon}(\rho_0^X Q_t^\epsilon) = -\frac{\epsilon}{2} \, J_{\pi^X Q_t^\epsilon}(\rho_0^X Q_t^\epsilon)\,.
    \end{equation}
Apparently, $\pi^X Q_t^\epsilon = \pi^X_\epsilon * \N(0, \epsilon t I)$. Thus, $\pi^X Q_t^\epsilon$ satisfies $\alpha_t$-LSI with 
    \begin{equation}\label{eq:alphat}
        \alpha_t = \frac{1}{\frac{\epsilon}{\alpha}+\epsilon t} = \frac{\alpha}{\epsilon\,(1+\alpha t)}\,,
    \end{equation}
where in the above we have used the fact that $\exp(-f/\epsilon)$ satisfies $\frac{\alpha}{\epsilon}$-LSI when $f$ is $\alpha$-strongly convex.
Plugging \eqref{eq:alphat} into \eqref{eq:dHdt} yields 
    \begin{equation}
        \partial_t H_{\pi^X Q_t^\epsilon}(\rho_0^X Q_t^\epsilon)
        \le -\alpha_t \, H_{\pi^X Q_t^\epsilon}(\rho_0^X Q_t^\epsilon)\,.
    \end{equation}
Thus, as before,
    \begin{equation}
        H_{\pi^Y_\epsilon}(\rho_0^Y) = H_{\pi^X Q_\eta^\epsilon}(\rho_0^X Q_\eta^\epsilon)\le \frac{1}{1+\alpha \eta} \,H_{\pi^X}(\rho_0^X)\,.
    \end{equation}
The contraction rate in the backward direction is the same and the proof is similar to that of Theorem~\ref{Thm:LSI}. This completes the proof.
\end{proof}

Theorem \ref{thm:epsilon} is true as long as $\exp(-f/\epsilon)$ satisfies $(\alpha/\epsilon)$-LSI. The latter is ensured when $f$ is $\alpha$-strongly convex; we ask whether it remains true under a weaker condition on $f$ (such as $\alpha$-PL).

\section{Optimization proofs inspired by the proximal sampler}

\subsection{Alternative proof of the contractivity of the proximal map}\label{scn:contractivity_proximal_map}

The following theorem is well-known in optimization.

\begin{theorem}\label{Thm:ProxCont}
Let $f : \R^d\to\R$ be $\alpha$-strongly convex and differentiable. Then, the proximal mapping
\begin{align*}
    \prox_{\eta f}(y)
    &:= \argmin_{x\in\R^d}{\Bigl\{ f(x) + \frac{1}{2\eta} \, \norm{x-y}^2\Bigr\}}
\end{align*}
is a $\frac{1}{1+\alpha\eta}$-contraction.
\end{theorem}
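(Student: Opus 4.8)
The plan is to mirror, in ordinary Euclidean space, the Wasserstein subdifferential argument used in Section~\ref{scn:pf_slc} for Theorem~\ref{Thm:SLC}: this theorem is precisely the instantiation of the abstract fact recalled at the start of that section, with the strongly monotone operator taken to be $A = \nabla f$. First I would record that, since $\alpha > 0$, the function $x \mapsto f(x) + \frac{1}{2\eta}\,\norm{x-y}^2$ is strictly convex and coercive, so $\prox_{\eta f}(y)$ is a well-defined single point $x'$, characterized by the first-order optimality condition $\nabla f(x') + \frac{1}{\eta}\,(x'-y) = 0$, i.e.\ $x' = y - \eta\,\nabla f(x')$. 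The same identity holds for $\bar x' \defeq \prox_{\eta f}(\bar y)$, namely $\bar x' = \bar y - \eta\,\nabla f(\bar x')$.

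Next I would subtract the two identities to obtain $y - \bar y = (x' - \bar x') + \eta\,(\nabla f(x') - \nabla f(\bar x'))$, take squared norms, and invoke the two standard consequences of $\alpha$-strong convexity of $f$: the monotonicity bound $\langle \nabla f(x') - \nabla f(\bar x'),\, x' - \bar x'\rangle \ge \alpha\,\norm{x'-\bar x'}^2$, and (via Cauchy--Schwarz applied to the same inequality) the bound $\norm{\nabla f(x') - \nabla f(\bar x')} \ge \alpha\,\norm{x'-\bar x'}$. Substituting these into the expansion gives
\begin{align*}
    \norm{y-\bar y}^2
    &= \norm{x'-\bar x'}^2 + 2\eta\,\langle \nabla f(x') - \nabla f(\bar x'),\, x'-\bar x'\rangle + \eta^2\,\norm{\nabla f(x') - \nabla f(\bar x')}^2 \\
    &\ge \bigl(1 + 2\alpha\eta + \alpha^2\eta^2\bigr)\,\norm{x'-\bar x'}^2
    = {(1+\alpha\eta)}^2\,\norm{x'-\bar x'}^2\,,
\end{align*}
which rearranges to $\norm{\prox_{\eta f}(y) - \prox_{\eta f}(\bar y)} \le \frac{1}{1+\alpha\eta}\,\norm{y-\bar y}$, as claimed.

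There is essentially no technical obstacle here: the heart of the argument is a one-line expansion of a square, and the reason for including it is pedagogical — to exhibit this result as the optimization shadow of the proof of Theorem~\ref{Thm:SLC}, with the squared Euclidean distance in place of $W_2^2$, the gradient in place of the Wasserstein subdifferential, and ordinary strong convexity of $f$ in place of geodesic strong convexity of $H_{\pi^X}$. The only point worth stating carefully is the well-definedness of $\prox_{\eta f}$ as a genuine (single-valued) map when $\alpha > 0$, so that the notation $x' = \prox_{\eta f}(y)$ is unambiguous; everything else is routine.
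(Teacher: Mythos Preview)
Your proof is correct, but it is not the one the paper gives in Appendix~\ref{scn:contractivity_proximal_map}. You have translated the Wasserstein subdifferential argument of Section~\ref{scn:pf_slc} (strong monotonicity of $\nabla f$, then expand the square), which is indeed the classical proof and exactly the ``abstract fact'' recalled at the start of that section. The paper, however, deliberately translates a \emph{different} sampling proof --- the one from~\citet{LST21arxiv} --- whose optimization shadow uses the quadratic growth and PL inequalities for the tilted function $f_x(z) = f(z) + \frac{1}{2\eta}\,\norm{z-x}^2$: since $f_x$ is $(\alpha+\frac{1}{\eta})$-strongly convex, one chains $\norm{x'-y'}^2 \le \frac{2}{\alpha+1/\eta}\,\{f_x(y')-f_x(x')\} \le \frac{1}{(\alpha+1/\eta)^2}\,\norm{\nabla f_x(y')}^2$, and then the optimality condition for $y'$ collapses $\nabla f_x(y')$ to $\frac{1}{\eta}(y-x)$.

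What each approach buys: your argument is shorter and is the standard one; the paper's argument is the point of the appendix, since its purpose is to exhibit the optimization analogue of the LST proof (where $f_x$ plays the role of $H_{\pi^{X\mid Y=x}}$, quadratic growth plays the role of Talagrand's inequality, and PL plays the role of LSI). The paper's route is also what motivates the PL-only result of Theorem~\ref{Thm:ProxLSI}, although as the paper remarks, $\alpha$-PL for $f$ does not by itself yield $(\alpha+\frac{1}{\eta})$-PL for $f_x$, so strong convexity is still needed here.
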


Here, we give a new proof of the theorem which translates the convergence proof of the proximal sampler in~\citet{LST21arxiv} to optimization. 

We recall that $\alpha$-strong convexity implies the $\alpha$-PL inequality (or gradient domination inequality)
\begin{align*}
    \norm{\nabla f(x)}^2
    &\ge 2\alpha \, \{f(x) - \min f\} \qquad\text{for all}~x\in\R^d\,,
\end{align*}
which in turn implies the $\alpha$-quadratic growth inequality
\begin{align*}
    f(x) - \min f
    &\ge \frac{\alpha}{2} \, \norm{x-x^\star}^2 \qquad\text{for all}~x\in\R^d\,,
\end{align*}
with $x^\star = \argmin f$, see~\citet{ottovillani2000lsi,blanchet2018family}.
\medskip{}

\begin{proof}[Proof of Theorem~\ref{Thm:ProxCont}]
    Let $f_x(z) := f(z) + \frac{1}{2\eta} \, \norm{x-z}^2$, and define $f_y$ similarly.
    Then,
    \begin{align*}
        x'
        &:= \prox_{\eta f}(x)
        = \argmin f_x\,, \\
        y'
        &:= \prox_{\eta f}(y)
        = \argmin f_y\,.
    \end{align*}
    Since $f_x$ is $(\alpha + \frac{1}{\eta})$-strongly convex, then by applying the quadratic growth and PL inequalities,
    \begin{align*}
        \norm{x'-y'}^2
        &\le \frac{2}{\alpha + 1/\eta} \, \{f_x(y') - f_x(x')\}
        \le \frac{1}{{(\alpha + 1/\eta)}^2} \, \norm{\nabla f_x(y')}^2 \\
        &= \frac{1}{{(\alpha + 1/\eta)}^2} \, \bigl\lVert \nabla f(y') + \frac{1}{\eta} \, (y' - x) \bigr\rVert^2 \\
        &= \frac{1}{{(\alpha + 1/\eta)}^2} \, \bigl\lVert - \frac{1}{\eta} \, (y' - y) + \frac{1}{\eta} \, (y' - x) \bigr\rVert^2
        = \frac{1}{{(1 +  \alpha\eta)}^2} \, \norm{x-y}^2 
    \end{align*}
    where the last line uses the optimality condition $\nabla f(y') + \frac{1}{\eta} \, (y' - y) = 0$ from the definition of $y'$.
\end{proof}

By comparing with the proof of~\citet[Lemma 2]{LST21arxiv}, we see that $f_y$
is analogous to $H_{\pi^{X \mid Y=y}}$ for the proximal sampler.

At first glance, it may appear that the proof above only requires a PL inequality, and not strong convexity.
However, this is not the case, as it in fact requires that $f_x$ satisfies $(\alpha+1/\eta)$-PL, which does not follow from (for example) the assumption that $f$ satisfies $\alpha$-PL. 

\subsection{Optimal contraction factor for the proximal point method under PL}
\label{Sec:ProofProxLSI}

Our proof uses the Hopf--Lax semigroup, guided by the following intuition.
There is an analogy between the standard algebra $(+,\times)$ and the tropical algebra $(\inf, +)$; see e.g.~\citet[Section 9.4]{baccellietal1992synchronization} or~\citet[Lecture 16]{ambrosio2021lectures}.
The following table describes these analogies.
    \begin{align*}
        \begin{array}{cc}
            (+,\times) & (\inf, +) \\
            \text{convolution} & \text{inf-convolution} \\
            \text{Fourier transform} & \text{convex conjugate} \\
            \text{diffusion} & \text{gradient flow} \\
            \text{heat equation} & \text{Hamilton-Jacobi equation} \\
            \text{heat semigroup} & \text{Hopf-Lax semigroup}
        \end{array}
    \end{align*}
As described in Section~\ref{scn:general_strategy}, our proofs for the proximal sampler involve computing the time derivative of $t\mapsto H_{\pi^X Q_t}(\rho_0^X Q_t)$ where ${(\pi^X Q_t)}_{t\ge 0}$, ${(\rho_0^X Q_t)}_{t\ge 0}$ are simultaneously evolving according to the heat flow.
In what follows, we will consider the time derivative of $t\mapsto f_{t}(x)$, where $f_t$ is the Moreau envelope of $f$. 

\medskip{}

\begin{proof}[Proof of Theorem~\ref{Thm:ProxLSI}]
    Let us define, for $t > 0$,
    \begin{align}\label{eq:simul_hj_def}
        f_{t,x}(z)
        &:= f(z) + \frac{1}{2t} \, \norm{z - x}^2\,, \qquad x_t := \argmin f_{t,x}\,.
    \end{align}
    Then $x_t = \prox_{t f}(x)$ and $x \mapsto f_{t,x}(x_t)$ is the Moreau envelope of $f$. 
    Recall the optimality condition
    \begin{align*}
        \nabla f(x_t) + \frac{1}{t} \, (x_t - x) = 0\,.
    \end{align*}
    The Moreau envelope satisfies the Hamilton Jacobi equation
    $$\partial_t f_{t,x}(x_t)
       = \langle \underbrace{\nabla f_{t,x}(x_t)}_{=0}, \dot x_t \rangle - \frac{1}{2t^2} \, \norm{x_t - x}^2.$$
    Using PL inequality,
    \begin{align*}
        \partial_t f_{t,x}(x_t)
        &= - \frac{\alpha}{2t \, (1+\alpha t)} \, \norm{x_t - x}^2 - \frac{1}{2t^2 \, (1+\alpha t)} \, \norm{x_t - x}^2 \\
        &= - \frac{\alpha}{2t \, (1+\alpha t)} \, \norm{x_t - x}^2 - \frac{1}{2\, (1+\alpha t)} \, \norm{\nabla f(x_t)}^2 \\
        &\le - \frac{\alpha}{2t \, (1+\alpha t)} \, \norm{x_t - x}^2 - \frac{\alpha}{1+\alpha t} \, \{f(x_t) - f^\star\}
    \end{align*}
    which yields
    \begin{align*}
        \partial_t \{f_{t,x}(x_t) - f^\star\}
        &\le - \frac{\alpha}{1+\alpha t} \, \{f_{t,x}(x_t) - f^\star\}\,.
    \end{align*}
    Integrating this yields\footnote{Denote by ${(Q_t^{\rm HL})}_{t\ge 0}$ the Hopf--Lax semigroup defined by $Q_t^{\rm HL} f(x) = f_{t,x}(x_t)$. One can check that $Q_t^{\rm HL} f(x^\star) = f(x^\star)$ where $x^\star = \argmin f$. So, we can rewrite this inequality as $Q_t^{\rm HL} f(x) - Q_t^{\rm HL} f(x^\star) \leq \frac{1}{(1+\alpha t)} \,\{f(x) - f(x^\star)\}$.}
    \begin{align*}
        f_{\eta,x}(x_\eta) - f^\star
        &\le \{f(x) - f^\star\} \exp\Bigl(-\int_0^\eta \frac{\alpha}{1+\alpha t} \,\D t \Bigr)
        = \frac{1}{1+\alpha \eta} \, \{f(x) - f^\star\}\,.
    \end{align*}
    
    Hence,
    \begin{align*}
        \frac{1}{1+\alpha \eta} \, \{f(x) - f^\star\}
        &\ge f(x') - f^\star + \frac{1}{2\eta} \, \norm{x' - x}^2
        = f(x') - f^\star + \frac{\eta}{2} \, \norm{\nabla f(x')}^2 \\
        &\ge f(x') - f^\star + \alpha \eta \, \{f(x') - f^\star\}
        = (1+\alpha \eta) \, \{f(x') - f^\star\}\,.
    \end{align*}
    This completes the proof.
\end{proof}

\bibliographystyle{plainnat}
\bibliography{ref.bib}

\begin{thebibliography}{36}
\providecommand{\natexlab}[1]{#1}
\providecommand{\url}[1]{\texttt{#1}}
\expandafter\ifx\csname urlstyle\endcsname\relax
  \providecommand{\doi}[1]{doi: #1}\else
  \providecommand{\doi}{doi: \begingroup \urlstyle{rm}\Url}\fi

\bibitem[Ahn and Chewi(2021)]{ahnchewi2021mirrorlangevin}
Kwangjun Ahn and Sinho Chewi.
\newblock Efficient constrained sampling via the mirror-{L}angevin algorithm.
\newblock \emph{arXiv e-prints}, art. arXiv:2010.16212, 2021.

\bibitem[Ambrosio et~al.(2008)Ambrosio, Gigli, and
  Savar\'{e}]{ambrosio2008gradient}
Luigi Ambrosio, Nicola Gigli, and Giuseppe Savar\'{e}.
\newblock \emph{Gradient flows in metric spaces and in the space of probability
  measures}.
\newblock Lectures in Mathematics ETH Z\"{u}rich. Birkh\"{a}user Verlag, Basel,
  second edition, 2008.

\bibitem[Ambrosio et~al.(2021)Ambrosio, Semola, and
  Bru{\'e}]{ambrosio2021lectures}
Luigi Ambrosio, Daniele Semola, and Elia Bru{\'e}.
\newblock Lectures on optimal transport, 2021.

\bibitem[Attouch and Bolte(2009)]{attouch2009convergence}
Hedy Attouch and J{\'e}r{\^o}me Bolte.
\newblock On the convergence of the proximal algorithm for nonsmooth functions
  involving analytic features.
\newblock \emph{Mathematical Programming}, 116\penalty0 (1):\penalty0 5--16,
  2009.

\bibitem[Baccelli et~al.(1992)Baccelli, Cohen, Olsder, and
  Quadrat]{baccellietal1992synchronization}
Fran\c{c}ois~L. Baccelli, Guy Cohen, Geert~J. Olsder, and Jean-Pierre Quadrat.
\newblock \emph{Synchronization and linearity}.
\newblock Wiley Series in Probability and Mathematical Statistics: Probability
  and Mathematical Statistics. John Wiley \& Sons, Ltd., Chichester, 1992.
\newblock An algebra for discrete event systems.

\bibitem[Bernton(2018)]{bernton2018proximal}
Espen Bernton.
\newblock {L}angevin {M}onte {C}arlo and {JKO} splitting.
\newblock In \emph{Proceedings of the 31st Conference On Learning Theory},
  volume~75 of \emph{Proceedings of Machine Learning Research}, pages
  1777--1798, Stockholm, Sweden, 06--09 Jul 2018. PMLR.

\bibitem[Blanchet and Bolte(2018)]{blanchet2018family}
Adrien Blanchet and J{\'e}r{\^o}me Bolte.
\newblock A family of functional inequalities: {\L}ojasiewicz inequalities and
  displacement convex functions.
\newblock \emph{Journal of Functional Analysis}, 275\penalty0 (7):\penalty0
  1650--1673, 2018.

\bibitem[Boucheron et~al.(2013)Boucheron, Lugosi, and
  Massart]{boucheronetal2013concentration}
St\'{e}phane Boucheron, G\'{a}bor Lugosi, and Pascal Massart.
\newblock \emph{Concentration inequalities}.
\newblock Oxford University Press, Oxford, 2013.
\newblock A nonasymptotic theory of independence, With a foreword by Michel
  Ledoux.

\bibitem[Bubeck(2015)]{bubeck2015convex}
Sébastien Bubeck.
\newblock Convex optimization: algorithms and complexity.
\newblock \emph{Foundations and Trends® in Machine Learning}, 8\penalty0
  (3-4):\penalty0 231--357, 2015.

\bibitem[Chen et~al.(2020)Chen, Dwivedi, Wainwright, and Yu]{chenetal2020hmc}
Yuansi Chen, Raaz Dwivedi, Martin~J. Wainwright, and Bin Yu.
\newblock Fast mixing of {M}etropolized {H}amiltonian {M}onte {C}arlo: benefits
  of multi-step gradients.
\newblock \emph{J. Mach. Learn. Res.}, 21:\penalty0 Paper No. 92, 71, 2020.

\bibitem[Chewi et~al.(2021{\natexlab{a}})Chewi, Erdogdu, Li, Shen, and
  Zhang]{chewietal2021lmcpoincare}
Sinho Chewi, Murat~A. Erdogdu, Mufan~B. Li, Ruoqi Shen, and Matthew Zhang.
\newblock Analysis of {L}angevin {M}onte {C}arlo from {P}oincar\'e to
  log-{S}obolev.
\newblock \emph{arXiv e-prints}, art. arXiv:2112.12662, 2021{\natexlab{a}}.

\bibitem[Chewi et~al.(2021{\natexlab{b}})Chewi, Gerber, Lu, Gouic, and
  Rigollet]{chewietal2021logconcave1d}
Sinho Chewi, Patrik Gerber, Chen Lu, Thibaut~Le Gouic, and Philippe Rigollet.
\newblock The query complexity of sampling from strongly log-concave
  distributions in one dimension.
\newblock \emph{arXiv e-prints}, art. arXiv:2105.14163, 2021{\natexlab{b}}.

\bibitem[Ding and Li(2021)]{dingli2021coordinatelmc}
Zhiyan Ding and Qin Li.
\newblock Langevin {M}onte {C}arlo: random coordinate descent and variance
  reduction.
\newblock \emph{J. Mach. Learn. Res.}, 22:\penalty0 Paper No. 205, 51, 2021.

\bibitem[Durmus et~al.(2019)Durmus, Majewski, and
  Miasojedow]{durmusmajewski2019lmcconvex}
Alain Durmus, Szymon Majewski, and B\l{}a\.{z}ej Miasojedow.
\newblock Analysis of {L}angevin {M}onte {C}arlo via convex optimization.
\newblock \emph{J. Mach. Learn. Res.}, 20:\penalty0 Paper No. 73, 46, 2019.

\bibitem[Dwivedi et~al.(2019)Dwivedi, Chen, Wainwright, and Yu]{dwivedi2019log}
Raaz Dwivedi, Yuansi Chen, Martin~J. Wainwright, and Bin Yu.
\newblock Log-concave sampling: {M}etropolis-{H}astings algorithms are fast.
\newblock \emph{Journal of Machine Learning Research}, 20\penalty0
  (183):\penalty0 1--42, 2019.

\bibitem[Jordan et~al.(1998)Jordan, Kinderlehrer, and Otto]{JorKinOtt98}
Richard Jordan, David Kinderlehrer, and Felix Otto.
\newblock The variational formulation of the {F}okker--{P}lanck equation.
\newblock \emph{SIAM Journal on Mathematical Analysis}, 29\penalty0
  (1):\penalty0 1--17, 1998.

\bibitem[Karimi et~al.(2016)Karimi, Nutini, and Schmidt]{KNS16}
Hamed Karimi, Julie Nutini, and Mark Schmidt.
\newblock Linear convergence of gradient and proximal-gradient methods under
  the {P}olyak-{L}ojasiewicz condition.
\newblock In \emph{Joint European Conference on Machine Learning and Knowledge
  Discovery in Databases}, pages 795--811. Springer, 2016.

\bibitem[Lata\l{}a and Oleszkiewicz(2000)]{latalaoleszkiewicz2000loineq}
Rafa\l{} Lata\l{}a and Krzysztof Oleszkiewicz.
\newblock Between {S}obolev and {P}oincar\'{e}.
\newblock In \emph{Geometric aspects of functional analysis}, volume 1745 of
  \emph{Lecture Notes in Math.}, pages 147--168. Springer, Berlin, 2000.

\bibitem[Lee et~al.(2021{\natexlab{a}})Lee, Shen, and Tian]{LST21}
Yin~Tat Lee, Ruoqi Shen, and Kevin Tian.
\newblock Structured logconcave sampling with a restricted {G}aussian oracle.
\newblock In Mikhail Belkin and Samory Kpotufe, editors, \emph{Proceedings of
  Thirty Fourth Conference on Learning Theory}, volume 134 of \emph{Proceedings
  of Machine Learning Research}, pages 2993--3050. PMLR, 8 2021{\natexlab{a}}.

\bibitem[Lee et~al.(2021{\natexlab{b}})Lee, Shen, and Tian]{LST21arxiv}
Yin~Tat Lee, Ruoqi Shen, and Kevin Tian.
\newblock Structured logconcave sampling with a restricted {G}aussian oracle.
\newblock \emph{arXiv e-prints}, art. arXiv:2010.03106, 2021{\natexlab{b}}.

\bibitem[Liang and Chen(2021)]{LiaChe21}
Jiaming Liang and Yongxin Chen.
\newblock A proximal algorithm for sampling from non-smooth potentials.
\newblock \emph{arXiv preprint arXiv:2110.04597}, 2021.

\bibitem[Ma et~al.(2021)Ma, Chatterji, Cheng, Flammarion, Bartlett, and
  Jordan]{maetal2021nesterovmcmc}
Yi-An Ma, Niladri~S. Chatterji, Xiang Cheng, Nicolas Flammarion, Peter~L.
  Bartlett, and Michael~I. Jordan.
\newblock {Is there an analog of {N}esterov acceleration for gradient-based
  {MCMC}?}
\newblock \emph{Bernoulli}, 27\penalty0 (3):\penalty0 1942 -- 1992, 2021.

\bibitem[Martinet(1970)]{martinet1970breve}
Bernard Martinet.
\newblock Br{\`e}ve communication. r{\'e}gularisation d'in{\'e}quations
  variationnelles par approximations successives.
\newblock \emph{Revue fran{\c{c}}aise d'informatique et de recherche
  op{\'e}rationnelle. S{\'e}rie rouge}, 4\penalty0 (R3):\penalty0 154--158,
  1970.

\bibitem[Otto and Villani(2000)]{ottovillani2000lsi}
Felix Otto and C\'{e}dric Villani.
\newblock Generalization of an inequality by {T}alagrand and links with the
  logarithmic {S}obolev inequality.
\newblock \emph{J. Funct. Anal.}, 173\penalty0 (2):\penalty0 361--400, 2000.

\bibitem[Pereyra(2016)]{pereyra2016proximal}
Marcelo Pereyra.
\newblock Proximal {M}arkov chain {M}onte {C}arlo algorithms.
\newblock \emph{Stat. Comput.}, 26\penalty0 (4):\penalty0 745--760, 2016.

\bibitem[Peyr{\'e}(2015)]{peyre2015entropic}
Gabriel Peyr{\'e}.
\newblock Entropic approximation of {W}asserstein gradient flows.
\newblock \emph{SIAM Journal on Imaging Sciences}, 8\penalty0 (4):\penalty0
  2323--2351, 2015.

\bibitem[Robert and Casella(2004)]{robertcasella2004mcmc}
Christian~P. Robert and George Casella.
\newblock \emph{Monte {C}arlo statistical methods}.
\newblock Springer Texts in Statistics. Springer-Verlag, New York, second
  edition, 2004.

\bibitem[Rockafellar(1976)]{rockafellar1976monotone}
R~Tyrrell Rockafellar.
\newblock Monotone operators and the proximal point algorithm.
\newblock \emph{SIAM journal on control and optimization}, 14\penalty0
  (5):\penalty0 877--898, 1976.

\bibitem[Salim and Richtarik(2020)]{salimrichtarik2020proximallangevin}
Adil Salim and Peter Richtarik.
\newblock Primal dual interpretation of the proximal stochastic gradient
  {L}angevin algorithm.
\newblock In H.~Larochelle, M.~Ranzato, R.~Hadsell, M.~F. Balcan, and H.~Lin,
  editors, \emph{Advances in Neural Information Processing Systems}, volume~33,
  pages 3786--3796. Curran Associates, Inc., 2020.

\bibitem[S{\"a}rkk{\"a} and Solin(2019)]{SarSol19}
Simo S{\"a}rkk{\"a} and Arno Solin.
\newblock \emph{Applied stochastic differential equations}, volume~10.
\newblock Cambridge University Press, 2019.

\bibitem[Saumard and Wellner(2014)]{saumard2014logconcavity}
Adrien Saumard and Jon~A. Wellner.
\newblock Log-concavity and strong log-concavity: a review.
\newblock \emph{Stat. Surv.}, 8:\penalty0 45--114, 2014.

\bibitem[Vempala and Wibisono(2019)]{VW19}
Santosh Vempala and Andre Wibisono.
\newblock Rapid convergence of the unadjusted {L}angevin algorithm:
  {I}soperimetry suffices.
\newblock In \emph{Advances in Neural Information Processing Systems},
  volume~32. Curran Associates, Inc., 2019.

\bibitem[Villani(2003)]{villani2003topics}
C\'{e}dric Villani.
\newblock \emph{Topics in optimal transportation}, volume~58 of \emph{Graduate
  Studies in Mathematics}.
\newblock American Mathematical Society, Providence, RI, 2003.

\bibitem[Wibisono(2018)]{wibisono2018samplingoptimization}
Andre Wibisono.
\newblock Sampling as optimization in the space of measures: The {L}angevin
  dynamics as a composite optimization problem.
\newblock In Sébastien Bubeck, Vianney Perchet, and Philippe Rigollet,
  editors, \emph{Proceedings of the 31st Conference On Learning Theory},
  volume~75 of \emph{Proceedings of Machine Learning Research}, pages
  2093--3027. PMLR, 06--09 Jul 2018.

\bibitem[Wibisono(2019)]{wibisono2019proximal}
Andre Wibisono.
\newblock Proximal {L}angevin algorithm: {R}apid convergence under
  isoperimetry.
\newblock \emph{arXiv e-prints}, art. arXiv:1911.01469, 2019.

\bibitem[Zhang et~al.(2020)Zhang, Peyr\'e, Fadili, and
  Pereyra]{zhangetal2020mirror}
Kelvin~S. Zhang, Gabriel Peyr\'e, Jalal Fadili, and Marcelo Pereyra.
\newblock Wasserstein control of mirror {L}angevin {M}onte {C}arlo.
\newblock In Jacob Abernethy and Shivani Agarwal, editors, \emph{Proceedings of
  Thirty Third Conference on Learning Theory}, volume 125 of \emph{Proceedings
  of Machine Learning Research}, pages 3814--3841. PMLR, 09--12 Jul 2020.

\end{thebibliography}

\end{document}